\DeclareMathAlphabet{\mathpzc}{OT1}{pzc}{m}{it}
\DeclareSymbolFont{fouriersymbols}{FMS}{futm}{m}{n}
\DeclareSymbolFont{fourierlargesymbols}{FMX}{futm}{m}{n}
\DeclareMathDelimiter{\VERT}{\mathord}{fouriersymbols}{152}{fourierlargesymbols}{147}
\newcommand{\EO}[1]{\textcolor{black}{#1}}
\newtheorem{remark}[theorem]{Remark}
\numberwithin{equation}{section}
\newcommand{\Nin}{\,{\mbox{\,\raisebox{6.0pt} {\tiny$\circ$} \kern-11.1pt}\N }}
\newcommand{\Ninn}{{\mbox{\,\raisebox{4.5pt} {\tiny$\circ$} \kern-8.8pt}\N }}
\newcommand{\osc}{{\textup{\textsf{osc}}}}
\title{An adaptive finite element method for the sparse optimal control of fractional diffusion\thanks{EO is partially supported by CONICYT through FONDECYT project 11180193.}}
\author{Enrique Ot\'arola\thanks{Departamento de Matem\'atica, 
Universidad T\'ecnica Federico Santa Mar\'ia, Valpara\'iso, Chile. 
\texttt{enrique.otarola@usm.cl}.}}
\date{Draft version of \today.}
\begin{document}

\maketitle
\begin{abstract}
We propose and analyze an a posteriori error estimator for a PDE--constrained optimization problem involving a nondifferentiable cost functional, fractional diffusion, and control--constraints. We realize fractional diffusion as the Dirichlet-to-Neumann map for a nonuniformly PDE and propose an equivalent optimal control problem with a local state equation. For such an equivalent problem, we design an a posteriori error estimator which can be defined as the sum of four contributions: two contributions related to the approximation of the state and adjoint equations and two contributions that account for the discretization of the control variable and its associated subgradient. The contributions related to the discretization of the state and adjoint equations rely on anisotropic error estimators in weighted Sobolev spaces. We prove that the proposed a posteriori error estimator is locally efficient and, under suitable assumptions, reliable. We design an adaptive scheme that yields, for the examples that we perform, optimal experimental rates of convergence.
\end{abstract}

\begin{keywords}
PDE--constrained optimization, nonsmooth objectives, sparse controls, spectral fractional Laplacian, nonlocal operators, a posteriori error analysis, anisotropic estimates, adaptive loop.
\end{keywords}

\begin{AMS}
35R11,    %%   Fractional partial differential equations
35J70,    %%   Degenerate elliptic equations
49J20,    %%   Optimal control problems involving partial differential equations
49M25,    %%   Discrete approximations
65N12,    %%   Stability and convergence of numerical methods
65N30,    %%   Finite elements, Rayleigh-Ritz and Galerkin methods, finite methods;
65N50.    %%   Mesh generation and refinement
\end{AMS}

%%%%%%%%%%%%%%%%%%%%%%%%%%%%%%%%%%%%%%%%%%%%%%%%%%%%%%%%%%%%%%%%%%%%%%%%%%%%%%%%%%%%%%
\section{Introduction}
\label{sec:introduccion}
%%%%%%%%%%%%%%%%%%%%%%%%%%%%%%%%%%%%%%%%%%%%%%%%%%%%%%%%%%%%%%%%%%%%%%%%%%%%%%%%%%%%%%
The main goal of this work is the design and study of a posteriori error estimates for an optimal control problem that entails the minimization of a nondifferentiable cost functional, a state equation that involves the spectral fractional powers of the Dirichlet Laplace operator, and constraints on the control variable. To be precise, let $\Omega \subset \R^n$ ($n \geq 1$) be an open and bounded polytopal domain with Lipschitz boundary $\partial \Omega$, $s \in (0,1)$, and $\usf_d: \Omega \rightarrow \mathbb{R}$ be a desired state. For parameters $\sigma>0$ and $\nu >0$, we define the nonsmooth cost functional
\begin{equation}
\label{def:J}
J(\usf,\zsf) := \frac{1}{2}\| \usf - \usf_{d} \|^2_{L^2(\Omega)} + 
\frac{\sigma}{2} \| \zsf\|^2_{L^2(\Omega)} + \nu \| \zsf \|_{L^1(\Omega)}.
\end{equation}
We will be interested in the numerical approximation of the following nondifferentiable PDE--constrained optimization problem: Find
\begin{equation}
\label{def:minJ}
 \text{min }J(\usf,\zsf),
\end{equation}
subject to the \emph{nonlocal state equation}
\begin{equation}
\label{eq:fractional}
\Laps \usf = \zsf  \text{ in } \Omega, 
%\qquad \usf = 0   \text{ on } \partial \Omega, \\
\end{equation}
and the \emph{control constraints}
\begin{equation}
 \label{eq:control_constraints}
\asf \leq \zsf(x') \leq \bsf \quad\textrm{a.e.~~} x' \in \Omega . 
\end{equation}
For $s \in (0,1)$, the operator $\Laps$ denotes the \emph{spectral} fractional powers of the Dirichlet Laplace operator; the so--called \emph{spectral fractional Laplacian}. We must immediately comment that this definition and the classical one that is based on a pointwise integral formula \cite{Landkof,Silvestre:2007,Stein} do not coincide; their difference is positive and positivity preserving \cite{MR3246044}. In addition, we also comment that, since we are interested in the nonsmooth scenario, we will assume, in \eqref{eq:control_constraints}, that the control bounds $\asf, \bsf \in \mathbb{R}$ satisfy that $\asf < 0 < \bsf$. We refer the reader to \cite[Remark 2.1]{CHW:12} for a discussion.
 
The efficient approximation of problems involving the spectral fractional Laplacian carries two essential difficulties. The first, and most important, is that $(-\Delta)^s$ is a nonlocal operator \cite{CT:10,CS:07,CDDS:11}. The second feature is the lack of boundary regularity \cite{MR3489634}, which leads to reduced convergence rates \cite{BMNOSS:17,NOS}. In fact, as \cite[Theorem 1.3]{MR3489634} shows, if $\partial \Omega$ is sufficiently smooth, then the solution $\usf$ to problem \eqref{eq:fractional} behaves like
\begin{equation}
\begin{aligned}
\label{eq:u_singular_at_the_boundary}
 \usf(x') &\approx \textrm{dist}(x',\partial \Omega)^{2s} + \vsf(x'), \quad &s \in (0,\tfrac{1}{2}),
 \\
 \usf(x') & \approx \textrm{dist}(x',\partial \Omega)+ \vsf(x'), \quad  &s \in (\tfrac{1}{2},1),
 \end{aligned}
\end{equation}
where $\textrm{dist}(x',\partial \Omega)$ denotes the distance from $x'$ to $\partial \Omega$ and $\vsf$ is a smooth function. The case $s = \tfrac{1}{2}$ is exceptional:
\begin{equation}
\label{eq:u_singular_at_the_boundary_1/2}
\usf(x') \approx \textrm{dist}(x',\partial \Omega)|\log(\textrm{dist}(x',\partial \Omega))|+ \vsf(x'), 
\end{equation}
where $\vsf$ is, again, a smooth function; see \cite{MR1204855} for $\Omega \subset \mathbb{R}^2$ and $\partial \Omega$ smooth. 

The aforementioned nonlocality difficulty can be overcame with the localization results by Caffarelli and Silvestre \cite{CS:07}. When $\Omega = \mathbb{R}^n$, the authors of \cite{CS:07} proved that any power of the fractional Laplacian can be realized as the Dirichlet-to-Neumann map for an extension problem posed in the upper half--space $\R^{n+1}_+$. A similar extension property is valid for the spectral fractional Laplacian in a bounded domain $\Omega$ \cite{CT:10,CDDS:11}. The latter extension involves a local but nonuniformly elliptic PDE formulated in the semi--infinite cylinder $\C = \Omega \times (0,\infty)$:
\begin{equation}
\label{eq:alpha_harm}
  \DIV\left( y^\alpha \nabla \ue \right) = 0 \text{ in } \C, \quad \ue = 0 \text{ on } \partial_L \C, \quad
  \partial_{\nu^\alpha} \ue = d_s \zsf \text{ on } \Omega \times \{0\},
\end{equation}
where $\partial_L \C= \partial \Omega \times [0,\infty)$ corresponds to the lateral boundary of $\C$ and $d_s = 2^{\alpha}\Gamma(1-s)/\Gamma(s)$. The parameter $\alpha$ is defined as $\alpha = 1-2s \in (-1,1)$ and the conormal exterior derivative of $\ue$ at $\Omega \times \{ 0 \}$ is
\begin{equation}
\label{def:lf}
\partial_{\nu^\alpha} \ue = -\lim_{y \rightarrow 0^+} y^\alpha \ue_y;
\end{equation}  
the limit is understood in the distributional sense.
We shall refer to $y$ as the \emph{extended variable} and to the dimension $n+1$, in $\R_+^{n+1}$, the \emph{extended dimension} of problem \eqref{eq:alpha_harm}. With the extension $\ue$ at hand, we thus introduce the fundamental result by Caffarelli and Silvestre \cite{CT:10,CS:07,CDDS:11}: the Dirichlet-to-Neumann map of problem \eqref{eq:alpha_harm} and the spectral fractional Laplacian are related by 
$
  d_s \Laps \usf = \partial_{\nu^\alpha} \ue \textrm{ in } \Omega.
$

The use of the extension problem \eqref{eq:alpha_harm} for the discretization of the spectral fractional Laplacian was first used in \cite{NOS}. 
%In such a work the authors introduce the following solution scheme: Given a datum $\zsf$, solve, on the basis of a finite element discretization, the extension problem \eqref{eq:alpha_harm} and obtain $V$, thus set $U = \tr V$ and arrive at an approximation of the solution $\usf$ of problem \eqref{eq:fractional}. 
The main advantage of the scheme proposed in \cite{NOS} is that it involves the resolution of the local problem \eqref{eq:alpha_harm} and thus its implementation uses basic ingredients of finite element analysis; its analysis, however, involves asymptotic estimates of Bessel functions \cite{Abra}, to derive regularity estimates in weighted Sobolev spaces, elements of harmonic analysis \cite{Javier,Muckenhoupt}, and an anisotropic polynomial interpolation theory in weighted Sobolev spaces \cite{DL:05, NOS2}. Such an interpolation theory allows for tensor product elements that exhibit an anisotropic feature in the extended dimension, 
%a large aspect ratio in $y$ (anisotropy), 
that is in turn needed to compensate the singular behavior of the solution $\ue$ in the extended variable $y$ \cite[Theorem 2.7]{NOS}, \cite[Theorem 4.7]{BMNOSS:17}.

PDE--constrained optimization problems for fractional diffusion have been considered in a number of works \cite{MR3429730,MR3850351,DGO,GO,MR3702421}. Recently, the authors of \cite{OS:Sparse} have provided an a priori error analysis for the sparse optimal control problem \eqref{def:minJ}--\eqref{eq:control_constraints}. This was mainly motivated by the following considerations:
\begin{enumerate}[$\bullet$]
 \item Practitioners claim that the fractional Laplacian seems to better describe many processes. A rather incomplete list of problems where fractional diffusion appears includes finance \cite{MR2064019,PH:97}, turbulent flow \cite{chen23}, quasi--geostrophic flows models \cite{CV:10}, models of anomalous thermoviscous behaviors \cite{doi:10.1121/1.1646399}, biophysics \cite{bio}, nonlocal electrostatics \cite{ICH}, image processing \cite{GH:14}, peridynamics \cite{MR3023366,MR3618711}, and many others.  It is then only natural that interest in efficient approximation schemes for these problems arises and that one might be interested in their control.
 \item The cost functional $J$ involves an $L^1(\Omega)$--control cost term that leads to sparsely supported optimal controls \cite{CHW:12,MR2556849,MR2826983}; a desirable feature, for instance, in the optimal placement of discrete actuators \cite{MR2556849}. 
\end{enumerate}

In \cite{OS:Sparse}, the authors first consider an equivalent optimal control problem that involves the local elliptic PDE \eqref{eq:alpha_harm} as state equation. Second, since \eqref{eq:alpha_harm} is posed on the semi--infinite cylinder $\C = \Omega \times (0,\infty)$, they propose a truncated optimal control problem on $\C_{\Y} =  \Omega \times (0,\Y)$ and derive an exponential error estimate with respect to the truncation parameter $\Y$. Then, they propose a scheme to approximate the truncated optimal control problem: the first--degree finite element approximation of \cite{NOS} for the state and adjoint equations and piecewise constant approximation for the optimal control variable. The derived a priori error estimate reads as follows: Given $\usf_d \in \Ws$ and $\asf, \bsf \in \mathbb{R}$ such that $\asf < 0 < \bsf$, if $\Omega$ is convex, then
\begin{equation}
\label{eq:a_priori}
 \| \bar \zsf -\bar Z \|_{L^2(\Omega)} \lesssim |\log N|^{2s}N^{-\frac{1}{n+1}},
\end{equation}
where $\bar Z$ corresponds to the optimal control variable of the scheme of \cite[Section 6]{OS:Sparse} and $N$ denotes the total number of degrees of freedom of the underlying mesh. The adaptive finite element method (AFEM) that we propose in our work is thus motivated, in addition to the search of a numerical scheme that efficiently solves \eqref{def:minJ}--\eqref{eq:control_constraints} with relatively modest computational resources, by the following considerations:
\begin{enumerate}[$\bullet$]
 \item The a priori error estimate \eqref{eq:a_priori} requires the convexity of the domain $\Omega$ and compatibility conditions on the desired state $\usf_d$ that are expressed as $\usf_d \in \Ws$: \emph{it is required that $\usf_d$ vanishes on $\partial \Omega$ for $s \in (0,\frac{1}{2}]$}. If these conditions do not hold then, the a priori error estimate \eqref{eq:a_priori} is not longer valid. In particular, the violation of the latter condition implies that the adjoint state $\bar \psf$ will behave as \eqref{eq:u_singular_at_the_boundary}--\eqref{eq:u_singular_at_the_boundary_1/2}. On the other hand, if the first condition (the convexity of the domain) is violated, then both the state and adjoint state variables may exhibit singularities in the direction of the $x'$ variables. An efficient technique to solve \eqref{def:minJ}--\eqref{eq:control_constraints} must thus resolve both of the aforementioned approximation issues.
 
 \item The sparsity term $\| \usf \|_{L^1(\Omega)}$, in the cost functional \eqref{def:minJ}, yield an optimal control that is non--zero only in sets of small support in $\Omega$  \cite{CHW:12,MR2556849,MR2826983}. It is then natural, to efficiently resolve such a behavior, the consideration of AFEMs.
\end{enumerate}

We organize our exposition as follows: In Section \ref{sec:Prelim} we recall the definition of the spectral fractional Laplacian, present the fundamental result by Caffarelli and Silvestre \cite{CS:07}, and recall elements from convex analysis. In Section \ref{sec:apriori_control} we recall the numerical scheme proposed in \cite{OS:Sparse} and review its a priori error analysis. Section \ref{subsec:ideal_a_posteriori}, that is a highlight of our contribution, is dedicated to the design and analysis of an ideal a posteriori error estimator for \eqref{def:minJ}--\eqref{eq:control_constraints} that is equivalent to the error. Since, the aforementioned estimator is not computable, we propose, in Section \ref{subsec:computable_a_posteriori} a computable one and show that is equivalent, under suitable assumptions, to the error up to oscillation terms. Finally, in Section \ref{sec:numerics} we design an AFEM, comment on some implementation details pertinent to the problem, and present numerical experiments that yield optimal experimental rates of convergence.
%%%%%%%%%%%%%%%%%%%%%%%%%%%%%%%%%%%%%%%%%%%%%%%%%%%%%%%%%%%%%%%%%%%%%%%%%%%%%%%%%%%%%%%%%%%%%%%%%%%%%%%%%%%%%%%%%%%%%%%%%%%%%%%%%%%%%%%%%%%%%%%%%%%%%%%%%%%%%%%%%%%%%%%%%%%%%
\section{Notation and preliminaries}
\label{sec:Prelim}
%%%%%%%%%%%%%%%%%%%%%%%%%%%%%%%%%%%%%%%%%%%%%%%%%%%%%%%%%%%%%%%%%%%%%%%%%%%%%%%%%%%%%%%%%%%%%%%%%%%%%%%%%%%%%%%%%%%%%%%%%%%%%%%%%%%%%%%%%%%%%%%%%%%%%%%%%%%%%%%%%%%%%%%%%%%%%
We adopt the notation of \cite{NOS,MR3259962}. Besides the semi--infinite cylinder $\C= \Omega \times (0,\infty)$, we introduce, for $\Y > 0$, the truncated cylinder $\C_{\Y}= \Omega \times (0,\Y)$
and its lateral boundary $\partial_L\C_\Y = \partial \Omega \times (0,\Y)$. 

For $ x\in \R_+^{n+1}$, we write 
$
x = (x',y)
$
with  $x' \in \R^n$ and $y \in (0,\infty)$. 

The parameter $\alpha \in (-1,1)$ and the power $s$ of the spectral fractional Laplacian $\Laps$ are related by the formula $\alpha = 1 -2s$.

The relation $a \lesssim b$ indicates that $a \leq Cb$, with a constant $C$ which is independent of $a$ and $b$ and the size of the elements in the mesh. The value of the constant $C$ might change at each occurrence.

%%%%%%%%%%%%%%%%%%%%%%%%%%%%%%%%%%%%%%%%%%%%%%%%%%%%%%%%%%%%%%%%%%%%%%%%%%%%%%%%%%%%%%%%%%%%%%%%%%%%%%%%%%%%%%%%%%%%%%%%%%%%%%%%%%%%%%%%%%%%%%%%%%%%%%%%%%%%%%%%%%%%%%%%%%%%%
\subsection{The fractional Laplace operator}
\label{subsec:fractional_Laplacian}
%%%%%%%%%%%%%%%%%%%%%%%%%%%%%%%%%%%%%%%%%%%%%%%%%%%%%%%%%%%%%%%%%%%%%%%%%%%%%%%%%%%%%%%%%%%%%%%%%%%%%%%%%%%%%%%%%%%%%%%%%%%%%%%%%%%%%%%%%%%%%%%%%%%%%%%%%%%%%%%%%%%%%%%%%%%%%
%To define $\Laps$ we invoke spectral theory \cite{BS}. 
Since $-\Delta: \mathcal{D}(-\Delta) \subset L^2(\Omega) \rightarrow L^2(\Omega)$ is an unbounded, positive, and closed operator with dense domain $\mathcal{D}(-\Delta) = H^2(\Omega) \cap H_0^1(\Omega)$ and its inverse is compact, the eigenvalue problem: 
%Find $(\lambda,\varphi) \in \mathbb{R} \times H_0^1(\Omega) \setminus \{ 0 \}$ such that
\[
(\lambda,\varphi) \in \mathbb{R} \times H_0^1(\Omega) \setminus \{ 0 \}:
\quad
( \nabla \varphi, \nabla v)_{L^2(\Omega)} = \lambda (\varphi,v)_{L^2(\Omega)} \quad \forall v \in H_0^1(\Omega)
\]
has a countable collection of eigenpairs $\{ \lambda_k, \varphi_k \}_{k \in \mathbb{N}} \subset \mathbb{R}^+ \times H_0^1(\Omega)$, with real eigenvalues enumerated in increasing order, counting multiplicities. In addition, $\{ \varphi_k \}_{k \in \mathbb{N}}$ is an orthogonal basis of $H_0^1(\Omega)$ and an orthonormal basis of $L^2(\Omega)$ \cite{BS}. With these eigenpairs at hand, we define, for $s \geq 0$, the fractional Sobolev space
\begin{equation*}
\label{def:Hs}
  \Hs = \left\{ w = \sum_{k=1}^\infty w_k \varphi_k: \| w \|_{\Hs}^2 =
  \sum_{k=1}^{\infty} \lambda_k^s w_k^2 < \infty \right\},
\end{equation*}
\EO{where, for  $k \in \mathbb{N}$, $w_k = ( w , \varphi_k)_{L^2(\Omega)}$.} We denote by $\Hsd$ the dual space of $\Hs$. The duality pairing between $\Hs$ and $\Hsd$ will be denoted by $\langle \cdot, \cdot \rangle$. 

We define, for $s \in (0,1)$, the \emph{spectral} fractional Laplacian $\Laps$ as 
\begin{equation*}
\Laps: \Hs \rightarrow \Hsd,
\quad
  (-\Delta)^s w  := \sum_{k=1}^\infty \lambda_k^{s} w_k \varphi_k, \quad w_k = ( w , \varphi_k)_{L^2(\Omega)}, \quad k \in \mathbb{N}.
\end{equation*} 

\subsection{An extension property}
\label{subsec:extension_property}

The operator in \eqref{eq:alpha_harm} is in divergence form and thus amenable to variational techniques. However, since the weight $y^{\alpha}$ either blows up, for $-1<\alpha<0$, or degenerates, for $0<\alpha<1$, as $y \downarrow 0$, such a local operator is nonuniformly elliptic; the case $\alpha = 0$ is exceptional and corresponds to the regular harmonic extension \cite{CT:10}. This entails dealing with Lebesgue and Sobolev spaces with the weight $y^{\alpha}$ for $\alpha \in (-1,1)$ \cite{CS:07,CDDS:11}. 

Let $D \subset \R^{n+1}$ be open. We define $L^2(|y|^{\alpha},D)$ as the Lebesgue space for the measure $|y|^{\alpha} \diff x$. We also define the weighted Sobolev space
$
 H^1(|y|^{\alpha},D) := \{ w \in L^2(|y|^{\alpha},D): | \nabla w | \in L^2(|y|^{\alpha},D) \},
$
and the norm
\begin{equation}
\label{wH1norm}
\| w \|_{H^1(|y|^{\alpha},D)} =
\left(  \| w \|^2_{L^2(|y|^{\alpha},D)} + \| \nabla w \|^2_{L^2(|y|^{\alpha},D)} \right)^{\frac{1}{2}}.
\end{equation}
Since $\alpha = 1-2s \in (-1,1)$, $|y|^\alpha$ belongs to Muckenhoupt class $A_2(\R^{n+1})$ \cite{Javier,FKS:82,GU,Muckenhoupt,Turesson}. This, in particular, implies that $H^1(|y|^{\alpha},D)$ is Hilbert and that $C^{\infty}(D) \cap H^1(|y|^{\alpha},D)$ is dense in $H^1(|y|^{\alpha},D)$ (cf.~\cite[Proposition 2.1.2, Corollary 2.1.6]{Turesson} and \cite[Theorem~1]{GU}).

To seek for a weak solution to problem \eqref{eq:alpha_harm}, we introduce the weighted space
\begin{equation*}
  \label{HL10}
  \HL(y^{\alpha},\C) := \left\{ w \in H^1(y^\alpha,\C): w = 0 \textrm{ on } \partial_L \C\right\},
\end{equation*}
and notice that the following \emph{weighted Poincar\'e inequality} holds \cite[ineq. (2.21)]{NOS}:
\begin{equation*}
\label{Poincare_ineq}
\| w \|_{L^2(y^{\alpha},\C)} \lesssim \| \nabla w \|_{L^2(y^{\alpha},\C)}
\quad \forall w \in \HL(y^{\alpha},\C).
\end{equation*}
This implies that the seminorm on $\HL(y^{\alpha},\C)$ is equivalent to \eqref{wH1norm}. For $w \in H^1(y^{\alpha},\C)$, $\tr w$ denotes its trace onto $\Omega \times \{ 0 \}$. We recall (\cite[Prop.~2.5]{NOS} and \cite[Prop.~2.1]{CDDS:11})
\begin{equation}
\label{Trace_estimate}
\tr \HL(y^{\alpha},\C) = \Hs,
\qquad
  \|\tr w\|_{\Hs} \leq C_{\tr} \| w \|_{\HLn(y^{\alpha},\C)},
  \quad
 C_{\tr} > 0.
\end{equation}
We mention that $C_{\tr} \leq d_s^{-\frac{1}{2}}$ with $d_s = 2^{\alpha}\Gamma(1-s)/\Gamma(s)$ \cite[Section 2.3]{CNOS2}. This property will be of importance in the a posteriori error analysis that we will perform.

Define the bilinear form
\begin{equation}
 \label{eq:a}
 a: \HL(y^{\alpha},\C) \times \HL(y^{\alpha},\C) \rightarrow \mathbb{R}, \quad a(w,\phi):= \frac{1}{d_s} \int_{\C} y^{\alpha} \nabla w \cdot \nabla \phi \diff x.
\end{equation}
The weak formulation of problem \eqref{eq:alpha_harm} thus reads: Find $\ue \in \HL(y^{\alpha},\C)$ such that
\begin{equation}
 \label{eq:weak_alpha_harm}
 a(\ue,\phi) = \langle \zsf, \phi \rangle \quad \forall \phi \in \HL(y^{\alpha},\C).
\end{equation}

We conclude this section with the fundamental result by Caffarelli and Silvestre \cite{CT:10,CS:07,CDDS:11}: Let $\usf$ solve \eqref{eq:fractional}. If $\ue \in \HL(y^{\alpha},\C)$ solves \eqref{eq:weak_alpha_harm}, then $\usf = \tr \ue$ and
\[
 d_s \Laps \usf = \partial_{\nu^{\alpha}} \ue \textrm{ in } \Omega.
\]

\subsection{Convex functions and subdifferentials}
\label{subsec:subdifferentials}
Let $S$ be a real and normed vector space. Let $\chi: S \rightarrow \R \cup \{\infty\}$ be a convex and proper function and let $v \in S$ be such that $\chi(v) < \infty$. A \emph{subgradient} of $\chi$ at $v$ is an element $v^\star \in S^{\star}$ that satisfies
\begin{equation}
 \label{eq:subgradient}
\langle v^\star,w - v \rangle_{S^\star,S} \leq \chi(w) - \chi(v) \quad \forall w \in S,
\end{equation}
where $\langle \cdot, \cdot \rangle_{S^\star,S}$ denotes the duality pairing between $S^{\star}$ and $S$. We denote by $\partial \chi (v)$ the set of all subgradients of $\chi$ at $v$: the so--called  \emph{subdifferential} of $\chi$ at $v$. \EO{Since $\chi$ is convex,} $\partial \chi(v) \neq \emptyset$ for all points $v$ in the interior of the effective domain of $\chi$. We conclude with the following property: the subdifferential is monotone, \ie
\begin{equation}
\label{eq:subdiff_monotone}
  \langle v^\star - w^\star, v - w \rangle_{S^\star,S} \geq 0 \quad \forall v^\star \in \partial \chi(v),\ \forall w^\star \in \partial\chi(w).
\end{equation}
The reader is referred to \cite{MR1058436,MR2330778} for a detailed treatment on convex analysis.

%%%%%%%%%%%%%%%%%%%%%%%%%%%%%%%%%%%%%%%%%%%%%%%%%%%%%%%%%%%%%%%%%%%%%%%%%%%%%%%%%%%%%%%%%%%%%%%%%%%%%%%%%%%%%%%%%%%%%%%%%%%%%%%%%%%%%%%%%%%%%%%%%%%%%%%%%%%%%%%%%%%%%%%%%%
\section{A priori error estimates}
\label{sec:apriori_control}
%%%%%%%%%%%%%%%%%%%%%%%%%%%%%%%%%%%%%%%%%%%%%%%%%%%%%%%%%%%%%%%%%%%%%%%%%%%%%%%%%%%%%%%%%%%%%%%%%%%%%%%%%%%%%%%%%%%%%%%%%%%%%%%%%%%%%%%%%%%%%%%%%%%%%%%%%%%%%%%%%%%%%%%%%%

In this section we briefly review the a priori error analysis for the fully discrete approximation of the optimal control problem \eqref{def:minJ}--\eqref{eq:control_constraints} proposed and 
investigated in \cite{OS:Sparse}. We also make clear the limitations of such a priori error analysis, thereby justifying the quest for an a posteriori error analysis.

%%%%%%%%%%%%%%%%%%%%%%%%%%%%%%%%%%%%%%%%%%%%%%%%%%%%%%%%%%%%%%%%%%%%%%%%%%%%%%%%%%%%%%%%%%%%%%%%%%%%%%%%%%%%%%%%%%%%%%%%%%%%%%%%%%%%%%%%%%%%%%%%%%%%%%%%%%%%%%%%%%%%%%%%%%
\subsection{The extended optimal control problem}
\label{subsec:extended}
%%%%%%%%%%%%%%%%%%%%%%%%%%%%%%%%%%%%%%%%%%%%%%%%%%%%%%%%%%%%%%%%%%%%%%%%%%%%%%%%%%%%%%%%%%%%%%%%%%%%%%%%%%%%%%%%%%%%%%%%%%%%%%%%%%%%%%%%%%%%%%%%%%%%%%%%%%%%%%%%%%%%%%%%%%

We will consider a solution technique for \eqref{def:minJ}--\eqref{eq:control_constraints} that relies on the discretization of an equivalent problem: the \emph{extended optimal control problem}, which has as a main advantage its local nature. To present it, we first define the set of \emph{admissible controls}:
\begin{equation}
 \label{def:Zad}
 \Zad= \{ \wsf \in L^2(\Omega): \asf \leq \wsf(x') \leq \bsf \textrm{~~a.e~~}  x' \in \Omega \},
\end{equation}
where $\asf$ and $\bsf$ are real and satisfy the property $\asf < 0 <  \bsf$; see\cite[Remark 2.1]{CHW:12}. The extended optimal control problem reads as follows: Find
\begin{equation}
 \text{min} \{ J(\tr \ue,\zsf): \ue \in \HL(y^{\alpha},\C), \zsf \in \Zad \}
\end{equation}
subject to the \emph{linear} and nonuniformly elliptic state equation
\begin{equation}
\label{eq:alpha_harm_weak}
a(\ue,\phi) = \langle \zsf, \tr \phi \rangle \quad \forall \phi \in \HL(y^{\alpha},\C).
\end{equation}
We recall that $J$ is defined as in \eqref{def:minJ}, with $\sigma ,\nu >0$ and $\usf_d \in L^2(\Omega)$. This problem admits a unique optimal pair $(\oue,\ozsf) \in \HL(y^{\alpha},\C) \times \Zad$. More importantly, it is equivalent to the optimal control problem \eqref{def:minJ}--\eqref{eq:control_constraints}: $\tr \oue = \ousf$.

%%%%%%%%%%%%%%%%%%%%%%%%%%%%%%%%%%%%%%%%%%%%%%%%%%%%%%%%%%%%%%%%%%%%%%%%%%%%%%%%%%%%%%%%%%%%%%%%%%%%%%%%%%%%%%%%%%%%%%%%%%%%%%%%%%%%%%%%%%%%%%%%%%%%%%%%%%%%%%%%%%%%%%%%%%
\subsection{The truncated optimal control problem}
\label{subsec:truncated}
%%%%%%%%%%%%%%%%%%%%%%%%%%%%%%%%%%%%%%%%%%%%%%%%%%%%%%%%%%%%%%%%%%%%%%%%%%%%%%%%%%%%%%%%%%%%%%%%%%%%%%%%%%%%%%%%%%%%%%%%%%%%%%%%%%%%%%%%%%%%%%%%%%%%%%%%%%%%%%%%%%%%%%%%%%

The extended optimal control problem involves the state equation \eqref{eq:alpha_harm_weak}, which is posed on $\C = \Omega \times (0,\infty)$. Consequently, it cannot be directly approximated with standard finite element techniques. However, in view of the exponential decayment of the optimal state in the extended variable \cite[Proposition 3.1]{NOS}, it is suitable to propose the following \emph{truncated optimal control problem}. Find
\[
 \text{min} \{ J(\tr v,\rsf): v \in \HL(y^{\alpha},\C_\Y), \rsf \in \Zad \}
\]
subject to the \emph{truncated} state equation
\begin{equation}
\label{eq:alpha_harm_truncated}
  a_\Y(v,\phi) = \langle \rsf, \tr \phi \rangle
    \quad \forall \phi \in \HL(y^{\alpha},\C_\Y).
\end{equation}
Here,
$
  \HL(y^{\alpha},\C_\Y) = \left\{ w \in H^1(y^\alpha,\C_\Y): w = 0 \text{ on }
    \partial_L \C_\Y \cup \Omega \times \{ \Y\} \right\},
$
and
\begin{equation}
\label{def:a_Y}
a_\Y: \HL(y^{\alpha},\C_\Y) \times \HL(y^{\alpha},\C_\Y) \rightarrow \mathbb{R},
\quad
a_\Y(w,\phi) = \frac{1}{d_s} \int_{\C_\Y} y^{\alpha}  \nabla w \cdot \nabla \phi \diff x.
\end{equation}
This problem admits a unique optimal solution $(\bar v, \orsf) \in \HL(y^{\alpha},\C_{\Y}) \times \Zad$. In addition, such a pair is optimal if and only if $\bar v$ solves \eqref{eq:alpha_harm_truncated} and 
\begin{equation}
\label{eq:VI}
 (\tr \bar{p} + \sigma \orsf + \nu \bar t, \rsf - \orsf )_{L^2(\Omega)} \geq 0 \quad \forall \rsf \in \Zad,
\end{equation}
where $\bar t \in \partial \psi (\orsf)$ and $\bar{p} \in \HL(y^{\alpha},\C_\Y)$ solves the truncated adjoint problem
\begin{equation}
\label{eq:p_truncated}
a_\Y(\phi,\bar{p}) = (  \tr \bar{v} - \usfd, \tr \phi )_{L^2(\Omega)} \quad \forall \phi \in \HL(y^{\alpha},\C_\Y).  
\end{equation}
The convex and Lipschitz function $\psi$ is defined as follows:
\begin{equation}
 \label{eq:psi}
 \psi: L^1(\Omega) \rightarrow \mathbb{R}, \quad \psi( \rsf ):= \int_{\Omega} |\rsf(x')| \diff x'.
\end{equation}

The following approximation result shows how $(\bar v, \orsf)$ approximates $(\bar \ue, \ozsf)$
\begin{proposition}[exponential convergence]
Let $(\oue,\ozsf) \in \HL(y^{\alpha},\C) \times \Zad$ and $(\bar v, \orsf) \in \HL(y^{\alpha},\C_{\Y}) \times \Zad$ be the solutions to the extended and truncated optimal control problems, respectively. Then,
\begin{align*}
% \label{eq:v-v^Y}
  \| \bar \zsf - \bar \rsf \|_{L^2(\Omega)} & \lesssim e^{-\sqrt{\lambda_1} \Y/4} \left(\| \bar{\rsf} \|_{L^2(\Omega)} + \| \usf_d \|_{L^2(\Omega)} \right),\\
% \label{eq:trv-v^Y}
  \| \nabla \left( \bar \ue  - \bar{v}  \right) \|_{L^2(y^{\alpha},\C)} & \lesssim e^{-\sqrt{\lambda_1} \Y/4} \left(\| \bar{\rsf} \|_{L^2(\Omega)} + \| \usf_d \|_{L^2(\Omega)} \right),
\end{align*}
where $\lambda_1$ denotes the first eigenvalue of $-\Delta$.
\end{proposition}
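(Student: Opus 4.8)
The plan is to derive both bounds from a single stability estimate for the control, exactly as one does for the a priori analysis of optimal control problems, combined with the known exponential decay of the extension. First I would write variational inequalities for both problems: \eqref{eq:VI} for the truncated problem, and the analogous one for the extended problem, namely $(\tr \bar p_\infty + \sigma \bar\rsf_\infty + \nu \bar t_\infty, \rsf - \bar\rsf_\infty)_{L^2(\Omega)} \ge 0$ for all $\rsf \in \Zad$, where here I abuse notation slightly and write $(\bar\rsf_\infty,\bar p_\infty)$ for $(\ozsf,\bar{\mathsf p})$, the extended optimal control and adjoint state. Testing each inequality with the other's control and adding, the first--order terms $\sigma \|\bar\rsf_\infty - \orsf\|_{L^2(\Omega)}^2$ survive with a favourable sign, the subgradient cross terms combine into something nonnegative by monotonicity of $\partial\psi$ (property \eqref{eq:subdiff_monotone}), and what is left on the other side is $(\tr \bar p_\infty - \tr \bar p, \orsf - \bar\rsf_\infty)_{L^2(\Omega)}$. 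Hence
\begin{equation*}
\sigma \| \ozsf - \orsf \|_{L^2(\Omega)}^2 \le ( \tr \bar p_\infty - \tr \bar p, \orsf - \ozsf )_{L^2(\Omega)},
\end{equation*}
so everything reduces to estimating the difference of the two adjoint states at the trace level.

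The second step is to control $\tr \bar p_\infty - \tr \bar p$ in terms of $\|\ozsf - \orsf\|_{L^2(\Omega)}$ plus a purely truncation--induced term. I would introduce two auxiliary states: the extension $w$ driven by the truncated control $\orsf$ on the full cylinder $\C$, and its adjoint $q$; then split $\bar p_\infty - \bar p$ through $q$. The difference $\bar p_\infty - q$ is the adjoint associated with the state difference driven by $\ozsf - \orsf$, so by the trace inequality \eqref{Trace_estimate} and well--posedness of \eqref{eq:weak_alpha_harm} one gets $\|\tr(\bar p_\infty - q)\|_{L^2(\Omega)} \lesssim \|\ozsf - \orsf\|_{L^2(\Omega)}$. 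The remaining piece $q - \bar p$ is a genuinely truncation error: both solve adjoint problems with the \emph{same} right-hand side structure but on $\C$ versus $\C_\Y$ (modulo the state difference $\tr w - \tr \bar v$, which is itself a truncation error). Here I would invoke the exponential estimate of \cite[Proposition 3.1]{NOS} for the decay in $y$, together with the standard energy argument comparing solutions on $\C$ and $\C_\Y$ (extend $\C_\Y$--functions by zero, use the Poincar\'e inequality \eqref{Poincare_ineq} and the bound $y > \Y$ on the truncated tail) to obtain $\|\nabla(w - \bar v)\|_{L^2(y^\alpha,\C)} \lesssim e^{-\sqrt{\lambda_1}\Y/4}(\|\orsf\|_{L^2(\Omega)} + \|\usf_d\|_{L^2(\Omega)})$, and similarly for $q - \bar p$. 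Feeding these into the displayed inequality, absorbing the $\|\ozsf - \orsf\|_{L^2(\Omega)}^2$ term on the left, and using Young's inequality yields the first asserted bound.

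Finally, the second bound on $\|\nabla(\bar\ue - \bar v)\|_{L^2(y^\alpha,\C)}$ follows by writing $\bar\ue - \bar v = (\bar\ue - w) + (w - \bar v)$: the first term is the extension of the state equation with right-hand side $\ozsf - \orsf$, controlled in the energy norm by $\|\ozsf - \orsf\|_{L^2(\Omega)}$ via \eqref{eq:weak_alpha_harm} and \eqref{Trace_estimate}, now already estimated by the first part; the second term is the truncation error bounded above. I expect the main obstacle to be the bookkeeping in the middle step — cleanly separating the ``control error'' contributions from the ``truncation error'' contributions in the chain of auxiliary adjoint states, and making sure the exponential factor is not degraded when passing through the adjoint equation (which requires that the $L^2(\Omega)$ data $\tr w - \tr \bar v$ of the adjoint problems is itself exponentially small, hence one more application of the trace inequality to the already-established state truncation estimate). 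Everything else is routine: coercivity of $a$ and $a_\Y$, the trace inequality, and Young's inequality.
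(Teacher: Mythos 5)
Your overall route---combining the two variational inequalities, discarding the subgradient cross term via the monotonicity property \eqref{eq:subdiff_monotone}, and splitting the adjoint difference through auxiliary state/adjoint pairs on the full cylinder driven by $\orsf$, with the purely truncation-induced pieces handled by exponential decay in $y$---is the natural one, and it is of the same type as the argument in the reference the paper cites for this proposition. There is, however, a genuine gap in how you close the estimate. Having reduced to
\begin{equation*}
\sigma\|\ozsf - \orsf\|_{L^2(\Omega)}^2 \le \bigl(\tr(\bar p_\infty - q), \orsf - \ozsf\bigr)_{L^2(\Omega)} + \bigl(\tr(q - \bar p), \orsf - \ozsf\bigr)_{L^2(\Omega)},
\end{equation*}
you propose to bound the first term by $\|\tr(\bar p_\infty - q)\|_{L^2(\Omega)}\|\ozsf - \orsf\|_{L^2(\Omega)} \le C\|\ozsf - \orsf\|_{L^2(\Omega)}^2$ and then ``absorb'' it on the left. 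That absorption requires the fixed constant $C$ (coming from the trace inequality and the stability of the state and adjoint problems) to be strictly smaller than $\sigma$, which nothing guarantees: $\sigma>0$ is an arbitrary parameter, taken as small as $10^{-4}$ in the paper's experiments. If $C \ge \sigma$ the inequality becomes vacuous, and Young's inequality does not rescue it, because this term carries no smallness---it is controlled only by the very quantity you are trying to estimate.

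The repair is the standard duality/sign observation, the same one the paper uses in Step 3 of the proof of Theorem \ref{thm:ideal_1} (see \eqref{eq:estimate_for_II_1}). With your notation, the state difference satisfies $a(\oue - w, \phi) = (\ozsf - \orsf, \tr \phi)_{L^2(\Omega)}$ and the adjoint difference satisfies $a(\phi, \bar p_\infty - q) = (\tr(\oue - w), \tr \phi)_{L^2(\Omega)}$ for all admissible $\phi$; choosing $\phi = \bar p_\infty - q$ in the former and $\phi = \oue - w$ in the latter gives
\begin{equation*}
\bigl(\tr(\bar p_\infty - q), \orsf - \ozsf\bigr)_{L^2(\Omega)} = - a(\oue - w, \bar p_\infty - q) = -\|\tr(\oue - w)\|_{L^2(\Omega)}^2 \le 0,
\end{equation*}
so this contribution can simply be dropped rather than absorbed. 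Only the genuinely truncation-induced term $(\tr(q - \bar p), \orsf - \ozsf)_{L^2(\Omega)}$ remains, and Young's inequality with weight $\sigma/2$ then yields the first bound with a constant independent of any relation between $\sigma$ and the stability constants; your bookkeeping for $q-\bar p$ (through an intermediate adjoint on $\C_\Y$ with datum $\tr w - \usf_d$) and your derivation of the second estimate from the first are fine as sketched.
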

\begin{proof}
See \cite[Theorem 5.2]{OS:Sparse}. 
\end{proof}

\EO{To present the following result we introduce, for $\mathfrak{a}, \mathfrak{b} \in \mathbb{R}$, the  projection operator}
\begin{equation} 
\label{def:Pi}
 \Pi_{[ \mathfrak{a}, \mathfrak{b} ]}: L^2(\Omega) \rightarrow \Zad, \quad \Pi_{[ \mathfrak{a}, \mathfrak{b} ]}(x') = \min\{\mathfrak{b}, \max\{\mathfrak{a}, x'\}\}.
\end{equation}

\begin{proposition}[projection formulas]
If $\bar \rsf$, $\bar v$, $\bar p$, and $\bar t$ denote the optimal variables associated to the truncated optimal control problem, then
\begin{align} 
  \bar \rsf(x') & = \Pi_{[\asf,\bsf]}\left( -\frac{1}{\sigma} \left( \tr \bar p(x') + \nu \bar t(x') \right) \right),
  \label{eq:projection_r}
  \\
  \bar \rsf(x') &= 0 \quad \Leftrightarrow \quad |\tr \bar p (x')| \leq \nu,
  \label{eq:sparsity_r}
  \\
  \bar t(x') & = \Pi_{[-1,1]}\left( -\frac{1}{\nu}  \tr \bar p(x')  \right).
  \label{eq:projection_t}
\end{align}
\end{proposition}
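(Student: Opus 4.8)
The plan is to derive the three identities from the variational inequality \eqref{eq:VI} together with the characterization \eqref{eq:subgradient} of the subgradient $\bar t \in \partial\psi(\orsf)$, exploiting the pointwise structure of the admissible set $\Zad$. The key observation is that $\Zad$ is defined by pointwise box constraints, so the variational inequality, which a priori is an integral inequality, can be localized: testing \eqref{eq:VI} with controls $\rsf$ that differ from $\orsf$ only on a set of arbitrarily small measure around an (a.e.) point $x'$ yields the pointwise inequality
\begin{equation*}
\left( \tr \bar p(x') + \sigma \orsf(x') + \nu \bar t(x') \right)(\rsf - \orsf(x')) \geq 0 \quad \forall\, \rsf \in [\asf,\bsf],
\end{equation*}
for a.e.\ $x' \in \Omega$. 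This is exactly the statement that $\orsf(x')$ solves the one-dimensional obstacle problem of projecting $-\frac{1}{\sigma}(\tr\bar p(x') + \nu\bar t(x'))$ onto $[\asf,\bsf]$, which gives \eqref{eq:projection_r} once one recalls the elementary fact that $v = \Pi_{[\mathfrak a,\mathfrak b]}(w)$ iff $(v-w)(\rsf - v)\ge 0$ for all $\rsf\in[\mathfrak a,\mathfrak b]$.

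For \eqref{eq:projection_t} I would run the same localization argument on the other ingredient: the subgradient inequality \eqref{eq:subgradient} applied to $\psi$, using that $\psi(\rsf) = \int_\Omega|\rsf|$ is an integral of the pointwise convex function $t\mapsto|t|$. One shows that $\bar t \in \partial\psi(\orsf) \subset L^\infty(\Omega)$ is characterized pointwise a.e.\ by $\bar t(x') \in \partial|\cdot|(\orsf(x'))$, i.e.\ $\bar t(x') = \operatorname{sign}(\orsf(x'))$ when $\orsf(x')\neq 0$ and $\bar t(x')\in[-1,1]$ when $\orsf(x') = 0$; equivalently $|\bar t(x')|\le 1$ and $\bar t(x')\orsf(x') = |\orsf(x')|$. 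Combining this pointwise subdifferential description with the projection formula \eqref{eq:projection_r} and doing a short case analysis on the sign of $\orsf(x')$ — the only nontrivial point is that on $\{\orsf = 0\}$ one must check the value of $-\frac1\sigma(\tr\bar p + \nu\bar t)$ is pinned between $\asf$ and $\bsf$, which holds because $\asf<0<\bsf$ and $\orsf(x')=0$ forces the unprojected argument to equal $0$ — yields $\bar t(x') = \Pi_{[-1,1]}(-\frac1\nu\tr\bar p(x'))$.

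Finally, \eqref{eq:sparsity_r} follows by combining the two projection formulas. If $\orsf(x')=0$ then, as just noted, the argument of the projection in \eqref{eq:projection_r} must itself lie in $[\asf,\bsf]$ and equal $0$ (using $\asf<0<\bsf$), so $\tr\bar p(x') + \nu\bar t(x') = 0$; since $|\bar t(x')|\le 1$ this gives $|\tr\bar p(x')|\le\nu$. Conversely, if $|\tr\bar p(x')|\le\nu$, set $t^\ast := -\frac1\nu\tr\bar p(x')\in[-1,1]$; then $\Pi_{[-1,1]}(t^\ast) = t^\ast$, and one checks that the pair $(\orsf(x'),\bar t(x'))=(0,t^\ast)$ is consistent with \eqref{eq:projection_r}–\eqref{eq:projection_t}, whence by uniqueness of the optimal control $\orsf(x')=0$. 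The main obstacle is the localization step itself: passing rigorously from the integral inequalities \eqref{eq:VI} and \eqref{eq:subgradient} to their pointwise a.e.\ counterparts requires a careful measure-theoretic argument (Lebesgue points, or the standard variational-inequality localization via needle/spike variations), and one must also justify that an element of $\partial\psi(\orsf)$ indeed lies in $L^\infty(\Omega)$ so that the pointwise manipulations and the duality pairing in \eqref{eq:VI} make sense; the rest is elementary one-dimensional convex analysis.
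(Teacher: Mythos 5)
The paper does not prove this proposition itself---it simply cites \cite[Corollary 3.7]{OS:Sparse}---and your argument (localizing the variational inequality \eqref{eq:VI} and the subgradient inequality to pointwise a.e.\ statements, then a case analysis on the sign of $\orsf(x')$ using $\asf<0<\bsf$) is exactly the standard route such a reference follows, and it is correct. The only step to tighten is the converse implication in \eqref{eq:sparsity_r}: instead of appealing to ``uniqueness of the optimal control'' for a candidate pair, use your already-established \eqref{eq:projection_t} to conclude that $|\tr\bar p(x')|\le\nu$ gives $\bar t(x')=-\tfrac{1}{\nu}\tr\bar p(x')$, hence $\tr\bar p(x')+\nu\bar t(x')=0$ and \eqref{eq:projection_r} yields $\orsf(x')=\Pi_{[\asf,\bsf]}(0)=0$.
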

\begin{proof}
See \cite[Corollary 3.7]{OS:Sparse}.
\end{proof}

%%%%%%%%%%%%%%%%%%%%%%%%%%%%%%%%%%%%%%%%%%%%%%%%%%%%%%%%%%%%%%%%%%%%%%%%%%%%%%%%%%%%%%%%%%%%%%%%%%%%%%%%%%%%%%%%%%%%%%%%%%%%%%%%%%%%%%%%%%%%%%%%%%%%%%%%%%%%%%%%%%%%%%%%%%
\subsection{A fully discrete scheme for the fractional optimal control problem}
\label{subsec:fully}
%%%%%%%%%%%%%%%%%%%%%%%%%%%%%%%%%%%%%%%%%%%%%%%%%%%%%%%%%%%%%%%%%%%%%%%%%%%%%%%%%%%%%%%%%%%%%%%%%%%%%%%%%%%%%%%%%%%%%%%%%%%%%%%%%%%%%%%%%%%%%%%%%%%%%%%%%%%%%%%%%%%%%%%%%%

In what follows we briefly recall the fully discrete scheme proposed in \cite{OS:Sparse} and review its a priori error analysis. To accomplish this task, we will assume in this section that
\begin{equation}
\label{eq:Omega_regular}
 \| w \|_{H^2(\Omega)} \lesssim \| \Delta_{x'} w \|_{L^2(\Omega)} \quad \forall w \in H^2(\Omega) \cap H^1_0(\Omega).
\end{equation}
This regularity assumption holds if, for instance, $\Omega$ is convex \cite{Grisvard}.

Before describing the aforementioned solution technique, we briefly recall the finite element approximation of \cite{NOS} for the state equation \eqref{eq:weak_alpha_harm}. Let $\T_{\Omega} = \{ K \}$ be a conforming and shape regular mesh of $\Omega$ into cells $K$ that are isoparametrically equivalent either to the unit cube $[0,1]^n$ or the unit simplex in $\R^n$ 
%We denote by $\Tr_{\Omega}$ the collections of all conforming refinements of an original mesh $\T_{\Omega}^0$. We assume that $\Tr_{\Omega}$ is shape regular 
\cite{BrennerScott,CiarletBook,Guermond-Ern}. 
Let $\mathcal{I}_{\Y}$ be a partition of $[0,\Y]$ with mesh points
\begin{equation}
\label{eq:graded_mesh}
  y_\ell = \left( \frac{\ell}{M}\right)^{\gamma} \Y, \quad \gamma > \frac{3}{(1-\alpha)}=\frac{3}{2s} > 1,\quad  \ell=0,\dots,M.
\end{equation}
We construct a mesh $\T_{\Y}$ over the cylinder $\C_{\Y}$ as $\T_{\Y} = \T_{\Omega} \otimes I_{\Y}$, the tensor product triangulation of $\T_{\Omega}$ and $\mathcal{I}_{\Y}$.
% We notice that each discretization of $\C_{\Y}$ depends on the truncation parameter $\Y$. 
The set of all the obtained meshes is denoted by $\Tr$. Notice that, owing to \eqref{eq:graded_mesh}, the meshes $\T_{\Y}$ are not shape regular but satisfy: if $T_1 = K_1 \times I_1$ and $T_2=K_2\times I_2$ are neighbors, \EO{then there is $\mu>0$ such that
$
h_{I_1} h_{I_2}^{-1} \leq \mu,
$
where $h_I = |I|$.}
%
%The following weak shape regularity condition is valid \cite{DL:05,NOS,NOS2}: there is a constant $\mu$ such that, for all $\T_{\Y} \in \Tr$, if $T_1 = K_1 \times I_1$ and $T_2=K_2\times I_2 \in \T_\Y$ have nonempty intersection, then 
%$
%     h_{I_1} h_{I_2}^{-1} \leq \mu,
%$
%where $h_I = |I|$. 
This condition allows for anisotropy in the extended variable $y$ \cite{DL:05,NOS,NOS2}, which is needed to compensate the rather singular behavior of $\ue$, solution to \eqref{eq:alpha_harm_weak}. We refer the reader to \cite{NOS} for details.

With the mesh $\T_{\Y} \in \Tr$ at hand, we define the finite element space
%the finite element space 
\begin{equation}
\label{eq:FESpace}
  \V(\T_\Y) = \left\{
            W \in C^0( \overline{\C_\Y} ): W|_T \in \mathcal{P}_1(K) \otimes \mathbb{P}_1(I) \ \forall T \in \T_\Y, \
            W|_{\Gamma_D} = 0
          \right\},
\end{equation}
where $\Gamma_D = \partial_L \C_{\Y} \cup \Omega \times \{ \Y\}$ is the Dirichlet boundary. If the base $K$ of the element $T = K \times I$ is a cube, $\mathcal{P}_1(K)$ stand for $\mathbb{Q}_1(K)$ -- the space of polynomials of degree not larger that one in each variable.
When $K$ is a simplex, the space $\mathcal{P}_1(K)$ is $\mathbb{P}_1(K)$, \ie the set
of polynomials of degree at most one.
%-- the space of polynomials of degree at most $1$
%. 
We also define $\U(\T_{\Omega})=\tr \V(\T_{\Y})$. Notice that $\U(\T_{\Omega})$ corresponds to a $\mathcal{P}_1$ finite element space over $\T_\Omega$. 
% 
% Before describing the numerical scheme introduced and developed in \cite{MR3429730}, we recall the regularity properties of the extended and truncated optimal controls $\ozsf$ and $\orsf$, respectively. If $\usf_{d} \in \mathbb{H}^{1-s}(\Omega)$ and $\asf \leq 0 \leq \bsf$ for $s \in (0,\tfrac{1}{2}]$, then $\ozsf \in H^1(\Omega) \cap \mathbb{H}^{1-s}(\Omega)$ \cite[Lemmas 3.5 and 5.9]{MR3429730}. Under the same framework, we have the same result for the truncated optimal control: $\orsf \in H^1(\Omega) \cap \mathbb{H}^{1-s}(\Omega)$ \cite[Lemma 5.9]{MR3429730}.

We now describe the \emph{fully discrete optimal control problem}. To accomplish this task, we first introduce the discrete sets
\begin{equation*}
\mathbb{Z}_{ad}(\T_{\Omega}) = \Zad \cap \mathbb{P}_0(\T_{\Omega}),
\quad 
\mathbb{P}_0(\T_{\Omega}) = \left\{Z \in L^{\infty}( \Omega ): Z|_K \in \mathbb{P}_0(K) \quad \forall K \in \T_\Omega \right\}.
\end{equation*}
%\ie it corresponds to the space of piecewise constant functions defined on the partition $\T_{\Omega}$ that satisfies the control bounds.

The fully discrete optimal control problem thus reads as follows: Find 
\[
\min \{ J(\tr V , Z): V \in \V(\T_{\Y}), Z \in \mathbb{Z}_{ad}(\T_{\Omega}) \}
\]
subject to
% the discrete state equation
\begin{equation}
\label{def:a_discrete}
a_\Y(V,W) =  ( Z, \tr W )_{L^2(\Omega)} \quad \forall W \in \V(\T_{\Y}).
\end{equation}
%%and the discrete control constraints
%%$
%%Z \in \mathbb{Z}_{ad}(\T_{\Omega}).
%%$
We recall that $J$ and $a_{\Y}$ are defined by \eqref{def:J} and \eqref{def:a_Y}, respectively. Standard arguments guarantee the existence of a unique optimal pair $(\bar{V}, \bar{Z}) \in \V(\T_\Y) \times \mathbb{Z}_{ad}(\T_{\Omega})$. In view of the results of \cite{NOS,MR3259962}, we invoke the discrete solution $\bar{V} \in \V(\T_\Y)$ and set
\begin{equation}
\label{eq:U_discrete}
\bar{U}:= \tr \bar{V}.
\end{equation}
We have thus obtained a fully discrete approximation $(\bar{U},\bar{Z}) \in  \U(\T_{\Omega}) \times \mathbb{Z}_{ad}(\T_{\Omega})$ of $(\ousf,\ozsf) \in \Hs \times \Zad$, the solution to the fractional optimal control problem.

The optimality conditions for the fully discrete optimal control problem read: the pair $(\bar{V}, \bar{Z}) \in \V(\T_\Y) \times \mathbb{Z}_{ad}(\T_{\Omega})$ is optimal if and only if $\bar V \in \V(\T_\Y)$ solves \eqref{def:a_discrete} and 
\begin{equation}
\label{eq:VI_discrete}
(\tr \bar{P} + \sigma \bar{Z} + \nu \bar \Lambda, Z- \bar{Z})_{L^2(\Omega)} \geq 0 \quad \forall Z \in \mathbb{Z}_{ad}(\T_{\Omega}),
\end{equation}
where $\bar \Lambda \in \partial \psi (\bar Z)$ and the optimal discrete adjoint state $\bar{P} \in \V(\T_\Y)$ solves
\begin{equation}
\label{eq:P_discrete}
a_\Y(W,\bar{P}) = ( \tr \bar{V} - \usfd , \textrm{tr}_{\Omega} W )_{L^2(\Omega)} \quad \forall W \in \V(\T_{\Y}).
\end{equation}

To write an priori error estimates for the aforementioned scheme, we first observe that $\#\T_{\Y} = M \, \# \T_\Omega$, and that $\# \T_\Omega \approx M^n$. Consequently, $\#\T_\Y \approx M^{n+1}$. Thus, if $\T_\Omega$ is quasi--uniform, we have that $h_{\T_{\Omega}} \approx (\# \T_{\Omega})^{-1/n}$.

\begin{theorem}[fractional control problem: error estimate]
\label{th:error_estimate}
Let $(\bar{V},\bar{Z}) $ $\in \V(\T_\Y) \times \mathbb{Z}_{ad}(\T_{\Omega})$ be the optimal pair for the fully discrete optimal control problem. Let $\bar{U} \in \U(\T_{\Omega})$ be defined as in \eqref{eq:U_discrete}. If $\Omega$ verifies \eqref{eq:Omega_regular} and $\usf_d \in \Ws$, then 
\begin{equation}
\label{fd2}
  \| \ozsf - \bar{Z} \|_{L^2(\Omega)} \lesssim  |\log (\# \T_{\Y})|^{2s}(\# \T_{\Y})^{\frac{-1}{n+1}} 
\left( \| \orsf \|_{H^1(\Omega)} + \| \usfd \|_{\Ws} \right),
\end{equation}
and
\begin{equation}
\label{fd1}
\| \ousf - \bar{U} \|_{\Hs} \lesssim  |\log (\# \T_{\Y})|^{2s}(\# \T_{\Y})^{\frac{-1}{n+1}} 
\left( \| \orsf \|_{H^1(\Omega)} + \| \usfd \|_{\Ws}  \right),
\end{equation} 
where the truncation parameter $\Y$, in the truncated optimal control problem, is chosen such that $\Y \approx \log( \# \T_{\Y} )$. The hidden constants in both inequalities are independent of the discretization parameters and the continuous and discrete optimal variables.
\end{theorem}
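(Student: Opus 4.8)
The plan is to estimate the control error $\| \ozsf - \bar Z \|_{L^2(\Omega)}$ by comparing the continuous and discrete variational inequalities, and then to bound the auxiliary state and adjoint errors by invoking the a priori finite element analysis for the state equation from \cite{NOS,MR3259962}. First I would test the continuous variational inequality \eqref{eq:VI} with $\rsf = \bar Z$ (legitimate since $\bar Z \in \mathbb Z_{ad}(\T_\Omega) \subset \Zad$) and the discrete one \eqref{eq:VI_discrete} with a suitable admissible discrete control, most naturally the $L^2(\Omega)$-projection $\Pi_0 \orsf$ of $\orsf$ onto $\mathbb P_0(\T_\Omega)$, which lies in $\mathbb Z_{ad}(\T_\Omega)$ because $\asf \le \orsf \le \bsf$ a.e. Adding the two inequalities and using the coercivity contribution $\sigma \| \orsf - \bar Z \|_{L^2(\Omega)}^2$ produces a bound of the form
\begin{equation*}
\sigma \| \orsf - \bar Z \|_{L^2(\Omega)}^2 \le (\tr \bar p - \tr \bar P, \bar Z - \orsf)_{L^2(\Omega)} + \nu(\bar t - \bar\Lambda, \bar Z - \orsf)_{L^2(\Omega)} + (\tfrac12\text{ interpolation terms from }\Pi_0\orsf).
\end{equation*}
The subgradient term is handled via monotonicity \eqref{eq:subdiff_monotone}: since $\bar t \in \partial\psi(\orsf)$ and $\bar\Lambda \in \partial\psi(\bar Z)$, one has $(\bar t - \bar\Lambda, \orsf - \bar Z)_{L^2(\Omega)} \ge 0$, so $\nu(\bar t - \bar\Lambda, \bar Z - \orsf)_{L^2(\Omega)} \le 0$ and this term can simply be dropped. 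The remaining work is to control the adjoint discrepancy $\tr\bar p - \tr\bar P$ and the projection terms.

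Next I would introduce an auxiliary intermediate adjoint state: the solution $\bar q \in \V(\T_\Y)$ of the discrete adjoint equation driven by the \emph{continuous} optimal state, i.e.\ $a_\Y(W,\bar q) = (\tr\bar v - \usf_d, \tr W)_{L^2(\Omega)}$ for all $W \in \V(\T_\Y)$, and similarly an auxiliary discrete state $\bar w \in \V(\T_\Y)$ solving $a_\Y(\bar w, W) = (\orsf, \tr W)_{L^2(\Omega)}$. Splitting $\tr\bar p - \tr\bar P = (\tr\bar p - \tr\bar q) + (\tr\bar q - \tr\bar P)$, the first difference is the finite element error for the adjoint equation with fixed right-hand side, controlled by the a priori estimate of \cite[Theorem 5.4 / Corollary 5.5]{NOS}-type results — namely $\| \tr\bar p - \tr\bar q\|_{\Hs} \lesssim |\log(\#\T_\Y)|^{2s}(\#\T_\Y)^{-1/(n+1)}\| \tr\bar v - \usf_d\|_{\mathbb H^{1-s}}$, where the graded mesh \eqref{eq:graded_mesh} and the regularity \eqref{eq:Omega_regular} together with $\usf_d \in \Ws$ guarantee the requisite weighted-Sobolev regularity of $\bar p$. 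The second difference $\tr\bar q - \tr\bar P$ depends linearly and continuously on $\tr\bar v - \tr\bar V$ (both solve discrete adjoint problems with the same bilinear form), and $\tr\bar v - \tr\bar V$ in turn splits into a finite element error $\tr\bar v - \tr\bar w$ (again bounded by the a priori rate, using $\orsf \in H^1(\Omega)$) plus $\tr\bar w - \tr\bar V$, which is controlled by $\| \orsf - \bar Z\|_{L^2(\Omega)}$ times $d_s^{-1/2}C_{\tr}$-type constants via \eqref{Trace_estimate} and the stability of \eqref{def:a_discrete}. Finally the projection terms are estimated by writing $(\tr\bar P + \sigma\bar Z + \nu\bar\Lambda, \Pi_0\orsf - \orsf)_{L^2(\Omega)}$ and exploiting that, on each $K$, $\Pi_0\orsf - \orsf$ has zero mean, so one may subtract the cellwise mean of $\tr\bar P + \sigma\orsf$ and bound by $h_{\T_\Omega}$ times $H^1$-seminorms — this is where $\| \orsf\|_{H^1(\Omega)}$ and an $H^1$ bound on $\tr\bar P$ (uniform, from the a priori analysis) enter, contributing a term of order $(\#\T_\Omega)^{-1/n} \approx (\#\T_\Y)^{-1/(n+1)}$ up to the $\Y \approx \log(\#\T_\Y)$ scaling.

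Collecting everything yields $\sigma\| \orsf - \bar Z\|_{L^2(\Omega)}^2 \lesssim \|\orsf - \bar Z\|_{L^2(\Omega)} \cdot \big(|\log(\#\T_\Y)|^{2s}(\#\T_\Y)^{-1/(n+1)}(\| \orsf\|_{H^1(\Omega)} + \| \usf_d\|_{\Ws})\big) + \big(|\log(\#\T_\Y)|^{2s}(\#\T_\Y)^{-1/(n+1)}\big)^2(\cdots)$; a Young's inequality absorption of the linear term then gives \eqref{fd2}, after adding the exponential truncation error $e^{-\sqrt{\lambda_1}\Y/4}$ from the exponential-convergence Proposition, which with $\Y \approx \log(\#\T_\Y)$ is dominated by the algebraic rate. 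Estimate \eqref{fd1} for $\| \ousf - \bar U\|_{\Hs}$ follows by the triangle inequality $\| \ousf - \bar U\|_{\Hs} \le \| \tr\bar v - \tr\bar w\|_{\Hs} + \| \tr\bar w - \tr\bar V\|_{\Hs} + (\text{truncation})$, where the middle term is now bounded by the just-established control error $\| \orsf - \bar Z\|_{L^2(\Omega)}$ via \eqref{Trace_estimate}. The main obstacle I anticipate is the regularity bookkeeping: one must verify that $\bar p$ (equivalently its extension) has enough weighted-Sobolev regularity for the anisotropic interpolation estimates of \cite{NOS} to apply with the stated rate, which requires $\tr\bar v - \usf_d \in \Ws$ — and this is precisely where the hypothesis $\usf_d \in \Ws$ (forcing $\usf_d|_{\partial\Omega} = 0$ when $s \le \tfrac12$) and the domain regularity \eqref{eq:Omega_regular} are indispensable; without them the adjoint inherits the boundary singularity \eqref{eq:u_singular_at_the_boundary} and the rate degrades, which is exactly the limitation motivating the a posteriori theory developed later in the paper.
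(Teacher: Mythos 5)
Your overall architecture is the standard one and matches, in outline, the argument of \cite[Theorem 6.4]{OS:Sparse} to which the paper defers (the paper gives no proof of Theorem \ref{th:error_estimate} itself): test \eqref{eq:VI} with $\bar Z$ and \eqref{eq:VI_discrete} with the cellwise average of $\orsf$, drop the sparsity contribution via the monotonicity \eqref{eq:subdiff_monotone}, reduce the remainder to finite element errors for state and adjoint problems with fixed data plus a projection term with cellwise zero mean, invoke the regularity hypotheses ($\orsf\in H^1(\Omega)$, $\usf_d\in\Ws$, \eqref{eq:Omega_regular}) for the a priori rates, and absorb the truncation error through the exponential convergence proposition with $\Y\approx\log(\#\T_\Y)$; the passage from \eqref{fd2} to \eqref{fd1} by triangle inequality and \eqref{Trace_estimate} is also fine.

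There is, however, a genuine gap in your treatment of the adjoint discrepancy. You bound $(\tr(\bar q-\bar P),\bar Z-\orsf)_{L^2(\Omega)}$ by Cauchy--Schwarz and discrete stability, which yields $\bigl(\|\tr(\bar v-\bar w)\|_{L^2(\Omega)}+c\,\|\orsf-\bar Z\|_{L^2(\Omega)}\bigr)\|\orsf-\bar Z\|_{L^2(\Omega)}$, where $c$ is a product of trace and stability constants (of size roughly $C_{\tr}^2 d_s\leq 1$) that is unrelated to $\sigma$. The resulting quadratic term $c\,\|\orsf-\bar Z\|^2_{L^2(\Omega)}$ can be absorbed into the left-hand side $\sigma\|\orsf-\bar Z\|^2_{L^2(\Omega)}$ only if $\sigma>c$; the theorem assumes no such smallness, and the paper's own experiments use $\sigma$ as small as $10^{-4}$, so this step fails in general. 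The missing idea is the state--adjoint duality (sign) argument: with your auxiliary $\bar w,\bar q\in\V(\T_\Y)$ and the symmetry of $a_\Y$,
\begin{multline*}
(\bar Z-\orsf,\tr(\bar q-\bar P))_{L^2(\Omega)} = a_\Y(\bar V-\bar w,\bar q-\bar P) \\
= (\tr(\bar v-\bar w),\tr(\bar V-\bar w))_{L^2(\Omega)} - \|\tr(\bar V-\bar w)\|^2_{L^2(\Omega)} \leq \tfrac{1}{4}\,\|\tr(\bar v-\bar w)\|^2_{L^2(\Omega)},
\end{multline*}
so only the square of a pure finite element error with fixed datum $\orsf$ survives and no term quadratic in the control error appears at all. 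This is exactly the trick the paper employs, at the continuous level, in \eqref{eq:estimate_for_II_1} of the a posteriori analysis, and it (or its continuous analogue with the auxiliary pair driven by $\bar Z$) is what the cited a priori proof rests on; without it your Young-inequality absorption does not close. A minor additional point: in the projection term you should compare $\tr\bar P$ with $\tr\bar p$ (whose $H^1(\Omega)$ regularity follows from the assumptions) rather than postulate a uniform discrete $H^1$ bound, but this is a detail, not a structural problem.
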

\begin{proof}
 See \cite[Theorem 6.4]{OS:Sparse}.
\end{proof}

\begin{remark}[conditions for a priori theory]
\rm
\EO{We stress that the results of Theorem \ref{th:error_estimate} are valid if $\Omega$ satisfies \eqref{eq:Omega_regular} and $\usf_d \in \Ws$. These conditions guarantee that $\ozsf$ and $\orsf$ belong to $H_0^1(\Omega)$; see \cite[Theorem 3.9]{OS:Sparse}.}
\end{remark}

We conclude this section by defining the following auxiliary variables that will be of importance in the a posteriori error analysis that we will perform:
\begin{equation}
\label{eq:v_star}
\mathpzc{v} \in \HL(y^{\alpha},\C_{\Y}): \quad a_{\Y} (\mathpzc{v},\phi) = (  \bar{Z}, \tr \phi )_{L^2(\Omega)} \quad \forall \phi \in \HL(y^{\alpha},\C_{\Y}),
\end{equation}
and
\begin{equation}
\label{eq:q_truncated}
\mathpzc{p} \in \HL(y^{\alpha},\C_{\Y}):\quad a_\Y(\phi,\mathpzc{p}) = (  \tr \bar{V} - \usfd, \tr \phi )_{L^2(\Omega)} \quad \forall \phi \in \HL(y^{\alpha},\C_{\Y}).
\end{equation}

%%%%%%%%%%%%%%%%%%%%%%%%%%%%%%%%%%%%%%%%%%%%%%%%%%%%%%%%%%%%%%%%%%%%%%%%%%%%%%%%%%%%%%%%%%%%%%%%%%%%%%%%%%%%%%%%%%%%%%%%%%%%%%%%%%%%%%%%%%%%%%%%%%%%%%%%%%%%%%%%%%%%%%%%%%
\section{An \emph{ideal} a posteriori error estimator}
\label{subsec:ideal_a_posteriori}

The main goal of this work is the derivation and analysis of a computable a posteriori error estimator for  problem \eqref{def:minJ}--\eqref{eq:control_constraints}. An a posteriori error estimator is a computable quantity that provides information about the local quality of the underlying approximated solution. It is an essential ingredient of AFEMs,
which are iterative methods that improve the quality of the approximated solution and are based on loops of the form
\begin{equation}
\label{eq:loop}
\textsf{\textup{SOLVE}} \rightarrow \textsf{\textup{ESTIMATE}} \rightarrow \textsf{\textup{MARK}} \rightarrow \textsf{\textup{REFINE}}.
\end{equation}
A posteriori error estimators are the heart of the step $\textsf{\textup{ESTIMATE}}$. The theory for linear and second--order elliptic boundary value problems is well--established. We refer the reader to \cite{MR1770058,NSV:09,Verfurth} for an up to date discussion that also includes the design of AFEMs, convergence results, and optimal complexity.

The a posteriori error analysis for finite element approximations of constrained optimal control problems is currently under development; the main source of difficulty being its inherent nonlinear feature. Starting with the pioneering work \cite{MR1887737}, several authors have contributed to its advancement. For an up to date survey on a posteriori error analysis for optimal control problems we refer the reader to \cite{AOSR,KRS,MR3485971}. In contrast to these advances, the theory for optimal control problems involving a sparsity functional, as \eqref{def:minJ}, is much less developed. To the best of our knowledge the only works that provides an advance concerning this matter are \cite{AFO:17} and \cite{MR2826983}. In \cite{MR2826983}, the authors propose a residual--type a posteriori error estimator, for the picewise constant discretization of the optimal control, and prove that it yields an upper bound for the approximation error of the state and control variables \cite[Theorem 6.2]{MR2826983}. These results have been recently extended in \cite{AFO:17}, \EO{where the authors consider three different strategies
to approximate the control variable: piecewise constant discretization, piecewise linear discretization, and the so--called variational discretization approach. The authors propose a posteriori error estimators, for each scheme, and derive reliability and efficiency estimates.}

In this work, we follow \cite{HHIK,KRS,MR1887737} and design an a posteriori error estimator for problem \eqref{def:minJ}--\eqref{eq:control_constraints}. To accomplish this task, it is essential to have at hand an error estimator for \eqref{eq:alpha_harm_truncated}. The latter equation involves a nonuniformly elliptic operator with the variable coefficient $y^{\alpha}$ that vanishes ($0 < \alpha < 1$) or blows up $(-1 < \alpha < 0)$ as $y \downarrow 0$. Consequently, the design of error estimators for \eqref{eq:alpha_harm_truncated} is far from being trivial: In fact, a simple computation reveals that the usual residual estimator does not apply. In addition, numerical evidence shows that anisotropic refinement in the extended dimension is essential to observe optimal rates of convergence. Inspired by \cite{MR880421,CF:00,MNS02}, the authors of \cite{CNOS2} analyze an a posteriori error estimator for \eqref{eq:alpha_harm_truncated}  based on the solution of weighted local problems on cylindrical stars.

As a first step, we explore an \emph{ideal} anisotropic estimator that can be decomposed as the sum of four contributions:
\begin{equation}
\label{eq:ideal_aux}
 \E^2_{\textrm{ocp}}= \E_{V}^2 + \E_{P}^2 + \E^2_{Z} + \E^2_{\Lambda}.
\end{equation}
The error indicators $\E_{V}$ and $\E_{P}$, that are related to discretization of the state and adjoint equations, follow from \cite{CNOS2}. They allow for the nonuniform coefficient $y^{\alpha}$ and the anisotropic meshes in the family $\Tr$. The error indicators $\E_{Z}$ and $\E_{\Lambda}$ are related to the discretization of the control variable and the associated subgradient. We refer to this estimator as \emph{ideal} since the computation of $\E_{V}$ and $\E_{P}$ involve the resolution of problems in infinite dimensional spaces.  We derive, in Section \ref{subsec:ideal_a_posteriori}, the equivalence between the ideal estimator \eqref{eq:ideal_aux} and the error without oscillation terms. Such an equivalence relies on a geometric condition imposed on the mesh that is independent of the exact optimal variables and is computationally implementable. This ideal estimator sets the basis to define, in Section \ref{subsec:computable_a_posteriori}, a computable one, which is also decomposed as the sum of four contributions. This computable estimator is, under certain assumptions, equivalent, up to data oscillations terms, to the error. 

%%%%%%%%%%%%%%%%%%%%%%%%%%%%%%%%%%%%%%%%%%%%%%%%%%%%%%%%%%%%%%%%%%%%%%%%%%%%%%%%%%%%%%%%%%%%%%%%%%%%%%%%%%%%%%%%%%%%%%%%%%%%%%%%%%%%%%%%%%%%%%%%%%%%%%%%%%%%%%%%%%%%%%%%%%%
\subsection{Preliminaries}
\label{subsec:preliminaries}
%%%%%%%%%%%%%%%%%%%%%%%%%%%%%%%%%%%%%%%%%%%%%%%%%%%%%%%%%%%%%%%%%%%%%%%%%%%%%%%%%%%%%%%%%%%%%%%%%%%%%%%%%%%%%%%%%%%%%%%%%%%%%%%%%%%%%%%%%%%%%%%%%%%%%%%%%%%%%%%%%%%%%%%%%%%

We follow \cite[Section 5.1]{CNOS2} and introduce some notation and terminology. Given a node $z$ on $\T_{\Y}$, we write $z = (z',z'')$ where $z'$ and $z''$ correspond to nodes on $\T_{\Omega}$ and $\mathcal{I}_{\Y}$, respectively. 

Given $K \in \T_{\Omega}$, we denote by $\N(K)$ and $\Nin(K)$ the set of nodes and interior nodes of $K$, respectively. We also define 
\[
 \N(\T_{\Omega}) = \cup \{ \N(K): K \in \T_\Omega \}, \qquad \Nin(\T_{\Omega}) = \cup \{\Nin(K): K \in \T_\Omega \}.
\]
Given $T \in \T_{\Y}$, we define $\N(T)$, $\Nin(T)$, $\N(\T_{\Y})$, and $\Nin(\T_{\Y})$, accordingly. 

Given $z' \in \N(\T_{\Omega})$, we define 
$
 S_{z'} = \cup \{  K \in \T_\Omega : \ K \ni z  \}\subset \Omega 
$
%%\[
%%  S_{z'} := \bigcup \left\{ K \in \T_\Omega : \ K \ni z' \right\} \subset \Omega 
%%\]
and the \emph{cylindrical star} around $z'$ as
\begin{equation}
\label{def:cylindrical_star}
  \C_{z'} := \bigcup\left\{ T \in \T_\Y : T = K \times I,\ K \ni z'  \right\}= S_{z'} \times (0,\Y) \subset \C_{\Y}.
\end{equation}

Given a cell $K \in \T_{\Omega}$, we define the \emph{patch} $S_K$ as 
$
  S_K := \bigcup_{z' \in K} S_{z'}.
$
For $T \in \T_\Y$, we define the patch $S_T$ similarly. Given $z' \in \N(\T_{\Omega})$
we define its \emph{cylindrical patch} as
\[
\D_{z'} := \bigcup  \left\{ \C_{w'}: w' \in S_{z'} \right\} \subset \C_{\Y}.
\]

Finally, for each $z' \in \N(\T_{\Omega})$, we set $h_{z'} := \min\{h_{K}: K \ni z' \}$. 
% 
% If $\{ \phi_z: z \in \Nin(\T_{\Y}) \}$ denotes the canonical basis of $\V(\T_{\Y})$ and $W \in \V(\T_{\Y})$, then
% \[
%  W = \sum_{z \in \Ninn(\T_{\Y})} W(z) \phi_z.
% \]
% Analogously, we denote by $\{ \varphi_{z'}: z' \in \Nin(\T_{\Omega}) \}$ the the canonical basis of the discrete space $\U(\T_{\Omega}) = \tr \V(\T_{\Y})$.

%%%%%%%%%%%%%%%%%%%%%%%%%%%%%%%%%%%%%%%%%%%%%%%%%%%%%%%%%%%%%%%%%%%%%%%%%%%%%%%%%%%%%%%%%%%%%%%%%%%%%%%%%%%%%%%%%%%%%%%%%%%%%%%%%%%%%%%%%%%%%%%%%%%%%%%%%%%%%%%%%%%%%%%%%%%
\subsection{Local weighted Sobolev spaces}
\label{subsec:local_spaces}
%%%%%%%%%%%%%%%%%%%%%%%%%%%%%%%%%%%%%%%%%%%%%%%%%%%%%%%%%%%%%%%%%%%%%%%%%%%%%%%%%%%%%%%%%%%%%%%%%%%%%%%%%%%%%%%%%%%%%%%%%%%%%%%%%%%%%%%%%%%%%%%%%%%%%%%%%%%%%%%%%%%%%%%%%%%

%In what follows 
We define local weighted Sobolev spaces that will be instrumental for our analysis.

\begin{definition}[local weighted Sobolev spaces]
Given $z' \in \N(\T_\Omega)$, we define
\begin{equation}
\label{eq:local_space}
\W(\C_{z'}) = \left \{ w \in H^1(y^{\alpha},\C_{z'} ): w = 0 \textrm{ on } \partial \C_{z'} 
\setminus \Omega \times \{ 0\} \right \},
\end{equation}
where $\C_{z'}$ denotes the cylindrical star around $z'$ defined in \eqref{def:cylindrical_star}.
\end{definition}

Since $y^\alpha$ belongs to the class $A_2(\R^{n+1})$, the space $\W(\C_{z'})$ is Hilbert \cite{Javier,FKS:82,GU,Muckenhoupt,Turesson}. In addition, 
%%\cite[Proposition 5.8]{CNOS2} reveals that
%%\begin{equation}
%%\label{eq:Poincare}
%%  \| w \|_{L^2(y^{\alpha},\C_{z'})} \lesssim \Y \|  \nabla w \|_{L^2(y^{\alpha},\C_{z'})} \quad \forall w \in \W(\C_{z'}),
%%\end{equation}
%%with $\Y$ corresponding to the truncation parameter defined in Section \ref{subsec:truncated}. We also 
we have that \cite[Proposition 2.1]{CDDS:11}
\begin{equation}
 \label{Trace_estimate_local}
 \| \tr w \|_{L^2(S_{z'})} \leq C_{\tr} \| \nabla w \|_{L^2(y^{\alpha},\C_{z'})} \quad \forall w \in \W(\C_{z'}),
\quad  C_{\tr} \leq d_s^{-\frac{1}{2}}.
\end{equation}
%%where $C_{\tr} \leq d_s^{-\frac{1}{2}}$.
%and notice that the same arguments of \cite[Section 2.3]{CNOS2} yield $C_{\tr} \leq d_s^{-\frac{1}{2}}$.

\subsection{Auxiliary variables}
\label{subsec:auxiliary_variables}

\EO{To perform an a posteriori error analysis, we introduce the following two auxiliary variables:}
%In the next section we will propose an error estimator whose analysis relies on two 
%auxiliary variables: $\tilde \rsf$ and $\tilde \lambda$. With the operator $\Pi_{[\mathfrak{a},\mathfrak{b}]}$, defined as in \eqref{def:Pi}, at hand, we define
\begin{equation}
\label{eq:auxiliary_variables}
\tilde \lambda(x') := \Pi_{[-1,1]} \left( - \frac{1}{\nu} \tr \bar  P(x') \right), \qquad 
\tilde \rsf := \Pi_{[\asf,\bsf]} \left( - \frac{1}{\sigma} \left( \tr \bar  P(x') + \nu \tilde \lambda(x') \right) \right),
\end{equation}
\EO{where $\Pi_{[\mathfrak{a},\mathfrak{b}]}$ is defined as in \eqref{def:Pi}.}

%In the next result 
We now derive two important properties that will be essential for our analysis.
%the a posteriori error analysis that we will perform. 

\begin{lemma}
Let $\tilde \rsf \in \Zad$ and $\tilde \lambda \in L^{\infty}(\Omega)$ be defined as in \eqref{eq:auxiliary_variables}. Then, $\tilde \rsf$ can be characterized by the variational inequality 
\begin{equation}
 \label{eq:key_VI}
 (\tr \bar P + \sigma \tilde \rsf + \nu \tilde \lambda, \rsf - \tilde \rsf)_{L^2(\Omega)} \geq 0 \quad \forall \rsf \in \Zad,
\end{equation}
and $\tilde \lambda \in \partial \psi(\tilde \rsf)$.
\label{lemma:key_results}
\end{lemma}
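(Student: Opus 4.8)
The goal is to show that the auxiliary variables $\tilde\rsf$ and $\tilde\lambda$ defined through the pointwise projection formulas in \eqref{eq:auxiliary_variables} satisfy the same structural relations as the genuine optimal variables, namely the variational inequality \eqref{eq:key_VI} and the subgradient inclusion $\tilde\lambda\in\partial\psi(\tilde\rsf)$. The natural strategy is to mimic the proof of the analogous statement for the truncated optimal control problem (Corollary 3.7 of \cite{OS:Sparse}, recalled here as the projection formulas), but reversed: instead of deriving the projection formulas from the optimality system, I derive the optimality-type relations from the projection formulas, which are now taken as definitions. The key point is that $\tr\bar P$ plays the role of the adjoint state and the identities \eqref{eq:auxiliary_variables} are literally the fixed-point/projection characterizations used in the sparse-control literature \cite{CHW:12,HHIK,KRS}.

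\textbf{Step 1: The variational inequality.} I would argue pointwise a.e.\ in $\Omega$. Fix $x'\in\Omega$ and set $q:=\tr\bar P(x')$, $\mu:=\tilde\lambda(x')$, $r:=\tilde\rsf(x')$. By definition $r=\Pi_{[\asf,\bsf]}\!\left(-\tfrac1\sigma(q+\nu\mu)\right)$. The standard characterization of the metric projection onto the interval $[\asf,\bsf]\subset\R$ states that $r=\Pi_{[\asf,\bsf]}(\xi)$ if and only if $(r-\xi)(w-r)\ge 0$ for all $w\in[\asf,\bsf]$. Applying this with $\xi=-\tfrac1\sigma(q+\nu\mu)$ gives $\bigl(r+\tfrac1\sigma(q+\nu\mu)\bigr)(w-r)\ge 0$, i.e.\ $\bigl(q+\sigma r+\nu\mu\bigr)(w-r)\ge 0$ for all $w\in[\asf,\bsf]$. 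Integrating this inequality over $\Omega$ against an arbitrary $\rsf\in\Zad$ (which satisfies $\rsf(x')\in[\asf,\bsf]$ a.e.) yields exactly \eqref{eq:key_VI}. One should also note in passing that $\tilde\rsf\in\Zad$ by construction since the range of $\Pi_{[\asf,\bsf]}$ is $[\asf,\bsf]$, and that $\tilde\lambda\in L^\infty(\Omega)$ with $\|\tilde\lambda\|_{L^\infty(\Omega)}\le 1$ since $\Pi_{[-1,1]}$ maps into $[-1,1]$; measurability is inherited from $\tr\bar P\in L^2(\Omega)$ and continuity of the projections.

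\textbf{Step 2: The subgradient inclusion.} Since $\psi(\rsf)=\int_\Omega|\rsf(x')|\diff x'$ and $\partial\psi$ acts pointwise, it suffices (by the standard description of the subdifferential of the absolute value, extended to $L^1(\Omega)$, cf.\ the discussion in \cite{CHW:12,MR2826983}) to show that for a.e.\ $x'\in\Omega$ one has $\tilde\lambda(x')\in\partial|\cdot|(\tilde\rsf(x'))$, i.e.\ $\tilde\lambda(x')=\operatorname{sign}(\tilde\rsf(x'))$ whenever $\tilde\rsf(x')\ne 0$ and $\tilde\lambda(x')\in[-1,1]$ whenever $\tilde\rsf(x')=0$. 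The second condition is automatic from Step 1's observation $|\tilde\lambda|\le 1$. For the first, I distinguish cases according to the value of $-\tfrac1\nu\tr\bar P(x')$: if $-\tfrac1\nu q\in[-1,1]$ then $\mu=-\tfrac1\nu q$, so $q+\nu\mu=0$ and hence $r=\Pi_{[\asf,\bsf]}(0)=0$ (using $\asf<0<\bsf$), and there is nothing to prove; if $-\tfrac1\nu q>1$, i.e.\ $q<-\nu$, then $\mu=1$, so $-\tfrac1\sigma(q+\nu)>0$, hence $r=\Pi_{[\asf,\bsf]}$ of a positive number, which is $>0$ (again using $\bsf>0$), and indeed $\mu=1=\operatorname{sign}(r)$; symmetrically if $-\tfrac1\nu q<-1$ then $\mu=-1=\operatorname{sign}(r)$ with $r<0$. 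This exhausts all cases and establishes $\tilde\lambda\in\partial\psi(\tilde\rsf)$.

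\textbf{Main obstacle.} The computations above are elementary real-variable case analysis; the only genuinely delicate point is the passage from the pointwise subdifferential of $|\cdot|$ on $\R$ to the subdifferential of $\psi$ on $L^1(\Omega)$, and the measurability of $\tilde\lambda$ and $\tilde\rsf$ — both of which are standard in the sparse-control literature but should be cited carefully (the duality pairing in \eqref{eq:subgradient} must be interpreted between $L^\infty(\Omega)$ and $L^1(\Omega)$, consistent with \eqref{eq:psi} being defined on $L^1(\Omega)$). I expect no other difficulty; the result is essentially a restatement of the projection formulas with $\tr\bar P$ in place of $\tr\bar p$.
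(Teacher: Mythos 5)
Your proposal is correct and follows essentially the same route as the paper: the variational inequality is obtained from the standard pointwise characterization of the projection onto $[\asf,\bsf]$ (the paper simply cites Tr\"oltzsch's Lemma 2.26 for this), and the inclusion $\tilde\lambda\in\partial\psi(\tilde\rsf)$ is proved by the same elementary case analysis of the projection formulas, the only cosmetic difference being that you split on the value of $-\nu^{-1}\tr\bar P(x')$ whereas the paper splits on the sign of $\tilde\rsf(x')$.
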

\begin{proof}
The variational inequality \eqref{eq:key_VI} follows immediately from the arguments elaborated in the proof of \cite[Lemma 2.26]{Tbook}. It thus suffices to prove that $\tilde \lambda \in \partial \psi(\tilde \rsf)$. To accomplish this task, we first assume that $\tilde \rsf (x') = 0$. In view of the definition of $\tilde \lambda$, we immediately conclude that $\tilde \lambda(x') \in [-1,1]$. Let us now assume that $\tilde \rsf(x') > 0$. This and \eqref{eq:auxiliary_variables} imply that
\[
 \tr \bar P(x') + \nu \EO{\tilde \lambda(x')} < 0 \iff \tilde \lambda(x') < -\frac{1}{\nu}\tr \bar P(x') \implies \tilde \lambda(x') = 1.
\]
Analogously, we can prove that, if $\tilde \rsf(x') < 0$, then $\tilde \lambda(x') = -1$. In conclusion, $\tilde \lambda \in L^{\infty}(\Omega)$ is such that
\[
\tilde \lambda(x') = 1, \quad \tilde \rsf (x') > 0, \qquad \tilde \lambda(x') = - 1, \quad \tilde \rsf (x') < 0, \quad \tilde \lambda(x') \in [-1,1], \quad \tilde \rsf (x') = 0.
\]
This, that is equivalent to $\tilde \lambda \in \partial \psi(\tilde \rsf)$, concludes the proof.
\end{proof}

\EO{We conclude the section by defining the following auxiliary variables:
\begin{equation}
\label{def:tildev}
\tilde{v} \in \HL(y^{\alpha},\C_{\Y}): \quad a_{\Y} (\tilde{v},\phi) = ( \tilde{\rsf}, \tr \phi )_{L^2(\Omega)} \quad \forall \phi \in \HL(y^{\alpha},\C_{\Y}),
\end{equation}
and}
\begin{equation}
\label{eq:w_truncated}
\tilde p \in \HL(y^{\alpha},\C_{\Y}): \quad a_{\Y} (\phi,\tilde p) = (  \tr \tilde{v} - \usfd, \tr \phi )_{L^2(\Omega)} \quad \forall \phi \in \HL(y^{\alpha},\C_{\Y}).
\end{equation}

%%%%%%%%%%%%%%%%%%%%%%%%%%%%%%%%%%%%%%%%%%%%%%%%%%%%%%%%%%%%%%%%%%%%%%%%%%%%%%%%%%%%%%%%%%%%%%%%%%%%%%%%%%%%%%%%%%%%%%%%%%%%%%%%%%%%%%%%%%%%%%%%%%%%%%%%%%%%%%%%%%%%%%%%%%%
\subsection{A posteriori error analysis}
\label{subsec:ideal_a_posteriori_analysis}
%%%%%%%%%%%%%%%%%%%%%%%%%%%%%%%%%%%%%%%%%%%%%%%%%%%%%%%%%%%%%%%%%%%%%%%%%%%%%%%%%%%%%%%%%%%%%%%%%%%%%%%%%%%%%%%%%%%%%%%%%%%%%%%%%%%%%%%%%%%%%%%%%%%%%%%%%%%%%%%%%%%%%%%%%%%

In this section we design and study an \emph{ideal} a posteriori error estimator for  \eqref{def:minJ}--\eqref{eq:control_constraints}. We refer to such an estimator as ideal since it involves the resolution of local problems on the infinite dimensional spaces $\W(\C_{z'})$; the estimator is therefore not computable. We prove that it is equivalent to the error without oscillation terms; see Theorems \ref{thm:ideal_1} and \ref{thm:ideal_2} below. 

We define the ideal a posteriori error estimator as the sum of four contributions:
\begin{multline}
 \label{eq:defofEocp}
 \E^2_{\textrm{ocp}}(\bar{V},\bar{P},\bar{Z}, \bar{\Lambda}; \T_{\Y}) = \E^2_{V}(\bar{V},\bar{Z}; \N(\T_{\Omega})) + \E^2_{P}(\bar{P},\bar{V}; \N(\T_{\Omega})) \\
 + \E^2_{Z}(\bar{Z},\bar{P}; \T_{\Omega}) + \E^2_{\Lambda}(\bar{\Lambda},\bar{P}; \T_{\Omega}),
\end{multline}
where $\bar{V}$, $\bar{P}$, $\bar{Z}$, and $\bar \Lambda$ denote the optimal variables associated to the fully discrete optimal control problem of Section \ref{subsec:fully}. In what follows, we describe each contribution in \eqref{eq:defofEocp} separately. To accomplish this task, we define the bilinear form
\begin{equation}
 \label{eq:a_local}
  a_{z'}: \W(\C_{z'}) \times \W(\C_{z'}) \rightarrow \mathbb{R}, \quad a_{z'}(w,\phi) = \frac{1}{d_s} \int_{\C_{z'}} y^{\alpha} \nabla w \cdot \nabla \phi \diff x.
\end{equation}

The first contribution in \eqref{eq:defofEocp} corresponds to the a posteriori error estimator of \cite[Section 5.3]{CNOS2}. Let us define, for $z' \in \N(\T_{\Omega})$,
\begin{equation}
\label{eq:ideal_local_problemV}
\zeta_{z'} \in \W(\C_{z'}): \quad a_{z'}(\zeta_{z'},\psi) = \langle \bar{Z}, \tr \psi  \rangle  -   a_{z'}(\bar{V}, \psi) \quad \forall \psi \in \W(\C_{z'}).
\end{equation}
With this definition at hand, we define the posteriori error indicators and estimator
\begin{equation}
\label{eq:defofEV} 
\E_{V}^2(\bar{V},\bar{Z}; \C_{z'}) = \| \nabla\zeta_{z'} \|^2_{L^2(y^{\alpha},\C_{z'})}, 
\, \, 
\E_{V}^2(\bar{V},\bar{Z}; \N(\T_{\Omega})) =  \sum_{z' \in \N(\T_{\Omega})} \E_{V}^2(\bar{V},\bar{Z}; \C_{z'}).
\end{equation}

We now describe the second contribution in \eqref{eq:defofEocp}. Let us define, for $z' \in \N(\T_{\Omega})$,
\begin{equation}
\label{eq:ideal_local_problemP}
\chi_{z'} \in \W(\C_{z'}): \quad 
a_{z'}(\chi_{z'},\psi) = \langle \tr \bar{V} - \usf_d, \tr \psi  \rangle  - a_{z'}(\psi,\bar{P}) \quad \forall \psi \in \W(\C_{z'}).
\end{equation}
We define the posteriori error indicators and estimator
\begin{equation}
\label{eq:defofEP} 
\E_{P}^2(\bar{P},\bar{V}; \C_{z'}) = \| \nabla\chi_{z'} \|^2_{L^2(y^{\alpha},\C_{z'})},
\, 
\E_{P}^2(\bar{P},\bar{V}; \N( \T_{\Omega} ) ) = \sum_{z' \in \N( \T_{\Omega} )} \E_{P}^2(\bar{P},\bar{V}; \C_{z'}).
\end{equation}

The third contribution in \eqref{eq:defofEocp} is defined as follows:
\begin{equation}
\label{eq:defofEZglobal}
\E_{Z}(\bar{Z},\bar{P}; K) = \| \bar{Z} - \tilde \rsf \|_{L^2(K)},
\quad  
\E_{Z}^2(\bar{Z},\bar{P}; \T_{\Omega}) = \sum_{K \in \T_{\Omega}}  \E^2_{Z}(\bar{Z},\bar{P}; K),
\end{equation}
where the auxiliary variable $\tilde \rsf$ is defined as in \eqref{eq:auxiliary_variables}.

Finally, and having in mind the definition of the auxiliary variable $\tilde \lambda$, given in \eqref{eq:auxiliary_variables}, we define the fourth contribution in \eqref{eq:defofEocp}:
\begin{equation}
\label{eq:defofELambdaglobal}
\E_{\Lambda}(\bar{\Lambda},\bar{P}; K) = \| \bar{\Lambda} - \tilde \lambda\|_{L^2(K)},
\quad  
\E_{\Lambda}^2(\bar{\Lambda},\bar{P}; \T_{\Omega}) = \sum_{K \in \T_{\Omega}}  \E^2_{\Lambda}(\bar{\Lambda},\bar{P}; K).
\end{equation}

Since $\bar V$ can be seen as the finite element approximation of $\mathpzc{v}$, defined in \eqref{eq:v_star}, within the space $\V(\T_{\Y})$, we have that \cite[Proposition 5.14]{CNOS2}: 
\begin{equation}
\label{eq:reliability_state_v}
 \| \nabla (\mathpzc{v} - \bar V) \|_{L^2(y^{\alpha},\C_{\Y})}  \lesssim \E_{V}(\bar{V},\bar{Z}; \N(\T_{\Omega})).
\end{equation}
Similarly,
\begin{equation}
\label{eq:reliability_state_p}
 \| \nabla (\mathpzc{p} - \bar P) \|_{L^2(y^{\alpha},\C_{\Y})}  \lesssim \E_{P}(\bar{P},\bar{V}; \N(\T_{\Omega})).
\end{equation}
These estimates are essential to prove the estimate \eqref{eq:control_error} below.

\EO{\begin{remark}[implementable geometric condition]\rm
It has been proven rather challenging to derive a posteriori error estimates on anisotropic meshes. To allow for graded meshes in $\Omega$ (needed to compensate geometric singularities and incompatible data) and anisotropic meshes in the $y$ variable, the results of \cite[Section 5.3]{CNOS2} assume the following implementable geometric condition
over $\Tr$: 
\begin{equation}
\label{condition}
\exists C_{\Tr} >0: \quad \T_{\Y} \in \Tr \implies
 h_{\Y} \leq C_{\Tr} h_{z'}  \quad \forall z' \in \T_{\Omega}.
\end{equation}
The term $h_{\Y}$ denotes the largest size among all the elements in the mesh $I_{\Y}$. This condition, \emph{that is fully implementable}, 
is specifically needed in the analysis that lead to \eqref{eq:reliability_state_v} and \eqref{eq:reliability_state_p}; more precisely in the inequality (5.18) of \cite{CNOS2}.
 \end{remark}
}

Define the errors $e_{V}: = \bar v - \bar V$, $e_{P}:= \bar p - \bar P$, $e_{Z} = \bar \rsf - \bar Z$, $e_{\Lambda} = \bar t - \bar \Lambda$, the vector $e = (e_V,e_P,e_{Z},e_{\Lambda})$, and the norm
\begin{equation}
 \label{eq:total_error}
\VERT e \VERT^2 :=  \| \nabla e_V \|^2_{L^2(y^{\alpha},\C_{\Y})}  +  \| \nabla e_P \|^2_{L^2(y^{\alpha},\C_{\Y})} + \| e_Z \|^2_{L^2(\Omega)} + \| e_{\Lambda} \|^2_{L^2(\Omega)}.
\end{equation}

\subsubsection{Reliability}

We obtain the global reliability of $\E_{\textrm{ocp}}$, defined in \eqref{eq:defofEocp}. The proof combines the arguments of \cite{AO:IMA} with elements of Sections \ref{subsec:subdifferentials} and \ref{subsec:auxiliary_variables}.

\begin{theorem}[global upper bound]
\label{thm:ideal_1}
Let $(\bar{v}, \bar{p}, \orsf, \bar t) \in \HL(y^{\alpha},\C_{\Y}) \times \HL(y^{\alpha},\C_{\Y}) \times \Zad \times L^{\infty}(\Omega)$ be the optimal variables associated to the truncated optimal control problem of Section \ref{subsec:truncated} and $(\bar{V},\bar{P},\bar{Z}, \bar \Lambda) \in \V(\T_{\Y}) \times \V(\T_{\Y}) \times \mathbb{Z}_{ad}(\T_{\Omega}) \times \mathbb{P}_0(\T_{\Omega})$ its numerical approximation as described in Section \ref{subsec:fully}. If \eqref{condition} holds, then
\begin{equation}
\label{eq:reliability}
 \VERT e \VERT \lesssim  \E_{\textrm{ocp}}(\bar{V},\bar{P},\bar{Z}, \bar{\Lambda}; \T_{\Y}),
\end{equation}
where the hidden constant is independent of the continuous and discrete optimal variables, the size of the elements in the meshes $\T_{\Omega}$ and $\T_{\Y}$, $\# \T_{\Omega}$, and $\# \T_{\Y}$.
\label{th:reliability}
\end{theorem}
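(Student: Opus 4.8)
The plan is to bound each of the four error components $\|\nabla e_V\|_{L^2(y^\alpha,\C_\Y)}$, $\|\nabla e_P\|_{L^2(y^\alpha,\C_\Y)}$, $\|e_Z\|_{L^2(\Omega)}$, and $\|e_\Lambda\|_{L^2(\Omega)}$ by the estimator $\E_{\mathrm{ocp}}$, exploiting the auxiliary variables $\mathpzc{v},\mathpzc{p},\tilde v,\tilde p,\tilde\rsf,\tilde\lambda$ as bridges between the continuous and discrete optimal quantities. The core of the argument is the control of $e_Z = \orsf - \bar Z$; once this is in hand, the remaining estimates follow by linearity of the state and adjoint equations together with \eqref{eq:reliability_state_v}--\eqref{eq:reliability_state_p} and the trace inequality \eqref{Trace_estimate}.

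First I would estimate $\|e_Z\|_{L^2(\Omega)}$ by testing the continuous variational inequality \eqref{eq:VI} with $\rsf = \bar Z \in \Zad$ and the auxiliary variational inequality \eqref{eq:key_VI} with $\rsf = \orsf \in \Zad$, then adding the two inequalities. This produces, after using the monotonicity of the subdifferential \eqref{eq:subdiff_monotone} applied to $\bar t \in \partial\psi(\orsf)$ and $\tilde\lambda \in \partial\psi(\tilde\rsf)$ (Lemma \ref{lemma:key_results}), a bound of the form
\begin{equation*}
\sigma \| \orsf - \tilde\rsf \|^2_{L^2(\Omega)} \lesssim (\tr\bar p - \tr\bar P, \tilde\rsf - \orsf)_{L^2(\Omega)} + \nu(\tilde\lambda - \bar t, \tilde\rsf - \orsf)_{L^2(\Omega)},
\end{equation*}
and the right-hand side is controlled by inserting $\mathpzc{p},\tilde p$ and invoking \eqref{eq:reliability_state_p} together with a standard duality/trace argument relating $\|\tr(\bar p - \mathpzc{p})\|$ and $\|\tr(\mathpzc{p}-\tilde p)\|$ to the data differences $\|\tr\bar v - \tr\bar V\|$, $\|\tr\tilde v - \tr\bar V\|$, which in turn feed back into $\E_V$ and $e_Z$ via \eqref{eq:reliability_state_v}. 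Combining with the triangle inequality $\|e_Z\|_{L^2(\Omega)} \le \|\orsf-\tilde\rsf\|_{L^2(\Omega)} + \|\tilde\rsf - \bar Z\|_{L^2(\Omega)}$ and recognizing $\|\tilde\rsf - \bar Z\|_{L^2(\Omega)} = \E_Z$ closes the loop, provided $\sigma$ absorbs the bridging constants; the remaining interface term involving $\tilde\lambda - \bar t$ is handled by the Lipschitz bound $|\tilde\lambda - \bar\Lambda| \lesssim \E_\Lambda$-type control plus, again, $\|\tr\bar p - \tr\bar P\|$.

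Next, for $\|\nabla e_V\|_{L^2(y^\alpha,\C_\Y)}$ I would write $\bar v - \bar V = (\bar v - \mathpzc{v}) + (\mathpzc{v} - \bar V)$; the second piece is bounded by $\E_V$ via \eqref{eq:reliability_state_v}, while the first solves a linear problem with right-hand side $\orsf - \bar Z = e_Z$, so by well-posedness of \eqref{eq:alpha_harm_weak} and the trace estimate it is controlled by $\|e_Z\|_{L^2(\Omega)}$, already bounded. The adjoint error $\|\nabla e_P\|_{L^2(y^\alpha,\C_\Y)}$ is treated symmetrically using $\mathpzc{p}$, \eqref{eq:reliability_state_p}, and the already-established bounds on $e_V$ (which drive the right-hand side of the adjoint equation through $\tr(\bar v - \bar V)$). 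Finally $\|e_\Lambda\|_{L^2(\Omega)} \le \|\bar t - \tilde\lambda\|_{L^2(\Omega)} + \|\tilde\lambda - \bar\Lambda\|_{L^2(\Omega)}$: the second term is exactly $\E_\Lambda$, and the first is controlled by the projection formulas \eqref{eq:projection_t} and \eqref{eq:auxiliary_variables}, since $\Pi_{[-1,1]}$ is Lipschitz with constant one, giving $\|\bar t - \tilde\lambda\|_{L^2(\Omega)} \le \nu^{-1}\|\tr\bar p - \tr\bar P\|_{L^2(\Omega)} \lesssim \|\nabla e_P\|_{L^2(y^\alpha,\C_\Y)} + \E_P$.

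\textbf{Main obstacle.} The delicate point is the bookkeeping of the constants in the $e_Z$ estimate: the bridging terms $\|\tr(\bar p - \mathpzc{p})\|$ and $\|\tr(\mathpzc{p}-\tilde p)\|$ must be re-expressed, through the trace constant $C_{\tr} \le d_s^{-1/2}$ and the stability of the adjoint problem, in a way that the resulting multiple of $\|e_Z\|_{L^2(\Omega)}$ can genuinely be absorbed into the left-hand side $\sigma\|\orsf-\tilde\rsf\|^2$ — this is precisely the step where one needs $\sigma$ to dominate a product of trace and stability constants, and it is here that the geometric condition \eqref{condition}, through \eqref{eq:reliability_state_v}--\eqref{eq:reliability_state_p}, enters essentially. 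Making this absorption rigorous, rather than circular, is the heart of the proof; everything else is linear PDE stability plus Lipschitz continuity of the projections.
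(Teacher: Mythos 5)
Your overall architecture (the auxiliary pair $(\tilde\rsf,\tilde\lambda)$, adding the variational inequalities \eqref{eq:VI} and \eqref{eq:key_VI}, bridging with $\mathpzc{p}$ and the reliability bounds \eqref{eq:reliability_state_v}--\eqref{eq:reliability_state_p}) coincides with the paper's, but the step you flag as the ``main obstacle'' is precisely where your argument fails and where the paper's key idea lives. In the paper one writes $\bar p - \bar P = (\bar p - \tilde p) + (\tilde p - \mathpzc{p}) + (\mathpzc{p} - \bar P)$, where $\tilde v$ and $\tilde p$ are the auxiliary solutions \eqref{def:tildev} and \eqref{eq:w_truncated} driven by $\tilde\rsf$, and the dangerous contribution $(\tr(\bar p - \tilde p), \tilde\rsf - \orsf)_{L^2(\Omega)}$ is \emph{not} estimated by Cauchy--Schwarz, trace, and stability at all: testing the equation satisfied by $\bar v - \tilde v$ with $\bar p - \tilde p$ and the one satisfied by $\bar p - \tilde p$ with $\bar v - \tilde v$ yields the identity $(\tr(\bar p - \tilde p), \tilde\rsf - \orsf)_{L^2(\Omega)} = -a_{\Y}(\bar v - \tilde v, \bar p - \tilde p) = -\|\tr(\bar v - \tilde v)\|_{L^2(\Omega)}^2 \leq 0$ (this is \eqref{eq:estimate_for_II_1}), so the term is simply discarded. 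The only remaining terms, $(\tr(\mathpzc{p} - \bar P), \tilde\rsf - \orsf)_{L^2(\Omega)}$ and $(\tr(\tilde p - \mathpzc{p}), \tilde\rsf - \orsf)_{L^2(\Omega)}$, are controlled by $\E_{P}$ and by $\|\tr(\tilde v - \bar V)\|_{L^2(\Omega)} \lesssim \E_{Z} + \E_{V}$, and Young's inequality with the free weight $\sigma/4$ absorbs them; no relation between $\sigma$ and trace or stability constants is ever required. Under your plan, by contrast, bounding $\|\tr(\bar p - \tilde p)\|_{L^2(\Omega)}$ through stability by $\|\tr(\bar v - \tilde v)\|_{L^2(\Omega)} \lesssim \|\orsf - \tilde\rsf\|_{L^2(\Omega)}$ produces a term $C\|\orsf - \tilde\rsf\|_{L^2(\Omega)}^2$ with $C$ a product of trace and stability constants, and absorption into $\sigma\|\orsf - \tilde\rsf\|_{L^2(\Omega)}^2$ needs $\sigma$ large; the theorem assumes nothing of the sort, so your route only delivers a conditional, strictly weaker statement. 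The missing state--adjoint duality identity is a genuine gap, not bookkeeping.

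Two smaller slips in the same step: to obtain your displayed inequality you must test \eqref{eq:VI} with $\rsf = \tilde\rsf$, not with $\rsf = \bar Z$ (with $\bar Z$ the increments do not pair up to produce $\|\orsf - \tilde\rsf\|_{L^2(\Omega)}^2$ on the left), and the subdifferential term that actually arises is $\nu(\bar t - \tilde\lambda, \tilde\rsf - \orsf)_{L^2(\Omega)}$, which the monotonicity \eqref{eq:subdiff_monotone} renders nonpositive so it is dropped outright; your version carries the opposite sign and then requires an unnecessary extra estimate. The remainder of your outline --- bounding $\|\nabla e_V\|_{L^2(y^{\alpha},\C_{\Y})}$ and $\|\nabla e_P\|_{L^2(y^{\alpha},\C_{\Y})}$ through $\mathpzc{v}$, $\mathpzc{p}$ and the already-controlled $e_Z$, and $\|e_{\Lambda}\|_{L^2(\Omega)}$ through the Lipschitz continuity of $\Pi_{[-1,1]}$ plus $\E_{\Lambda}$ --- does match the paper's Steps 4--5.
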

\begin{proof} We proceed in five steps.
 
Step 1. Applying the triangle inequality we immediately arrive at
% 
% With the definition \eqref{eq:defofEZ} of the local error indicator $\E_{Z}$ in mind, we define the auxiliary control $\tilde{\rsf} = \Pi (-\frac{1}{\mu} \tr \bar{P})$ and notice that it verifies 
% \begin{equation}
% \label{eq:VI_rtilde}
%  ( \tr \bar{P} + \mu \tilde{\rsf} , \rsf - \tilde{\rsf} )_{L^2(\Omega)} \geq 0 \quad \forall \rsf \in \Zad.
% \end{equation}
% Then, an application of the triangle inequality yields
\begin{equation}
\label{eq:r-Z-rtilde}
 \| \orsf - \bar{Z} \|^2_{L^2(\Omega)} \leq  2\| \orsf - \tilde{\rsf} \|^2_{L^2(\Omega)}  +  2\| \tilde{\rsf}- \bar{Z} \|^2_{L^2(\Omega)} =  2\| \orsf - \tilde{\rsf} \|^2_{L^2(\Omega)} + 2 \E_{Z}^2,
\end{equation}
where $\E_{Z} = \E_{Z}(\bar Z, \bar P; \T_{\Omega})$ corresponds to the error estimator defined in \eqref{eq:defofEZglobal} and $\tilde \rsf$ denotes the auxiliary variable defined in \eqref{eq:auxiliary_variables}. 

Step 2. The previous estimate reveals that it thus suffices to bound $\| \orsf - \tilde{\rsf} \|_{L^2(\Omega)}$. To accomplish this task, we set $\rsf = \tilde \rsf$ in \eqref{eq:VI} and $\rsf = \orsf$ in \eqref{eq:key_VI} and add the obtained inequalities to arrive at
\begin{equation}
\label{eq:Step_2}
\sigma \| \orsf - \tilde{\rsf} \|^2_{L^2(\Omega)} \leq (\tr(\bar{p} - \bar{P}), \tilde{\rsf} - \orsf )_{L^2(\Omega)} + \nu (\bar t - \tilde \lambda, \tilde{\rsf} - \orsf )_{L^2(\Omega)},
\end{equation}
where $\tilde \lambda$ is defined in \eqref{eq:auxiliary_variables}, $\bar t \in \partial \psi(\orsf)$, and $\bar{p}$ and $\bar{P}$ solve \eqref{eq:p_truncated} and \eqref{eq:P_discrete}, respectively. The results of Lemma \ref{lemma:key_results} yield $\tilde \lambda \in \partial \psi(\tilde \rsf)$. This, in view of \eqref{eq:subdiff_monotone}, implies that
\[
 (\bar t - \tilde \lambda, \tilde{\rsf} - \orsf )_{L^2(\Omega)} \leq 0,
\]
and thus that
\begin{equation}
\label{eq:Step_22}
\sigma \| \orsf - \tilde{\rsf} \|^2_{L^2(\Omega)} \leq (\tr(\bar{p} - \bar{P}), \tilde{\rsf} - \orsf )_{L^2(\Omega)}.
\end{equation}

Step 3. 
\EO{To control the right hand side of \eqref{eq:Step_22}, we utilize the auxiliary adjoint state $\mathpzc{p}$, defined in \eqref{eq:q_truncated}, and write $\bar{p} - \bar{P} = (\bar{p}-\mathpzc{p}) + (\mathpzc{p} - \bar{P})$. Thus,
\begin{equation}
\label{eq:muleq}
\sigma \| \orsf - \tilde{\rsf} \|^2_{L^2(\Omega)} \leq  (\tr(\bar{p} - \mathpzc{p}), \tilde{\rsf} - \orsf )_{L^2(\Omega)} + (\tr(\mathpzc{p}- \bar{P}), \tilde{\rsf} - \orsf )_{L^2(\Omega)} =: \mathrm{I} + \mathrm{II}.
\end{equation}
We bound $\textrm{II}$ in view of the trace estimate \eqref{Trace_estimate}, \eqref{eq:reliability_state_p}, and Young's inequality:
\begin{multline}
|\textrm{II}|  \lesssim \| \nabla( \mathpzc{p}- \bar{P} ) \|_{L^2(y^{\alpha},\C_{\Y})} \| \tilde{\rsf} - \orsf \|_{L^2(\Omega)} \lesssim \E_{P}(\bar{P},\bar{V};\N(\T_{\Omega}))  \| \tilde{\rsf} - \orsf \|_{L^2(\Omega)}
\\
 \leq \frac{\sigma}{4}  \| \tilde{\rsf} - \orsf \|^2_{L^2(\Omega)} + C \E_{P}^2(\bar{P},\bar{V};\N(\T_{\Omega})),
\label{eq:estimate_for_I}
\end{multline}
where $C$ denotes a positive constant. To bound $\mathrm{I}$, we invoke the auxiliary state $\tilde p$, defined in \eqref{eq:w_truncated}, and write $\bar{p} - \mathpzc{p} = ( \bar{p} - \tilde p ) + (\tilde p - \mathpzc{p} )$. We estimate each contribution to $\mathrm{I}$ separately. First, we observe that
\[
a_{\Y} (\bar{v} - \tilde{v},\phi_v) = (\orsf- \tilde{\rsf}, \tr \phi_v )_{L^2(\Omega)}, \quad a_{\Y} (\phi_p,\bar{p} - \tilde p) = (  \tr (\bar{v}-\tilde{v}), \tr \phi_p )_{L^2(\Omega)}
\]
for all $\phi_v$ and $\phi_p$ in $\HL(y^{\alpha},\C_{\Y})$, respectively.  Set $\phi_v = \bar p - \tilde p$ and $\phi_p = \bar v - \tilde v$. Thus
\begin{equation}
\label{eq:estimate_for_II_1}
 \mathrm{I}_1:= (\tr(\bar{p} - \tilde p), \tilde{\rsf} - \orsf )_{L^2(\Omega)} = - a_{\Y}  (\bar{v} - \tilde{v},\bar p -\tilde p) = - \| \tr (\bar{v} - \tilde{v}) \|^2_{L^2(\Omega)} \leq 0.
 \end{equation}
We now control $\mathrm{I}_2:= (\tr(\tilde p - \mathpzc{p}), \tilde{\rsf} - \orsf )_{L^2(\Omega)}$, where $\tilde p$ and $ \mathpzc{p}$ are defined in \eqref{eq:w_truncated} and \eqref{eq:q_truncated}, respectively. To accomplish this task, we observe that
\[
\tilde p - \mathpzc{p}: \quad a_{\Y} (\phi,\tilde p - \mathpzc{p}) = (  \tr (\tilde{v}-\bar{V}), \tr \phi )_{L^2(\Omega)} \quad \forall \phi \in \HL(y^{\alpha},\C_{\Y}).
\]
Consequently, \eqref{Trace_estimate} and a stability estimate for the previous problem yield
\begin{equation}
\label{eq:aux1}
 |\mathrm{I}_2| \lesssim \| \nabla ( \tilde p - \mathpzc{p} )  \|_{L^2(y^{\alpha},\C_{\Y})} \|\tilde{\rsf} - \orsf  \|_{L^2(\Omega)} \lesssim \|  \tr (\tilde{v}-\bar{V})  \|_{L^2(\Omega)} \|\tilde{\rsf} - \orsf  \|_{L^2(\Omega)}.
\end{equation}
This, \eqref{eq:muleq}, \eqref{eq:estimate_for_I}, and \eqref{eq:estimate_for_II_1} allow us to conclude that
\begin{equation}
\label{eq:new_step}
\sigma \| \orsf - \tilde{\rsf} \|^2_{L^2(\Omega)} \leq \frac{\sigma}{2} \| \orsf - \tilde{\rsf} \|^2_{L^2(\Omega)} + C_1\E_{P}^2(\bar{P},\bar{V};\N(\T_{\Omega})) + C_2 \|  \tr (\tilde{v}-\bar{V})  \|^2_{L^2(\Omega)},
\end{equation}
where $C_1$ and $C_2$ are positive constants. The control of $ \| \tr (\tilde{v}-\bar{V})  \|_{L^2(\Omega)}$ follows from the estimates
$
\|  \tr (\tilde{v}-\mathpzc{v})  \|_{L^2(\Omega)} \lesssim \E_{Z}(\bar{Z},\bar{P};\T_{\Omega})
$
and 
\[
\|  \tr (\mathpzc{v}-\bar{V})  \|_{L^2(\Omega)} \lesssim \| \nabla( \mathpzc{v}-\bar{V} ) \|_{L^2(y^{\alpha},\C_{\Y})} \lesssim \E_{V}(\bar{V},\bar{Z};\N(\T_{\Omega})). 
\]
Replacing the previous two estimates into \eqref{eq:new_step} and the obtained one into \eqref{eq:r-Z-rtilde},
we arrive at
\[
\| e_Z \|^2_{L^2(\Omega)} \lesssim \E^2_{V}(\bar{V},\bar{Z};\N(\T_{\Omega})) + \E^2_{P}(\bar{P},\bar{V};\N(\T_{\Omega}))+ \E^2_{Z}(\bar{Z},\bar{P};\T_{\Omega}).
\]
Similar arguments can be applied to bound $\| \nabla e_V\|_{L^2(y^{\alpha},\C_{\Y})} $ and $ \| \nabla e_P \|_{L^2(y^{\alpha},\C_{\Y})}$. We can thus arrive at the estimate}
\begin{equation}
\| \nabla e_V\|_{L^2(y^{\alpha},\C_{\Y})} + \| \nabla e_P \|_{L^2(y^{\alpha},\C_{\Y})} + \| e_Z \|_{L^2(\Omega)}  
\lesssim \E_{\textrm{ocp}}(\bar{V},\bar{P},\bar{Z},\bar{\Lambda}; \T_{\Y}). 
\label{eq:control_error}
\end{equation}

Step 4. We now control the error in the approximation of the subgradient, \ie $\| \bar t - \bar \Lambda \|_{L^2(\Omega)}$. To accomplish this task, we first apply the triangle inequality and write
\[
 \| \bar t - \bar \Lambda \|_{L^2(\Omega)} \leq \| \bar t - \tilde \lambda \|_{L^2(\Omega)} + \| \tilde \lambda - \bar \Lambda \|_{L^2(\Omega)}.
\]
We now use the definition of $\tilde \lambda$, given by \eqref{eq:auxiliary_variables}, and the fact that $\Pi_{[-1,1]}$, defined in \eqref{def:Pi}, is a \EO{Lipschitz-1 continuous} map, to arrive at
\begin{equation}
\begin{aligned}
\label{eq:subgradient_error}
 \| \bar t - \bar \Lambda \|^2_{L^2(\Omega)} 
 & \lesssim  \| \Pi_{[-1,1]} (-\tfrac{1}{\nu} \tr \bar p) - \Pi_{[-1,1]} (-\tfrac{1}{\nu} \tr \bar P) \|^2_{L^2(\Omega)} +  \E_{\Lambda}^2(\bar{\Lambda},\bar{P}; \T_{\Omega})
 \\
 &\lesssim  \| \tr( \bar p - \bar P) \|^2_{L^2(\Omega)} +  \E_{\Lambda}^2(\bar{\Lambda},\bar{P}; \T_{\Omega}) \lesssim \E_{\textrm{ocp}}(\bar{V},\bar{P},\bar{Z},\bar{\Lambda}; \T_{\Y}). 
 \end{aligned}
\end{equation}

Step 5. The desired estimate \eqref{eq:reliability} follows from \eqref{eq:control_error} %%\eqref{eq:state_error},\eqref{eq:adjoint_error}, 
and \eqref{eq:subgradient_error}. This concludes the proof.
\end{proof}

\subsubsection{Efficiency}

We now derive the local efficiency of the \emph{ideal} error estimator $\E_{\textrm{ocp}}$. In order to present the next result we define the constant $\mathfrak{C}(d_s,\sigma,\nu)$, that depends only on $d_s$, the regularization parameter $\sigma$, and the sparsity parameter $\nu$, by
\begin{equation}
\label{eq:C}
 \mathfrak{C}(d_s,\sigma,\nu) = \max \{  \EO{2} , \EO{d_s^{\frac{1}{2}}} + 1, d_s^{-\frac{1}{2}}( \nu^{-1} + 2 \sigma^{-1} + \EO{d_s^{\frac{1}{2}}}), 1 \}.
\end{equation}

\begin{theorem}[local lower bound]
\label{thm:ideal_2}
Let $(\bar{v}, \bar{p}, \orsf,\bar t) \in \HL(y^{\alpha},\C_{\Y}) \times \HL(y^{\alpha},\C_{\Y}) \times \Zad \times L^{\infty}(\Omega)$ be the optimal variables associated to the truncated optimal control problem of Section \ref{subsec:truncated} and $(\bar{V},\bar{P},\bar{Z},\bar \Lambda) \in \V(\T_{\Y}) \times \V(\T_{\Y}) \times \mathbb{Z}_{ad}(\T_{\Omega}) \times \mathbb{P}_0(\T_{\Omega})$ its numerical approximation as is described in Section \ref{subsec:fully}. If $z' \in \N(\T_{\Omega})$, then
\begin{multline}
\label{eq:efficiency}
 \E_{V}(\bar{V},\bar{Z}; \C_{z'} ) + \E_{P}(\bar{P},\bar{V}; \C_{z'} ) +  \E_{Z}(\bar{Z},\bar{P}; S_{z'}) + \E_{\Lambda}(\bar{\Lambda},\bar{P}; S_{z'}) \leq \mathfrak{C}(d_s,\sigma,\nu) 
 \\ 
 \cdot \left( \| \nabla e_{V} \|_{L^2(y^{\alpha},\C_{z'} )}  +  \| \nabla e_P \|_{L^2(y^{\alpha},\C_{z'})} + \| e_Z \|_{L^2(S_{z'})} + \| e_{\Lambda} \|_{L^2(S_{z'})}  \right),
\end{multline}
where $\mathfrak{C}(d_s,\sigma,\nu)$ depends only on $d_s$, $\sigma$, and $\nu$ and is defined in \eqref{eq:C}.
\label{th:global_efficiency}
\end{theorem}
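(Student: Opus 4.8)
The plan is to prove the four local lower bounds separately, starting with the cheap ones. For $\E_{\Lambda}$, I would use the definition $\E_{\Lambda}(\bar\Lambda,\bar P;S_{z'}) = \|\bar\Lambda - \tilde\lambda\|_{L^2(S_{z'})}$ together with the characterization $\tilde\lambda = \Pi_{[-1,1]}(-\tfrac1\nu\tr\bar P)$ from \eqref{eq:auxiliary_variables} and the corresponding projection formula $\bar t = \Pi_{[-1,1]}(-\tfrac1\nu\tr\bar p)$ for the continuous subgradient. Since $\Pi_{[-1,1]}$ is Lipschitz-1, we get $\|\tilde\lambda - \bar t\|_{L^2(S_{z'})} \le \tfrac1\nu\|\tr(\bar P - \bar p)\|_{L^2(S_{z'})}$; combining with a triangle inequality $\|\bar\Lambda - \tilde\lambda\| \le \|\bar\Lambda - \bar t\| + \|\bar t - \tilde\lambda\| = \|e_\Lambda\| + \|\bar t - \tilde\lambda\|$ and bounding the trace of $e_P = \bar p - \bar P$ on $S_{z'}$ by $\|\nabla e_P\|_{L^2(y^\alpha,\C_{z'})}$ via the local trace estimate \eqref{Trace_estimate_local} (with constant $d_s^{-1/2}$) yields the claim for the $\Lambda$-term. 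For $\E_Z = \|\bar Z - \tilde\rsf\|_{L^2(S_{z'})}$, I would argue similarly: by the projection formula \eqref{eq:projection_r}, $\bar\rsf = \Pi_{[\asf,\bsf]}(-\tfrac1\sigma(\tr\bar p + \nu\bar t))$, while $\tilde\rsf = \Pi_{[\asf,\bsf]}(-\tfrac1\sigma(\tr\bar P + \nu\tilde\lambda))$; Lipschitz-1 continuity of $\Pi_{[\asf,\bsf]}$ gives $\|\tilde\rsf - \bar\rsf\|_{L^2(S_{z'})} \le \tfrac1\sigma(\|\tr e_P\|_{L^2(S_{z'})} + \nu\|\tilde\lambda - \bar t\|_{L^2(S_{z'})})$, and then I reuse the bound just obtained for $\|\tilde\lambda - \bar t\|$ and the local trace estimate, plus $\|\bar Z - \tilde\rsf\| \le \|e_Z\| + \|\tilde\rsf - \bar\rsf\|$. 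Tracking constants carefully through these two steps is where the messy coefficients $d_s^{-1/2}(\nu^{-1} + 2\sigma^{-1} + d_s^{1/2})$ in \eqref{eq:C} come from.

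For the state and adjoint estimators $\E_V$ and $\E_P$, the local efficiency is essentially inherited from \cite{CNOS2}. I would recall that $\E_V(\bar V,\bar Z;\C_{z'}) = \|\nabla\zeta_{z'}\|_{L^2(y^\alpha,\C_{z'})}$ where $\zeta_{z'}$ solves the local problem \eqref{eq:ideal_local_problemV}; testing that equation with $\psi = \zeta_{z'}$ gives $\tfrac1{d_s}\|\nabla\zeta_{z'}\|^2_{L^2(y^\alpha,\C_{z'})} = a_{z'}(\zeta_{z'},\zeta_{z'}) = \langle\bar Z,\tr\zeta_{z'}\rangle - a_{z'}(\bar V,\zeta_{z'})$. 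Since $\mathpzc v$ (the auxiliary state \eqref{eq:v_star}) satisfies $a_\Y(\mathpzc v,\phi) = (\bar Z,\tr\phi)$, restricting to $\C_{z'}$ and extending $\zeta_{z'}$ by zero shows the right-hand side equals $a_{z'}(\mathpzc v - \bar V,\zeta_{z'})$, so by Cauchy–Schwarz $\|\nabla\zeta_{z'}\|_{L^2(y^\alpha,\C_{z'})} \le \|\nabla(\mathpzc v - \bar V)\|_{L^2(y^\alpha,\C_{z'})}$. It remains to compare $\mathpzc v$ with $\bar v$: since $a_\Y(\bar v - \mathpzc v,\phi) = (\bar\rsf - \bar Z,\tr\phi)_{L^2(\Omega)}$, a stability/trace estimate bounds $\|\nabla(\bar v - \mathpzc v)\|$ by $\|e_Z\|_{L^2(\Omega)}$; localized appropriately this contributes $\|e_Z\|_{L^2(S_{z'})}$. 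Hence $\E_V(\bar V,\bar Z;\C_{z'}) \lesssim \|\nabla e_V\|_{L^2(y^\alpha,\C_{z'})} + \|e_Z\|_{L^2(S_{z'})}$, with the constant $d_s^{1/2}+1$ of \eqref{eq:C}. The bound for $\E_P$ is analogous, comparing $\mathpzc p$ with $\bar p$ using \eqref{eq:q_truncated} and \eqref{eq:p_truncated}: here the discrepancy $a_\Y(\phi,\bar p - \mathpzc p) = (\tr(\bar v - \bar V),\tr\phi)_{L^2(\Omega)}$ introduces an extra $\|\tr e_V\|_{L^2(S_{z'})} \lesssim \|\nabla e_V\|_{L^2(y^\alpha,\C_{z'})}$ term, which is why the factor $2$ appears in \eqref{eq:C}.

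Finally I would add the four local bounds, collect the constants into a single $\mathfrak C(d_s,\sigma,\nu)$ as in \eqref{eq:C}, and absorb the cross-terms (e.g. $\|e_Z\|_{L^2(S_{z'})}$ appearing both on the left via $\E_Z$ and as an upper-bound contribution from $\E_V$) into the right-hand side of \eqref{eq:efficiency}, which already contains all four error components over $\C_{z'}$ and $S_{z'}$. The main obstacle I anticipate is bookkeeping rather than conceptual: the constant in \eqref{eq:C} is delicate, so one must be careful that every localization step in $\Omega$ (passing from $S_{z'}$ to the appropriate patch) does not inflate the constant beyond what is claimed, and that the local trace inequality \eqref{Trace_estimate_local} is applied with precisely the constant $d_s^{-1/2}$. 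A second subtlety is that the local efficiency of $\E_V,\E_P$ as stated in \cite{CNOS2} is for a pure source problem, so one needs the auxiliary variables $\mathpzc v,\mathpzc p$ precisely to reduce the optimal-control residuals to that setting — this is the one place where a little care beyond citation is required.
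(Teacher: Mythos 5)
Your treatment of $\E_{Z}$ and $\E_{\Lambda}$ coincides with the paper's argument (projection formulas \eqref{eq:projection_r} and \eqref{eq:projection_t}, Lipschitz-1 continuity of the projections, local trace estimate \eqref{Trace_estimate_local}), and your identity $d_s^{-1}\E_{V}^2(\bar V,\bar Z;\C_{z'})=a_{z'}(\mathpzc{v}-\bar V,\zeta_{z'})$ is correct. The genuine gap is in how you pass from $\mathpzc{v}$ (resp.\ $\mathpzc{p}$) back to $\bar v$ (resp.\ $\bar p$). The difference $\bar v-\mathpzc{v}$ solves a problem on all of $\C_{\Y}$ with datum $\orsf-\bar Z$ on $\Omega\times\{0\}$, so stability only yields $\|\nabla(\bar v-\mathpzc{v})\|_{L^2(y^{\alpha},\C_{\Y})}\lesssim\|e_Z\|_{L^2(\Omega)}$, a \emph{global} quantity; the asserted ``localized appropriately'' step has no justification, since the local energy of $\bar v-\mathpzc{v}$ on $\C_{z'}$ is not controlled by $\|e_Z\|_{L^2(S_{z'})}$ (data supported far from $S_{z'}$ still influences the solution on $\C_{z'}$). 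With this detour you can only conclude $\E_{V}(\bar V,\bar Z;\C_{z'})\lesssim\|\nabla e_V\|_{L^2(y^{\alpha},\C_{z'})}+\|e_Z\|_{L^2(\Omega)}$, which is a global, not local, lower bound, and the explicit constant of \eqref{eq:C} is lost. The same objection applies to your $\E_{P}$ step through $\mathpzc{p}$, where moreover the bound $\|\tr e_V\|_{L^2(S_{z'})}\lesssim\|\nabla e_V\|_{L^2(y^{\alpha},\C_{z'})}$ is not covered by \eqref{Trace_estimate_local}, which requires membership in $\W(\C_{z'})$, i.e.\ vanishing on $\partial\C_{z'}\setminus\Omega\times\{0\}$, a property $e_V$ does not have.

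The fix --- and the paper's actual proof --- avoids $\mathpzc{v},\mathpzc{p}$ altogether in the efficiency argument: since $\zeta_{z'}\in\W(\C_{z'})$, extended by zero, is an admissible test function in the truncated state equation \eqref{eq:alpha_harm_truncated}, one has $\langle\orsf,\tr\zeta_{z'}\rangle=a_{\Y}(\bar v,\zeta_{z'})=a_{z'}(\bar v,\zeta_{z'})$, whence $d_s^{-1}\E_{V}^2(\bar V,\bar Z;\C_{z'})=a_{z'}(\bar v-\bar V,\zeta_{z'})+\langle\bar Z-\orsf,\tr\zeta_{z'}\rangle$; both terms are automatically local because $\zeta_{z'}$ is supported in $\C_{z'}$ and $\tr\zeta_{z'}$ in $S_{z'}$, and applying \eqref{Trace_estimate_local} to $\zeta_{z'}$ gives exactly \eqref{eq:efficiency_V} with the constants entering \eqref{eq:C}; the bound for $\E_{P}$ follows by testing \eqref{eq:p_truncated} with $\chi_{z'}$ in the same way. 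So your closing remark that the auxiliary variables $\mathpzc{v},\mathpzc{p}$ are needed to reduce to the source-problem setting is inverted: those variables are used in the reliability proof, while for the local lower bound the comparison must be made directly with $\bar v,\bar p$.
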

\begin{proof} We proceed in five steps.

Step 1. We study the efficiency properties of the local indicator $\E_{V}$ defined in \eqref{eq:defofEV}. Let $z' \in \N(\T_{\Omega})$. Since $\zeta_{z'} \in \W(\C_{z'})$ solves \eqref{eq:ideal_local_problemV} and $\langle \orsf, \tr\zeta_{z'} \rangle = a_{\Y}( \bar v, \zeta_{z'}) = a_{z'}( \bar v, \zeta_{z'})$, which follows from setting $\phi = \zeta_{z'}$ in \eqref{eq:alpha_harm_truncated}, we obtain that
% We invoke the fact that $\zeta_{z'}$ solves the local problem \eqref{eq:ideal_local_problemV} and conclude that
\begin{align}
\nonumber
\EO{d_s^{-1}}\E_{V}^2(\bar V, \bar Z; \C_{z'})  = a_{z'}( \zeta_{z'}, \zeta_{z'}) & = \langle \orsf, \tr\zeta_{z'} \rangle + \langle \bar Z - \orsf, \tr\zeta_{z'} \rangle - a_{z'}( \bar V, \zeta_{z'})
\\
& = a_{z'}( \bar v - \bar V, \zeta_{z'}) + \langle \bar Z - \orsf, \tr\zeta_{z'} \rangle.
\label{eq:efficiency_V_before}
\end{align}
Use now the local version of the trace estimate \eqref{Trace_estimate_local} with $C_{\tr} \leq d_s^{-\frac{1}{2}}$ and the definition of the local bilinear form $a_{z'}$, given by \eqref{eq:a_local}, to conclude that
\begin{align*}
\EO{d_s^{-1}}\E_{V}^2(\bar V, \bar Z; \C_{z'}) 
% & \leq d_s^{-1}\|  \nabla e_{V} \|_{L^2(y^{\alpha},\C_{z'})}\| \nabla \zeta_{z'}\|_{L^2(y^{\alpha},\C_{z'})} + \| \orsf -\bar{Z} \|_{L^2(S_{z'})} \| \tr \zeta_{z'}\|_{L^2(S_{z'})}
%  \\
%  & 
 \leq \left( d_s^{-1}\|  \nabla e_{V} \|_{L^2(y^{\alpha},\C_{z'})} + d_s^{-\frac{1}{2}}\| e_{Z} \|_{L^2(S_{z'})} \right) \|  \nabla \zeta_{z'}\|_{L^2(y^{\alpha},\C_{z'})},
\end{align*}
which, on the basis of \eqref{eq:defofEV}, immediately yields the local efficiency of $\E_{V}$:
\begin{equation}
\label{eq:efficiency_V}
\E_{V} (\bar V, \bar Z; \C_{z'}) \leq \|  \nabla e_{V} \|_{L^2(y^{\alpha},\C_{z'})} + \EO{d_s^{\frac{1}{2}}} \| e_{Z} \|_{L^2(S_{z'})}.
\end{equation}

Step 2. Similar arguments to the ones that lead to \eqref{eq:efficiency_V_before} reveal that
\[
 \EO{d_s^{-1}}\E_{P}^2(\bar P, \bar V; \C_{z'}) =  a_{z'}( \chi_{z'},e_{P}) - \langle \tr e_{V}, \tr \chi_{z'}  \rangle,
\]
where $\chi_{z'} \in \W(\C_{z'})$ solves \eqref{eq:ideal_local_problemP}. Thus,
\begin{equation}
\label{eq:efficiency_P}
\E_{P} (\bar P, \bar V; \C_{z'}) \leq  \| \nabla e_{P} \|_{L^2( y^{\alpha}, \C_{z'})} + \|  \nabla e_{V} \|_{L^2(y^{\alpha},\C_{z'})}.
\end{equation}

Step 3. We now derive local efficiency properties for the indicator $\E_{\Lambda}$, given by \eqref{eq:defofELambdaglobal}. To accomplish this task, we invoke the projection formula \eqref{eq:projection_t} for $\bar t$ and the definition of $\tilde \lambda$, given by \eqref{eq:auxiliary_variables}, to obtain, for $z' \in \N(\T_{\Omega})$, that
\begin{equation*}
  \E_{\Lambda}(\bar{\Lambda},\bar{P}; S_{z'}) \leq \| \bar \Lambda - \bar t   \|_{L^2(S_{z'})} +  \| \Pi_{[-1,1]} ( -\tfrac{1}{\nu} \tr \bar p) -  \Pi_{[-1,1]} ( - \tfrac{1}{\nu} \tr \bar P)   \|_{L^2(S_{z'})}.
\end{equation*}
This, in view of the \EO{Lipschitz-1 continuity} of the operator $\Pi_{[-1,1]}$ and the local trace estimate \eqref{Trace_estimate_local}, implies that
\begin{equation}
  \E_{\Lambda}(\bar{\Lambda},\bar{P}; S_{z'}) 
%   \leq 
%   \| e_{\Lambda}  \|_{L^2(S_{z'})} +  \tfrac{1}{\nu}\| \tr  e_{P} \|_{L^2(K)} 
  \leq \| e_{\Lambda}  \|_{L^2(S_{z'})} +   \nu^{-1} d_s^{-\frac{1}{2}}\| \nabla e_P \|_{L^2(y^{\alpha},\C_{z'})}.
  \label{eq:efficiency_Lambda}
\end{equation}

Step 4. In this step we study the efficiency properties of $\E_{Z}$, which is defined by \eqref{eq:defofEZglobal}. Let $z' \in \N(\T_{\Omega})$. Invoke the projection formula for $\bar \rsf$ and the definition of $\tilde \rsf$, given by \eqref{eq:projection_r} and \eqref{eq:auxiliary_variables}, respectively, to arrive at
\begin{multline*}
 \E_{Z}(\bar{Z},\bar{P}; S_{z'}) \leq  \| \bar Z - \bar \rsf \|_{L^2(S_{z'})} 
 \\
 + \left \| 
  \Pi_{[\asf,\bsf]} \left[ -\tfrac{1}{\sigma} \left( \tr \bar p +\nu \bar t  \right) \right] - 
  \Pi_{[\asf,\bsf]} \left[ -\tfrac{1}{\sigma} \left( \tr \bar P +\nu \tilde \lambda \right) \right] 
 \right\|_{L^2(S_{z'})},
\end{multline*}
which, in view of the \EO{Lipschitz-1 continuity} of $\Pi_{[\asf,\bsf]}$, implies that
\begin{equation*}
 \E_{Z}(\bar{Z},\bar{P}; S_{z'}) \leq \| \bar Z - \bar \rsf \|_{L^2(S_{z'})} + \frac{1}{\sigma} \| \tr (\bar p - \bar P) \|_{L^2(S_{z'})} + \frac{\nu}{\sigma} \| \bar t - \tilde \lambda \|_{L^2(S_{z'})}.
\end{equation*}
We now invoke \eqref{Trace_estimate_local} and the fact that $\tilde \lambda = \Pi_{[-1,1]}(-\frac{1}{\nu} \tr \bar P)$ to arrive at
\begin{equation}
\E_{Z}(\bar{Z},\bar{P}; S_{z'}) \leq \| e_{Z}  \|_{L^2(S_{z'})}  + 2 \sigma^{-1} d_s^{-\frac{1}{2}} \| \nabla e_P \|_{L^2(y^{\alpha},\C_{z'})}.
\label{eq:efficiency_Z}
\end{equation}

Step 5. We conclude the proof of the local efficiency property \eqref{eq:efficiency} by collecting the estimates \eqref{eq:efficiency_V}, \eqref{eq:efficiency_P}, \eqref{eq:efficiency_Lambda}, and \eqref{eq:efficiency_Z}.
\end{proof}

\begin{remark}[Local efficiency of each contribution]\rm
The arguments elaborated in the proof of Theorem \ref{th:global_efficiency} reveal that each contribution $\E_{V}$, $\E_{P}$, $\E_{\Lambda}$, and $\E_{Z}$ is locally efficient; see estimates \eqref{eq:efficiency_V}--\eqref{eq:efficiency_Z}. In addition, the constants involved in such efficiency results are \emph{fully computable} and depend only on $d_s$, $\nu$, and $\sigma$.
 \end{remark}

%%%%%%%%%%%%%%%%%%%%%%%%%%%%%%%%%%%%%%%%%%%%%%%%%%%%%%%%%%%%%%%%%%%%%%%%%%%%%%%%%%%%%%%%%%%%%%%%%%%%%%%%%%%%%%%%%%%%%%%%%%%%%%%%%%%%%%%%%%%%%%%%%%%%%%%%%%%%%%%%%%%%%%%%%%%
\section{A computable a posteriori error estimator}
\label{subsec:computable_a_posteriori}
%%%%%%%%%%%%%%%%%%%%%%%%%%%%%%%%%%%%%%%%%%%%%%%%%%%%%%%%%%%%%%%%%%%%%%%%%%%%%%%%%%%%%%%%%%%%%%%%%%%%%%%%%%%%%%%%%%%%%%%%%%%%%%%%%%%%%%%%%%%%%%%%%%%%%%%%%%%%%%%%%%%%%%%%%%%

The results of Theorems \ref{thm:ideal_1} and \ref{thm:ideal_2} show that $\E_{ocp}$ is globally reliable and locally efficient \EO{with no oscillation terms.} Consequently, $\E_{ocp}$ is equivalent to the error $\VERT e \VERT$. However, it has an insurmountable drawback: it requires, for each node $z'$, the computation of the contributions $\E_{V}$ and $\E_{P}$ that in turn require the resolution of the local problems \eqref{eq:ideal_local_problemV} and \eqref{eq:ideal_local_problemP}, respectively. Since these problems are posed on the infinite--dimensional space $\W(\C_{z'})$, the error estimator \eqref{eq:defofEocp} is not computable. In spite of this fact, it provides the intuition to define an anisotropic and computable posteriori error estimator.

Let us begin our discussion with the following definition.

\begin{definition}[discrete local spaces]
\label{def:discrete_spaces}
For $z' \in \N(\T_\Omega)$, we define
\begin{align*}
  \mathcal{W}(\C_{z'}) = & 
    \left\{
      W \in C^0( \overline{\C_{z'}} ): W|_T \in \mathcal{P}_2(K) \otimes \mathbb{P}_2(I) \ \forall T = K \times I \in \C_{z'}, \right. \\
    &\left.  W|_{\partial \C_{z'} \setminus \Omega \times \{ 0\} } = 0
    \right\}.
\end{align*}
If $K$ is a quadrilateral, $\mathcal{P}_2(K)$ stands for $\mathbb{Q}_2(K)$. When $K$ is a simplex, $\mathcal{P}_2(K)$ corresponds to $\mathbb{P}_2(K) \oplus \mathbb{B}(K)$, where $\mathbb{B}(K)$ stands for the space spanned by a local 
%cubic 
bubble function \EO{\ie $\mathbb{B}(K) = \mathrm{span}(b)$, where 
$
b = (n+1)^{n+1} \lambda_1 \dots \lambda_{n+1}
$
and $\{ \lambda_1, \dots, \lambda_{n+1}\}$ are the barycentric coordinates of $K$.}
\end{definition}

With the discrete space $\mathcal{W}(\C_{z'})$ at hand, we define the discrete functions $\eta_{z'}$ and $\theta_{z'}$ as follows:
\begin{equation}
\label{eq:ideal_local_problemV_computable}
\eta_{z'} \in \mathcal{W}(\C_{z'}): 
\quad 
a_{z'}( \eta_{z'}, W)  = \langle \bar{Z}, \tr W  \rangle  -  a_{z'}(\bar{V}, W)
\end{equation}
for all $W \in \Wcal(\C_{z'})$ and
\begin{equation}
\label{eq:ideal_local_problemP_computable}
\theta_{z'} \in \mathcal{W}(\C_{z'}): 
\quad 
a_{z'}( W,\theta_{z'} )= \langle \tr \bar{V} - \usf_d, \tr W \rangle  - a_{z'}( W,\bar{P} )
\end{equation}
for all $W \in \Wcal(\C_{z'})$. Notice that problems \eqref{eq:ideal_local_problemV_computable} and \eqref{eq:ideal_local_problemP_computable} correspond to the Galerkin discretization of problems \eqref{eq:ideal_local_problemV} and \eqref{eq:ideal_local_problemP}, respectively.

We define the computable counterpart of $\E_{\mathrm{ocp}}$, defined in \eqref{eq:defofEocp}, as follows:
\begin{multline}
 \mathcal{E}^2_{\textrm{ocp}}(\bar{V},\bar{P},\bar{Z},\bar{\Lambda}; \T_{\Y}) = \mathcal{E}^2_{V}(\bar{V},\bar{Z}; \N(\T_{\Omega})) + \mathcal{E}^2_{P}(\bar{P},\bar{V}; \N(\T_{\Omega})) 
 \\
 + \mathcal{E}^2_{Z}(\bar{Z},\bar{P}; \T_{\Omega}) + \mathcal{E}^2_{\Lambda}(\bar{\Lambda},\bar{P}; \T_{\Omega}).
  \label{eq:defofEocp_computable}
\end{multline}
We recall that $\bar{V}$, $\bar{P}$, $\bar{Z}$, and $\bar{\Lambda}$ denote the optimal variables associated to the fully discrete optimal control problem of Section \ref{subsec:fully}. 

We now describe each contribution in \eqref{eq:defofEocp_computable}. First, we define the local error indicators and error estimator associated to the state equation by
\begin{equation}
\label{eq:defofEV_computable_local} 
\mathcal{E}_{V}^2(\bar{V},\bar{Z}; \C_{z'}) = \| \nabla \eta_{z'} \|^2_{L^2(y^{\alpha},\C_{z'})}
\end{equation}
and 
\begin{equation}
\label{eq:defofEV_computable_global} 
\mathcal{E}_{V}^2(\bar{V},\bar{Z}; \N(\T_{\Omega})) =  \sum_{z' \in \N(\T_{\Omega})} \mathcal{E}_{V}^2(\bar{V},\bar{Z}; \C_{z'}),
\end{equation}
respectively. 
% and the global error estimator 
% $
% E_{V}(\bar{V},\bar{Z}; \N(\T_{\Omega})) := \left( \sum_{z' \in \N(\T_{\Omega})} E_{V}^2(\bar{V},\bar{Z}; \C_{z'}) \right)^{\frac{1}{2}}.
% $ 
Second, we define the local error indicators and error estimator associated to the discretization of the adjoint equation as
% The second contribution in \eqref{eq:defofEocp_computable}, that is associated to the discretization of the adjoint equation, is defined similarly. We define the local error indicators and error estimator as
\begin{equation}
\label{eq:defofEP_computable_local} 
\mathcal{E}_{P}^2(\bar{P},\bar{V}; \C_{z'}) = \| \nabla \theta_{z'} \|^2_{L^2(y^{\alpha},\C_{z'})}
\end{equation}
and \begin{equation}
\label{eq:defofEP_computable_global} 
\mathcal{E}_{P}^2(\bar{P},\bar{V}; \N( \T_{\Omega} ) ) = \sum_{z' \in \N( \T_{\Omega} )} \mathcal{E}_{P}^2(\bar{P},\bar{V}; \C_{z'}),
\end{equation}
respectively. The third and fourth contributions, $\mathcal{E}_{Z}$ and $\mathcal{E}_{\Lambda}$, are defined exactly as in \eqref{eq:defofEZglobal} and \eqref{eq:defofELambdaglobal}, respectively.

\subsection{Efficiency}

The next result shows the local efficiency of the computable a posteriori error estimator $\mathcal{E}_{\textrm{ocp}}$. 

\begin{theorem}[local lower bound]
Let $(\bar{v}, \bar{p}, \orsf, \bar t) \in \HL(y^{\alpha},\C_{\Y}) \times \HL(y^{\alpha},\C_{\Y}) \times \Zad \times L^{\infty}(\Omega)$ be the optimal variables associated to the truncated optimal control problem of Section \ref{subsec:truncated} and $(\bar{V},\bar{P},\bar{Z}, \bar{\Lambda}) \in \V(\T_{\Y}) \times \V(\T_{\Y}) \times \mathbb{Z}_{ad}(\T_{\Omega}) \times \mathbb{P}_0(\T_{\Omega})$ its numerical approximation as described in Section \ref{subsec:fully}. If $z' \in \N(\T_{\Omega})$, then
\begin{multline}
\label{eq:efficiency_computable}
 \mathcal{E}_{V}(\bar{V},\bar{Z}; \C_{z'} ) + \mathcal{E}_{P}(\bar{P},\bar{V}; \C_{z'} ) +  \mathcal{E}_{Z}(\bar{Z},\bar{P}; S_{z'} ) + \mathcal{E}_{\Lambda}(\bar{\Lambda},\bar{P}; S_{z'} )  
 \leq \mathfrak{C}(d_s,\sigma,\nu) 
 \\
 \cdot \left( \| \nabla e_{V} \|_{L^2(y^{\alpha},\C_{z'})}  +  \| \nabla e_{P} \|_{L^2(y^{\alpha},\C_{z'})} + \| e_{Z} \|_{L^2(S_{z'})} + \| e_{\Lambda} \|_{L^2(S_{z'})} \right),
\end{multline}
where $\mathfrak{C}(d_s,\sigma,\nu)$ depends only on $d_s$, $\sigma$ and $\nu$ and is defined as in \eqref{eq:C}.
\label{th:global_efficiency_computable}
\end{theorem}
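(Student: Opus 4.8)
The plan is to reduce the efficiency of the computable estimator $\mathcal{E}_{\mathrm{ocp}}$ to the already-established efficiency of the ideal estimator $\E_{\mathrm{ocp}}$ from Theorem \ref{th:global_efficiency}. The contributions $\mathcal{E}_{Z}$ and $\mathcal{E}_{\Lambda}$ coincide with $\E_{Z}$ and $\E_{\Lambda}$ by construction, so the estimates \eqref{eq:efficiency_Lambda} and \eqref{eq:efficiency_Z} carry over verbatim. Thus the only new work is to bound $\mathcal{E}_{V}(\bar V,\bar Z;\C_{z'}) = \|\nabla\eta_{z'}\|_{L^2(y^{\alpha},\C_{z'})}$ and $\mathcal{E}_{P}(\bar P,\bar V;\C_{z'}) = \|\nabla\theta_{z'}\|_{L^2(y^{\alpha},\C_{z'})}$, where $\eta_{z'}$ and $\theta_{z'}$ solve the \emph{finite-dimensional} Galerkin problems \eqref{eq:ideal_local_problemV_computable} and \eqref{eq:ideal_local_problemP_computable} posed on $\mathcal{W}(\C_{z'}) \subset \W(\C_{z'})$.

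First I would treat the state indicator. Since $\mathcal{W}(\C_{z'}) \subset \W(\C_{z'})$ and $\eta_{z'}$ is the $a_{z'}$-Galerkin projection of the residual defining $\zeta_{z'}$, Galerkin orthogonality gives $\|\nabla\eta_{z'}\|_{L^2(y^{\alpha},\C_{z'})} \leq \|\nabla\zeta_{z'}\|_{L^2(y^{\alpha},\C_{z'})}$, i.e. $\mathcal{E}_{V}(\bar V,\bar Z;\C_{z'}) \leq \E_{V}(\bar V,\bar Z;\C_{z'})$. Concretely, test \eqref{eq:ideal_local_problemV_computable} with $W = \eta_{z'}$, use that $\langle\orsf,\tr\eta_{z'}\rangle = a_{z'}(\bar v,\eta_{z'})$ from \eqref{eq:alpha_harm_truncated} (valid since $\eta_{z'} \in \W(\C_{z'})$ extends by zero to a test function in $\HL(y^{\alpha},\C_{\Y})$), and proceed exactly as in \eqref{eq:efficiency_V_before}–\eqref{eq:efficiency_V} with $\zeta_{z'}$ replaced by $\eta_{z'}$; the local trace estimate \eqref{Trace_estimate_local} then yields
\[
\mathcal{E}_{V}(\bar V,\bar Z;\C_{z'}) \leq \|\nabla e_{V}\|_{L^2(y^{\alpha},\C_{z'})} + d_s^{\frac{1}{2}}\|e_{Z}\|_{L^2(S_{z'})}.
\]
The adjoint indicator $\mathcal{E}_{P}$ is handled identically using \eqref{eq:ideal_local_problemP_computable} and mimicking Step 2 of the proof of Theorem \ref{th:global_efficiency}, giving $\mathcal{E}_{P}(\bar P,\bar V;\C_{z'}) \leq \|\nabla e_{P}\|_{L^2(y^{\alpha},\C_{z'})} + \|\nabla e_{V}\|_{L^2(y^{\alpha},\C_{z'})}$.

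Finally I would collect the four bounds — the two just derived together with \eqref{eq:efficiency_Lambda} and \eqref{eq:efficiency_Z} for $\mathcal{E}_{\Lambda} = \E_{\Lambda}$ and $\mathcal{E}_{Z} = \E_{Z}$ — and sum them, absorbing the various $d_s$-, $\sigma$-, $\nu$-dependent factors into the single constant $\mathfrak{C}(d_s,\sigma,\nu)$ of \eqref{eq:C}, exactly as in Step 5 of the proof of Theorem \ref{th:global_efficiency}. I do not anticipate a genuine obstacle here: the only point requiring a word of care is that $\eta_{z'}$ and $\theta_{z'}$, being discrete, must still be legitimate test functions for the continuous problems \eqref{eq:alpha_harm_truncated} and \eqref{eq:p_truncated} after extension by zero outside $\C_{z'}$ — this holds because functions in $\mathcal{W}(\C_{z'})$ vanish on $\partial\C_{z'}\setminus\Omega\times\{0\}$, so the argument identifying $\langle\orsf,\tr\eta_{z'}\rangle$ with $a_{z'}(\bar v,\eta_{z'})$ goes through unchanged. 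Everything else is a verbatim repetition of the ideal case, the conformity $\mathcal{W}(\C_{z'}) \subset \W(\C_{z'})$ making the discrete indicators no larger than their ideal counterparts.
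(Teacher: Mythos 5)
Your proposal is correct and follows essentially the same route as the paper: the paper likewise notes that $\mathcal{E}_{Z}$ and $\mathcal{E}_{\Lambda}$ coincide with their ideal counterparts so \eqref{eq:efficiency_Z} and \eqref{eq:efficiency_Lambda} apply verbatim, and for $\mathcal{E}_{V}$ and $\mathcal{E}_{P}$ it tests \eqref{eq:ideal_local_problemV_computable} and \eqref{eq:ideal_local_problemP_computable} with $\eta_{z'}$ and $\theta_{z'}$ themselves and repeats the argument of \eqref{eq:efficiency_V_before}--\eqref{eq:efficiency_P} together with \eqref{Trace_estimate_local}, exactly as you describe. Your additional observation that conformity $\mathcal{W}(\C_{z'})\subset\W(\C_{z'})$ and Galerkin orthogonality give $\mathcal{E}_{V}\leq\E_{V}$ and $\mathcal{E}_{P}\leq\E_{P}$ directly is a valid (and slightly slicker) shortcut to the same bounds.
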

\begin{proof}
\EO{The local efficiency properties of the estimators $\mathcal{E}_{\Lambda}$ and $\mathcal{E}_{Z}$ follow from \eqref{eq:efficiency_Lambda} and \eqref{eq:efficiency_Z}, respectively; we recall that $\E_{\Lambda} = \mathcal{E}_{\Lambda}$ and $\E_{Z} = \mathcal{E}_Z$. To derive the efficiency of $\mathcal{E}_{V}$, we notice that, for $z' \in \N(\T_{\Omega})$,
\[
d_s^{-1}E_{V}^2(\bar V, \bar Z; \C_{z'}) = a_{z'}( \eta_{z'}, \eta_{z'}) = 
% \langle \orsf, \eta_{z'}\rangle 
a_{z'}( \bar v - \bar V, \eta_{z'}) + \langle \bar Z - \orsf, \eta_{z'}\rangle .
\]
Invoke \eqref{Trace_estimate_local} and \eqref{eq:defofEV_computable_local} to conclude. The local efficiency of $\mathcal{E}_{P}$ follows similar arguments. For brevity, we skip details.}
% 
% The proof of the estimate \eqref{eq:efficiency_computable} repeats the arguments developed in the proof of Theorem \ref{th:global_efficiency}. We analyze the local efficiency of the indicator $E_V$ defined in \eqref{eq:defofEV_computable}. To do this, we let $z' \in \N(\T_{\Omega})$. Employing the fact that $\eta_{z'}$ solves problem \eqref{eq:ideal_local_problemV_computable} and recalling that $\orsf$ denotes the optimal control, we arrive at 
% \[
% E_{V}^2(\bar V, \bar Z; \C_{z'}) = a_{z'}( \eta_{z'}, \eta_{z'}) = \langle \orsf, \eta_{z'}\rangle + \langle \bar Z - \orsf, \eta_{z'}\rangle - a_{z'}( \bar V, \eta_{z'}).
% \]
% Invoking the trace estimate \eqref{Trace_estimate_local} with constant $C_{\tr} \leq d_s^{-\frac{1}{2}}$, the fact that $\bar v$ solves problem \eqref{eq:alpha_harm_truncated}
% and the Cauchy-Schwarz inequality, we obtain
% % Define $e_V = \bar{v} - \bar{V}$, 
% \[
% E_{V}^2(\bar V, \bar Z; \C_{z'}) \leq \left( d_s^{-1}\|  \nabla (\bar v - \bar V)\|_{L^2(y^{\alpha},\C_{z'})} + d_s^{-\frac{1}{2}}\| \orsf -\bar{Z} \|_{L^2(S_{z'})} \right) \|  \nabla \eta_{z'}\|_{L^2(y^{\alpha},\C_{z'})},
% \]
% which, in light of \eqref{eq:defofEV_computable}, immediately yields the desired result
% \begin{equation*}
% \label{eq:EV_locally_efficient}
% E_{V}(\bar V, \bar Z; \C_{z'}) \leq d_s^{-1}\|  \nabla (\bar v - \bar V)\|_{L^2(y^{\alpha},\C_{z'})} + d_s^{-\frac{1}{2}}\| \orsf -\bar{Z} \|_{L^2(S_{z'})}.
% \end{equation*}
% 
% The efficiency analysis for the terms $E_{P}$ and $E_{Z}$ follow similar arguments. For brevity we skip the proof.
\end{proof}

\begin{remark}[strong efficiency]
\rm
The lower bound \eqref{eq:efficiency_computable} does not involve any oscillation term and, in addition, is fully computable since $\mathfrak{C}(d_s,\sigma,\nu)$ is known. These properties imply a strong concept of efficiency: the relative size of the local error indicator dictates mesh refinement, which is independent of fine structure of the data. 
\end{remark}

\subsection{Reliability}

We now analyze the reliability properties of $\mathcal{E}_{ocp}$. To accomplish this task, we introduce, for $z' \in \N(\T_\Omega)$,
the local \emph{oscillation} of $f \in L^2(\Omega)$:
\begin{equation}
\label{eq:defoflocosc}
  \osc(f; S_{z'}) = h_{z'}^{s} \| f - f_{z'} \|_{L^2(S_{z'})}, \quad f_{z'}|_K = \fint_{K} f,
\end{equation}
where $h_{z'} = \min\{h_{K}: K \ni z' \}$. We also define the global oscillation of $f$ as
\begin{equation}
\label{eq:defofglobosc}
  \osc^2(f;\T_\Omega) = \sum_{z' \in \N(\T_\Omega)} \osc^2( f; S_{z'} ).
\end{equation}
With these definitions at hand, we define, 
% for $D = (\usf_d, \tr \bar V)$, we define the local oscillation of $D$ as
% \begin{equation}
% \label{eq:defofgloboscD}
%   \osc(D ;S_{z'}) = \osc(\usf_d ;S_{z'})
%   + \osc(\tr \bar V ;S_{z'}).
% %   + \osc(\bar Z ;S_{z'}).
% \end{equation}
% The global oscillation term $\osc(D ; \T_{\Omega})$ is defined accordingly. We now define, 
for $z' \in \N(\T_{\Omega})$, the \emph{total error indicator}
\begin{equation}
\label{total_error}
\mathfrak{E}^2(\bar V, \bar P, \bar Z, \bar \Lambda ; \C_{z'}) =  \mathcal{E}^2_{\textrm{ocp}}(\bar V, \bar P, \bar Z, \bar \Lambda; \C_{z'}) + \osc^2(\tr \bar{V} - \usf_d  ; S_{z'}),
\end{equation}
which will be essential in the module \textsf{\textup{MARK}} of the AFEM of Section \ref{subsec:design}. 

Let $\mathscr{K}_{\T_{\Omega}} = \{ S_{z'} : z' \in \N(\T_\Omega)\}$. We set $\mathscr{K}_{\Y} = \mathscr{K}_{\T_{\Omega}} \times (0,\Y)$ and, for any $\mathscr{M} \subset \mathscr{K}_{\T_{\Omega}}$, $\mathscr{M}_{\Y} = \mathscr{M} \times (0,\Y)$. Define 
\begin{equation}\label{total_est}
\mathfrak{E}^2( \bar V, \bar P, \bar Z, \bar \Lambda; \mathscr{M}_{\Y} ) = 
 \sum_{S_{z'} \in \mathscr{M} } \mathfrak{E}^2(\bar V, \bar P, \bar Z, \bar \Lambda; \C_{z'} ),
\end{equation}
where, we recall that $\C_{z'} = S_{z'} \times (0,\Y)$. 

\begin{remark}[Marking]\rm
We follow \cite[Remark 4.4]{MR2875241} and comment that, in the AFEM of Section \ref{sec:numerics} we will utilize the total error indicator \eqref{total_error}, namely the sum of the computable error estimator and the oscillation term, for marking. The oscillation cannot be removed for marking without further assumptions. We refer the reader to \cite{MR2875241} for a complete discussion on this matter. 
\end{remark}

\EO{We now apply the results of \cite[Theorem 5.37]{CNOS2} to obtain a posteriori error estimates that will be of importance in what follows. Let us assume that \eqref{condition} and \cite[Conjecture 5.28]{CNOS2} hold, then, since $\bar V$ corresponds to the finite element approximation of $\mathpzc{v}$, defined in \eqref{eq:v_star}, within the space $\V(\T_{\Y})$, we have 
\begin{equation}
\label{eq:aposteriori_mathpzcv}
 \| \nabla (\mathpzc{v} - \bar V) \|_{L^2(y^{\alpha},\C_{\Y})}  \lesssim \mathcal{E}_{V}(\bar{V},\bar{Z}; \N(\T_{\Omega})) +  \osc( \bar{Z};\T_{\Omega}) = \mathcal{E}_{V}(\bar{V},\bar{Z}; \N(\T_{\Omega})).
\end{equation}
Notice that the oscillation of $\bar Z$ vanishes since it is a piecewise constant function on $\T_{\Omega}$.
Similarly, we have that}
\begin{equation}
\label{eq:aposteriori_mathpzcp}
 \| \nabla (\mathpzc{p} - \bar P) \|_{L^2(y^{\alpha},\C_{\Y})}  \lesssim \mathcal{E}_{P}(\bar{P},\bar{V}; \N(\T_{\Omega})) +  \osc(\tr \bar{V} - \usf_d;\T_{\Omega}).
\end{equation}

\EO{The following comments are in order. First, the condition \eqref{condition} is fully implementable and it is indeed implemented in the the module \textsf{\textup{REFINEMENT}} of our AFEM of Section \ref{subsec:design}. Second, the reliability estimate of \cite[Theorem 5.37]{CNOS2} relies on the existence of a linear operator $\mathcal{M}_{z'}:\mathbb{W}(\C_{z'}) \rightarrow \mathcal{W}(\C_{z'})$ such that satisfies the conditions stated in \cite[Conjecture 5.28]{CNOS2}. The construction of this operator is an open problem. Numerical experiments provide computational evidence that the upper bound \eqref{eq:aposteriori_mathpzcp} is valid without the term $\osc(\tr \bar{V} - \usf_d;\T_{\Omega})$ and thus indirect evidence of the existence of $\mathcal{M}_{z'}$ satisfying the properties stated in \cite[Conjecture 5.28]{CNOS2}.}

% \begin{remark}[Conjecture]\rm
% The proof of Theorem \ref{th:reliability_computable} is valid under the assumption that there exists an operator $\mathcal{M}_{z'}$ that verifies the conditions stipulated in \cite[Conjecture 5.28]{CNOS2}. The construction of $\mathcal{M}_{z'}$ is an open problem. The numerical experiments performed for the state equation and the ones that we perform in Section \ref{sec:numerics} provide computational evidence that the bound \eqref{eq:reliability_computable} holds with no oscillation terms and thus indirect evidence of the existence of the aforementioned operator $\mathcal{M}_{z'}$ with the requisite properties \cite[(5.29)--(5.31)]{CNOS2}.
% \end{remark}

We now derive the global reliability of the computable error estimator $\mathcal{E}_{ocp}$. 
%%The proof of such a result follows the same arguments to the ones elaborated in the proof of Theorem \ref{thm:ideal_1}. The main difference is that the computable error indicators \eqref{eq:defofEV_computable_local}--\eqref{eq:defofEV_computable_global} and \eqref{eq:defofEP_computable_local}--\eqref{eq:defofEP_computable_global} are used instead of \eqref{eq:defofEV} and \eqref{eq:defofEP}, respectively. In what follows we provide a brief argument that reveals the role of the oscillation term $\osc(\tr \bar{V} - \usf_d  ; \T_{\Omega})$.

\begin{theorem}[global upper bound]
Let $(\bar{v}, \bar{p}, \orsf, \bar t) \in \HL(y^{\alpha},\C_{\Y}) \times \HL(y^{\alpha},\C_{\Y}) \times \Zad \times L^{\infty}(\Omega)$ be the optimal variables associated to the truncated optimal control problem of Section \ref{subsec:truncated} and $(\bar{V},\bar{P},\bar{Z}, \bar{\Lambda}) \in \V(\T_{\Y}) \times \V(\T_{\Y}) \times \mathbb{Z}_{ad}(\T_{\Omega}) \times \mathbb{P}_0(\T_{\Omega})$ its numerical approximation as described in Section \ref{subsec:fully}. If \eqref{condition} and \cite[Conjecture 5.28]{CNOS2} hold, then
\begin{equation} 
 \VERT e \VERT 
 \lesssim \mathfrak{E}(\bar{V},\bar{P},\bar{Z}, \bar \Lambda; \mathscr{K}_{\Y}),
 \label{eq:reliability_computable}
\end{equation}
where the hidden constant is independent of the continuous and discrete optimal variables, the size of the elements in the meshes $\T_{\Omega}$ and $\T_{\Y}$, $\# \T_{\Omega}$, and $\# \T_{\Y}$.
\label{th:reliability_computable}
\end{theorem}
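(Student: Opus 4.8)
The plan is to mirror the proof of the ideal reliability estimate in Theorem~\ref{th:reliability}, replacing the ideal ingredients \eqref{eq:reliability_state_v} and \eqref{eq:reliability_state_p} by their computable counterparts \eqref{eq:aposteriori_mathpzcv} and \eqref{eq:aposteriori_mathpzcp}, and then tracking the extra data oscillation term $\osc(\tr\bar V - \usf_d;\T_\Omega)$ that enters through the adjoint estimate. First, I would reproduce Steps~1--2 of the ideal proof verbatim: the triangle inequality gives $\|\orsf - \bar Z\|_{L^2(\Omega)}^2 \leq 2\|\orsf - \tilde\rsf\|_{L^2(\Omega)}^2 + 2\mathcal{E}_Z^2$, and testing \eqref{eq:VI} with $\rsf = \tilde\rsf$ and \eqref{eq:key_VI} with $\rsf = \orsf$, together with the monotonicity \eqref{eq:subdiff_monotone} of $\partial\psi$ and Lemma~\ref{lemma:key_results}, yields $\sigma\|\orsf - \tilde\rsf\|_{L^2(\Omega)}^2 \leq (\tr(\bar p - \bar P), \tilde\rsf - \orsf)_{L^2(\Omega)}$. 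None of this uses any regularity of the estimator, so it transfers unchanged.

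Second, I would split $\bar p - \bar P = (\bar p - \mathpzc{p}) + (\mathpzc{p} - \bar P)$ exactly as in \eqref{eq:muleq}. The term $\mathrm{I}$ (involving $\bar p - \mathpzc{p}$) is handled identically to the ideal case: inserting $\tilde p$, the sub-term $\mathrm{I}_1$ is $\leq 0$ by the self-adjoint structure \eqref{eq:estimate_for_II_1}, and $\mathrm{I}_2$ is bounded by $\|\tr(\tilde v - \bar V)\|_{L^2(\Omega)}\|\tilde\rsf - \orsf\|_{L^2(\Omega)}$, which is then controlled by $\mathcal{E}_Z$ plus $\|\nabla(\mathpzc{v}-\bar V)\|_{L^2(y^\alpha,\C_\Y)}$ and hence, via \eqref{eq:aposteriori_mathpzcv}, by $\mathcal{E}_Z + \mathcal{E}_V$ (the oscillation of $\bar Z$ vanishing). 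The term $\mathrm{II}$ is where the difference appears: using the trace estimate \eqref{Trace_estimate}, the computable bound \eqref{eq:aposteriori_mathpzcp}, and Young's inequality,
\begin{equation*}
|\mathrm{II}| \lesssim \| \nabla(\mathpzc{p} - \bar P) \|_{L^2(y^\alpha,\C_\Y)} \|\tilde\rsf - \orsf\|_{L^2(\Omega)} \leq \tfrac{\sigma}{4}\|\tilde\rsf - \orsf\|_{L^2(\Omega)}^2 + C\left( \mathcal{E}_P^2 + \osc^2(\tr\bar V - \usf_d;\T_\Omega) \right).
\end{equation*}
Absorbing the $\|\tilde\rsf - \orsf\|_{L^2(\Omega)}^2$ terms on the left, I obtain a bound for $\|e_Z\|_{L^2(\Omega)}$ by $\mathcal{E}_V + \mathcal{E}_P + \mathcal{E}_Z + \osc(\tr\bar V - \usf_d;\T_\Omega)$; the analogous argument for $\|\nabla e_V\|$ and $\|\nabla e_P\|$ (again inserting $\mathpzc{v}$, $\mathpzc{p}$ and using \eqref{eq:aposteriori_mathpzcv}, \eqref{eq:aposteriori_mathpzcp}) produces the same right-hand side, which is precisely $\mathfrak{E}(\bar V,\bar P,\bar Z,\bar\Lambda;\mathscr{K}_\Y)$ up to the constant. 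Finally, the subgradient error $\|\bar t - \bar\Lambda\|_{L^2(\Omega)}$ is handled exactly as in Step~4 of the ideal proof: the triangle inequality, the definition of $\tilde\lambda$, Lipschitz-$1$ continuity of $\Pi_{[-1,1]}$, and the trace estimate reduce it to $\|\tr(\bar p - \bar P)\|_{L^2(\Omega)} + \mathcal{E}_\Lambda$, which is already controlled.

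The main obstacle is not conceptual but bookkeeping: one must verify that the data oscillation term $\osc(\tr\bar V - \usf_d;\T_\Omega)$ introduced by \eqref{eq:aposteriori_mathpzcp} is exactly the one packaged into the total error indicator $\mathfrak{E}$ in \eqref{total_error}--\eqref{total_est}, and that no further oscillation of $\bar Z$ or of the state data survives (it does not, since $\bar Z$ is piecewise constant). A secondary point worth stating explicitly is that the whole argument is conditional on the hypotheses \eqref{condition} and \cite[Conjecture 5.28]{CNOS2}, which are exactly what is needed to invoke \eqref{eq:aposteriori_mathpzcv} and \eqref{eq:aposteriori_mathpzcp} in the first place; outside of that, the proof is a routine transcription of Theorem~\ref{th:reliability} with the ideal estimators swapped for the computable ones.
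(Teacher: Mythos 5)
Your proposal is correct and follows essentially the same route as the paper: the paper likewise reruns Steps 1--3 of Theorem~\ref{th:reliability}, replaces \eqref{eq:reliability_state_v}--\eqref{eq:reliability_state_p} by the computable bounds \eqref{eq:aposteriori_mathpzcv}--\eqref{eq:aposteriori_mathpzcp} (noting that the oscillation of $\bar Z$ vanishes), absorbs the $\tfrac{\sigma}{4}\|\tilde\rsf-\orsf\|_{L^2(\Omega)}^2$ term, and controls the subgradient error via \eqref{eq:subgradient_error}, so the extra $\osc(\tr\bar V-\usf_d;\T_\Omega)$ is exactly the term packaged into $\mathfrak{E}$.
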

\begin{proof}
\EO{We first estimate $\mathrm{II} = (\tr(\mathpzc{p} - \bar P), \tilde \rsf - \bar \rsf )_{L^2(\Omega)}$. Invoke \eqref{eq:aposteriori_mathpzcp} to arrive at
\begin{align*}
|\mathrm{II}| & \lesssim 
\| \nabla( \mathpzc{p}- \bar{P} ) \|_{L^2(y^{\alpha},\C_{\Y})} \| \tilde{\rsf} - \orsf \|_{L^2(\Omega)} 
\\
& \lesssim \left[ \mathcal{E}_{P}(\bar{P},\bar{V}; \N(\T_{\Omega})) +  \osc(\tr \bar{V} - \usf_d;\T_{\Omega}) \right]  \| \tilde{\rsf} - \orsf \|_{L^2(\Omega)}
\\
& \leq \frac{\sigma}{4}  \| \tilde{\rsf} - \orsf \|^2_{L^2(\Omega)} + C \left[ \mathcal{E}_{P}^2(\bar{P},\bar{V}; \N(\T_{\Omega})) +  \osc^2(\tr \bar{V} - \usf_d;\T_{\Omega}) \right],
\end{align*}
where $C$ denotes a positive constant. We now write $\bar p - \mathpzc{p}  = (\bar p - \tilde p) + (\tilde p - \mathpzc{p} )$ and note that \eqref{eq:estimate_for_II_1} yields $\mathrm{I}_1:=(\tr(\bar{p} - \tilde p), \tilde{\rsf} - \orsf )_{L^2(\Omega)} \leq 0$. We now control $\mathrm{I}_2:=(\tr(\tilde p - \mathpzc{p}), \tilde{\rsf} - \orsf )_{L^2(\Omega)}$. In view of \eqref{eq:aux1}, we have
$
 | \mathrm{I}_2 | \lesssim  \|  \tr (\tilde{v}-\bar{V})  \|_{L^2(\Omega)} \|\tilde{\rsf} - \orsf  \|_{L^2(\Omega)}.
$
Now, a stability estimate and \eqref{eq:defofEZglobal} yield
$
\|  \tr (\tilde{v}-\mathpzc{v})  \|_{L^2(\Omega)} \lesssim \mathcal{E}_{Z}(\bar{Z},\bar{P};\T_{\Omega}).
$
On the other hand, an application of \eqref{eq:aposteriori_mathpzcv} reveals that
\[
\|  \tr (\mathpzc{v}-\bar{V})  \|_{L^2(\Omega)} \lesssim \| \nabla( \mathpzc{v}-\bar{V} ) \|_{L^2(y^{\alpha},\C_{\Y})} \lesssim \mathcal{E}_{V}(\bar{V},\bar{Z};\N(\T_{\Omega})).
\]
We thus replace the derived estimates for $\mathrm{I_1}$, $\mathrm{I}_2$, and $\mathrm{II}$ into \eqref{eq:muleq} and the obtained one into \eqref{eq:r-Z-rtilde} to arrive at}
\begin{multline}
\EO{\| e_{Z} \|^2_{L^2(\Omega)} \lesssim \mathcal{E}^2_{V}(\bar{V},\bar{Z};\N(\T_{\Omega})) + \mathcal{E}^2_{P}(\bar{P},\bar{V};\N(\T_{\Omega}))}
\\
+ \mathcal{E}^2_{Z}(\bar{Z},\bar{P};\T_{\Omega}) +\osc^2(\tr \bar{V} - \usf_d;\T_{\Omega}).
\end{multline}

\EO{Similar arguments can be used to control $\| \nabla e_V\|_{L^2(y^{\alpha},\C_{\Y})} $ and $ \| \nabla e_P \|_{L^2(y^{\alpha},\C_{\Y})}$. The control of $\| \nabla e_{\Lambda}\|_{L^2(\Omega)} $ follows from \eqref{eq:subgradient_error}. This concludes the proof.}
\end{proof}

\section{Numerical Experiments}
\label{sec:numerics}

In this section we explore the computational performance of the computable a posteriori error estimator $\mathcal{E}_{ocp}$ with a series of numerical examples. To accomplish this task, we start, in the next section, with the design of an AFEM that is based on iterations of the loop \eqref{eq:loop}. 

\subsection{Design of the AFEM}
\label{subsec:design}
We describe the four modules in \eqref{eq:loop}:

\begin{enumerate}
 \item[$\bullet$] \textsf{\textup{SOLVE}:} Given the anisotropic mesh $\T_{\Y} \in \Tr$, we compute the unique solution $(\bar{V},\bar{P},\bar{Z},\bar \Lambda) \in \V(\T_{\Y})\times \V(\T_{\Y})\times \mathbb{Z}_{ad}(\T_{\Omega}) \times \mathbb{P}_0(\T_{\Omega})$ to the fully discrete optimal control problem of Section \ref{subsec:fully}:
 \[
  (\bar V,\bar P ,\bar Z ,\bar \Lambda) = \textsf{\textup{SOLVE}}(\T_{\Y}).
 \]
The problem is solved by using the active set strategy of \cite[Algorithm 2]{MR2556849}.

\item[$\bullet$] \textsf{\textup{ESTIMATE}:} With the discrete solution at hand, we compute, for each $z' \in \N(\T_{\Omega})$, the local error indicator
\begin{align*}
 \mathcal{E}_{\textrm{ocp}}(\bar{V},\bar{P},\bar{Z}, \bar{\Lambda}; \C_{z'}) = \mathcal{E}_{V}(\bar{V},\bar{Z}; \C_{z'}) & + \mathcal{E}_{P}(\bar{P},\bar{V}; \C_{z'}) 
 \\
 & + \mathcal{E}_{Z}(\bar{Z},\bar{P}; S_{z'}) + \mathcal{E}_{\Lambda}(\bar{\Lambda},\bar{P}; S_{z'}).
\end{align*}
The local indicators $\mathcal{E}_V$, $\mathcal{E}_P$, $\mathcal{E}_Z$, and $\mathcal{E}_{\Lambda}$ are defined by \eqref{eq:defofEV_computable_local}, \eqref{eq:defofEP_computable_local}, \eqref{eq:defofEZglobal}, and \eqref{eq:defofELambdaglobal}, respectively. Once the local indicator $\mathcal{E}_{\textrm{ocp}}$ is obtained, we compute the local oscillation term
$
 \osc(\tr \bar{V} - \usf_d  ; S_{z'})
$
and then construct the total error indicator $\mathfrak{E}$, defined in \eqref{total_error}:
\begin{equation}
\label{eq:estimate}
\left\{ \mathfrak{E} (\bar V, \bar P, \bar Z, \bar{\Lambda}; S_{z'}) \right\}_{ S_{z'} \in \mathscr{K}_{\T_\Omega}} = \textsf{\textup{ESTIMATE}}(\bar V, \bar P, \bar Z, \bar \Lambda; \T_{\Y}),
 \end{equation}
where $\mathscr{K}_{\T_{\Omega}} = \{ S_{z'} : z' \in \N(\T_\Omega)\}$. Notice that, for notational convenience, we have replaced $\C_{z'}$ by $S_{z'}$ in \eqref{eq:estimate}.

\item[$\bullet$] \textsf{\textup{MARK}:} We follow the numerical evidence presented in \cite[Section 6]{KRS} and use the \emph{maximum strategy} as a marking technique for our AFEM: Given $\theta \in [0,1]$ and the set of indicators obtained in the previous step, we select the set
\[
  \mathscr{M} = \textsf{\textup{MARK}} \left( \left\{  \mathfrak{E} (\bar V, \bar P, \bar Z, \bar \Lambda; S_{z'}) \right\}_{ S_{z'} \in \mathscr{K}_{\T_\Omega}}\right) \subset \mathscr{K}_{\T_{\Omega}},
\]
that contains the stars $S_{z'}$ in $\mathscr{K}_{\T_{\Omega}}$ that satisfies
\[
  \mathfrak{E} ( \bar V, \bar P, \bar Z, \bar \Lambda; S_{z'})  \geq \theta \mathfrak{E}_{\max} (\bar V, \bar P, \bar Z,\bar \Lambda),
\]
where $\mathfrak{E}_{\max} (\bar V, \bar P, \bar Z,\bar \Lambda):= \max\{ \mathfrak{E} ( \bar V, \bar P, \bar Z, \bar \Lambda; S_{z'}): S_{z'} \in \mathscr{K}_{\T_{\Omega}} \}$.

\item[$\bullet$] \textsf{\textup{REFINEMENT}:} We bisect all the elements $K \in \T_{\Omega}$ that are contained in $\mathscr{M}$ with the newest--vertex bisection method \cite{NSV:09} and create a new mesh $\T_\Omega'$. In the truncated optimal control problem, we choose the truncation parameter $\Y$ as $\Y = 1 + (1/3)\log(\# \T_{\Omega}')$. As a consequence of this election, the approximation and truncation errors are balanced \cite[Remark 5.5]{NOS}. Next, we generate a new mesh in the extended dimension, which we denote by $\mathcal{I}_\Y'$. The latter is constructed on the basis of \eqref{eq:graded_mesh}: the number of degrees of freedom $M$ is chosen to be a sufficiently large number in order to guarantee condition \eqref{condition}. This is achieved by first designing a partition $\mathcal{I}_\Y'$ with $M \approx (\# \T_\Omega')^{1/n}$ and checking \eqref{condition}. If this condition is not satisfied, we increase the number of mesh points until we obtain the desired result. Consequently,
\[
  \T_\Y' = \textsf{\textup{REFINE}}(\mathscr{M}), \qquad \T_{\Y}':= \T_\Omega'  \otimes \mathcal{I}_\Y'.
\]
\end{enumerate}

\subsection{Implementation}
The proposed AFEM is implemented within the MATLAB software library {\it{i}}FEM~\cite{Chen.L2008c}. The stiffness matrices associated to the finite element approximation of the state and adjoint equations are assembled exactly, while the forcing terms, in both equations, are computed with the help of a quadrature formula that is exact for polynomials of degree $4$. The optimality system associated to the fully discrete control problem of Section \ref{subsec:fully} is solved by using the active--set strategy of \cite[Algorithm 2]{MR2556849}.

To compute $\eta_{z'}$, defined in \eqref{eq:ideal_local_problemV_computable}, we follow \cite[Section 6]{CNOS2}: we loop around each node $z' \in \N(\T_{\Omega})$ and collect data about the cylindrical star $\C_{z'}$. We thus assemble the small linear system in \eqref{eq:ideal_local_problemV_computable} and solve it with the built-in \emph{direct solver} of MATLAB. The integrals that involve the weight $y^{\alpha}$ and discrete functions are computed exactly, while the ones that also involve data functions are computed element--wise by a quadrature formula that is exact for polynomials of degree 7. The computation of $\theta_{z'}$, defined in \eqref{eq:ideal_local_problemP_computable}, is similar.

In the \textsf{\textup{MARK}} step, we modify the estimator from star--wise to element--wise. To do this, we first scale the nodal--wise estimator as $\E_{z'}^2 / (\# S_{z'} )$ and then, compute
\[
\E_{K}^2 := \sum_{z'\in K}\E_{z'}^2, \quad K\in \T_{\Omega}.
\]
Consequently, we have that
$\sum _{K \in \T_{\Omega} }\E_{K}^2 = \sum _{z' \in \N(\T_\Omega) } \E_{z'}^2$. 
The cell--wise data oscillation is thus defined as
\[
\osc_{K}(f)^2 := h_{K}^{2s}\|f - \bar f_K\|^2_{L^2(K)}, 
\]
where $\bar f_K = |K|^{-1} \int_K f \diff x'$. The data oscillation is  
computed using a quadrature formula that is exact for polynomials of degree 7.

\begin{remark}[condition \eqref{condition}]
\rm
 \EO{Unless specifically mentioned, all computations are done without explicitly enforcing the mesh restriction \eqref{condition}. This provides experimental evidence to support the fact that \eqref{condition} is nothing but an artifact in our proofs. Nevertheless, computations, that we do not present here, show that optimality is still retained if one imposes \eqref{condition}.}
\end{remark}

\subsection{L--shaped domain with incompatible data}

The a priori theory of \cite{OS:Sparse} relies on the assumptions that $\Omega$ is convex and $\usf_d$ belongs to $\Ws$; the latter implies that $\usf_d$ vanishes on $\partial \Omega$ for $s \in (0,\tfrac{1}{2}]$. Let us thus consider a case where these conditions are not met:

\begin{enumerate}
 \item [$\bullet$] The regularization and sparsity parameters are $\sigma =  0.1$ and $\nu = 0.5$ when $s \leq \tfrac{1}{2}$ and $\sigma =  0.05$ and $\nu = 0.1$ when $s>\tfrac{1}{2}$.
 \item [$\bullet$] The control bounds $\asf$ and $\bsf$ are such that $\asf = -0.3$ and $\bsf = 0.3$.
 \item [$\bullet$] The domain $\Omega = (-1,1)^2 \setminus (0,1) \times (-1,0)$ is non-convex.
 \item [$\bullet$] The desired state is $\usf_d = 1$. Notice that $\usf_d$ is not compatible, in the sense that $\usf_d \notin \Ws$ for $s \in (0,\tfrac{1}{2}]$. This has as a consequence that the singular behavior \eqref{eq:u_singular_at_the_boundary}--\eqref{eq:u_singular_at_the_boundary_1/2} for the optimal adjoint variable $\bar \psf$ is expected.
 \item [$\bullet$] The parameter $\theta$ that governs the module \textsf{\textup{MARK}} is $\theta = 0.5$.
\end{enumerate}

The results of Figure \ref{fig:computational_rates_L} show that the AFEM proposed in Section \ref{subsec:design} delivers optimal experimental rates of convergence for the total error estimator $\mathfrak{E}$ and all choices of the parameter $s$ considered. 

\begin{figure}[ht]
  \begin{center}
  \includegraphics[width=0.5\textwidth]{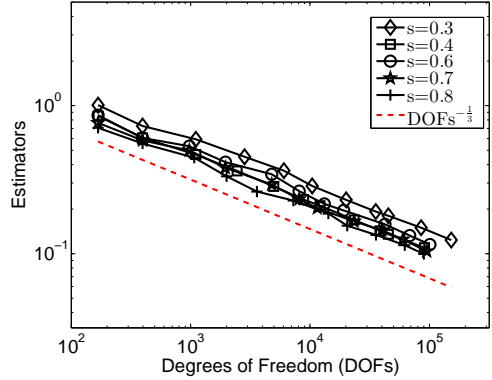}
  \end{center}
  \caption{Computational rates of convergence for our anisotropic AFEM with incompatible desired data $\usf_d$ and a non-convex domain $\Omega= (-1,1)^2 \setminus (0,1) \times (-1,0)$: $n=2$ and $s \in \{0.3,0.4,0.6,0.7,0.8 \}$. The panel shows the decrease of the total error indicator $\mathfrak{E}$ with respect to the number of degrees of freedom (DOFs). In all the presented cases the optimal rate $(\# \T_{\Y})^{-1/3}$ is achieved.}
\label{fig:computational_rates_L}
\end{figure}

In Figure \ref{fig:separated_rates} we present, for $s=0.3$ (left) and $s=0.9$ (right), the computational rates of convergence for each of the four contributions of the computable error estimator $\mathcal{E}_{ocp}$: $\mathcal{E}_{V}$, $\mathcal{E}_{P}$, $\mathcal{E}_Z$, and $\mathcal{E}_{\Lambda}$. It can be observed that, in both cases, each contribution decays with the optimal rate $\# (\T_{\Y})^{-1/3}$. 
%%%In Figure \ref{fig:meshes} we present the adaptive meshes obtained by our AFEM after 11 iterations and the corresponding finite element approximations of the optimal control variable $\ozsf$. 
\begin{figure}[ht]
  \begin{center}
  \includegraphics[width=0.45\textwidth]{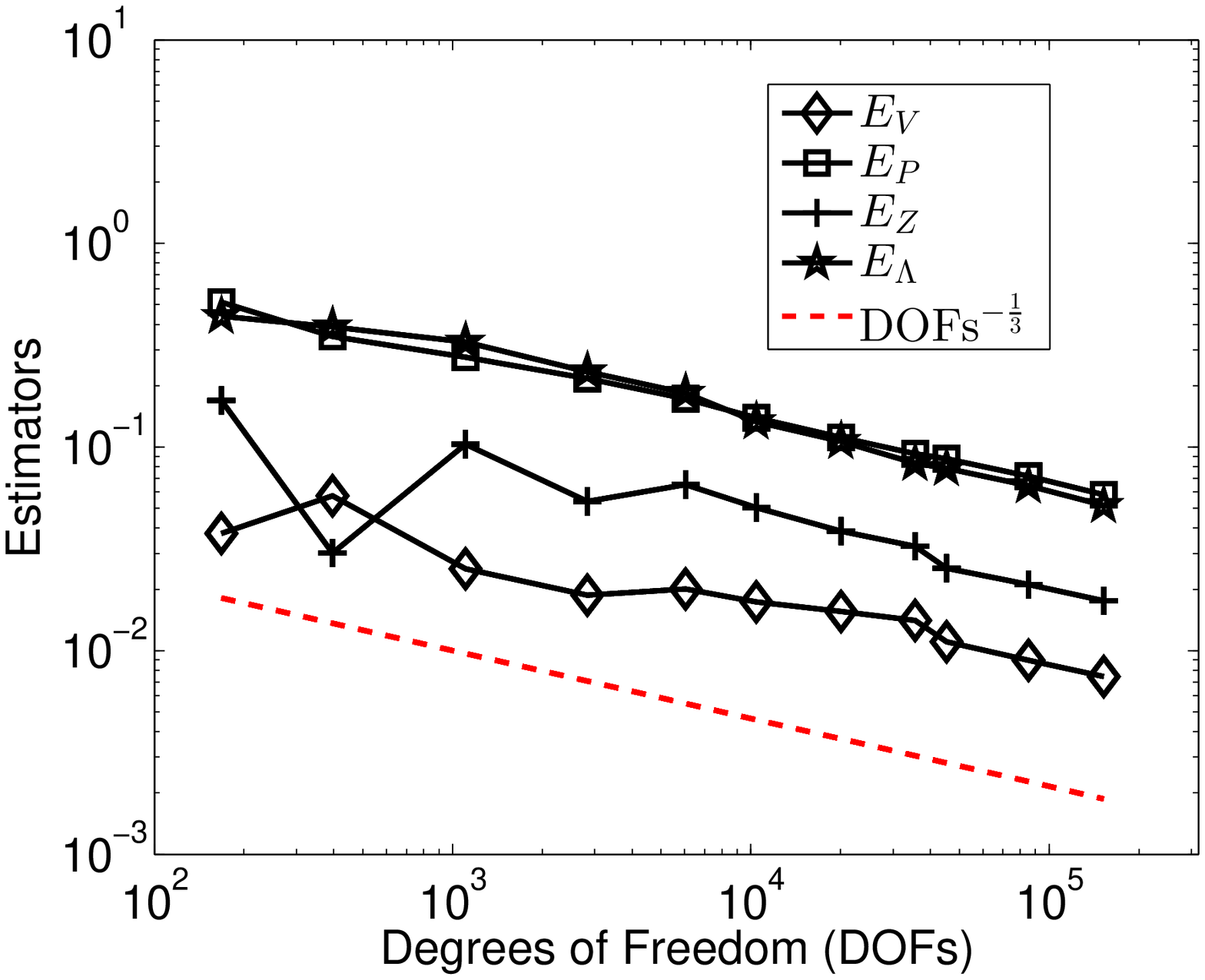}
  \hfil
  \includegraphics[width=0.45\textwidth]{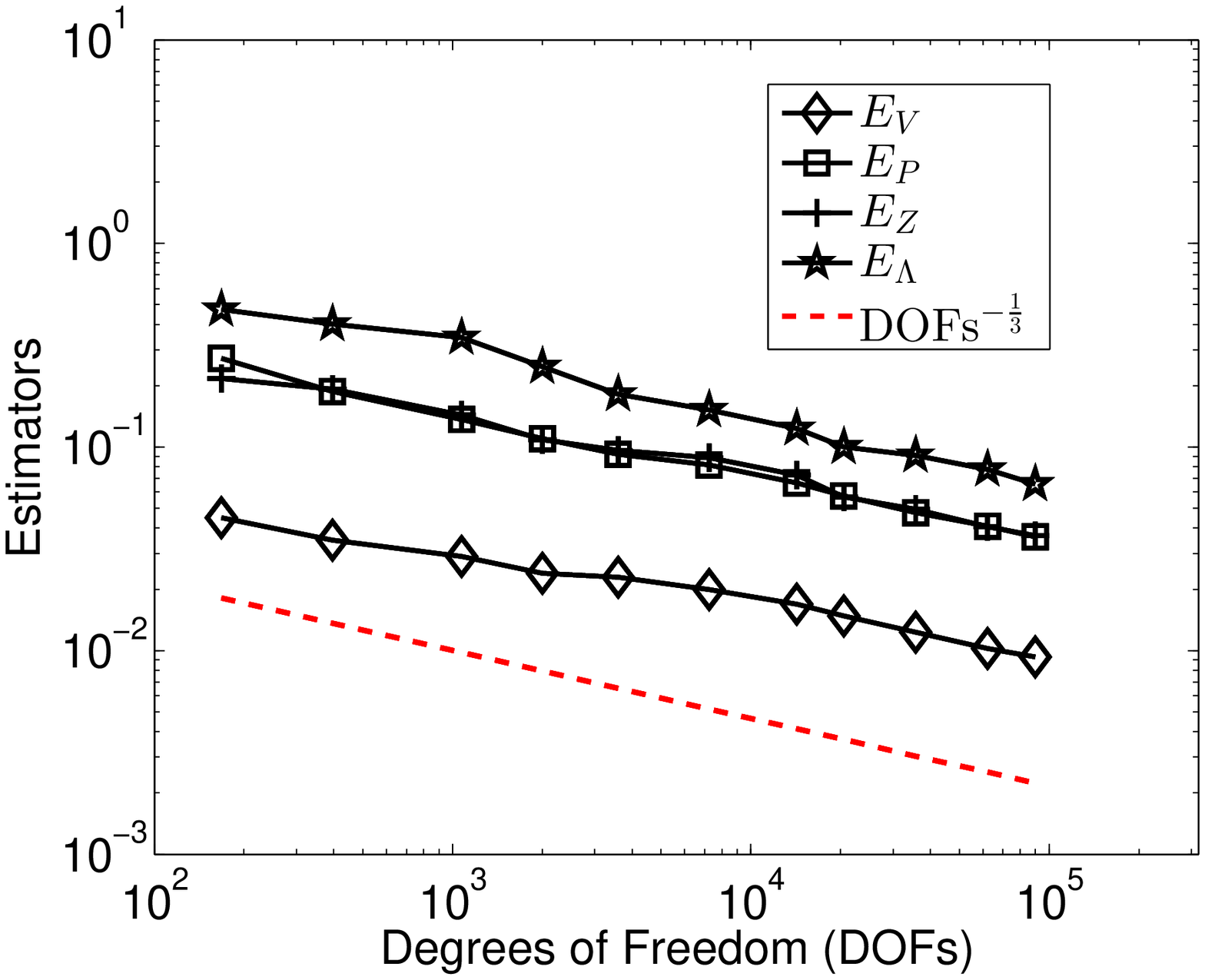}
  \end{center}
  \caption{Computational rates of convergence for the contributions $\mathcal{E}_{V}$, $\mathcal{E}_{P}$, $\mathcal{E}_Z$, and $\mathcal{E}_{\Lambda}$ of the computable and anisotropic a posteriori error estimator $\mathcal{E}_{ocp}$. We have considered $n=2$, an incompatible desired data $\usf_d$, and a non-convex domain $\Omega= (-1,1)^2 \setminus (0,1) \times (-1,0)$. The panels show the decrease of the contributions $\mathcal{E}_{V}$, $\mathcal{E}_{P}$, $\mathcal{E}_Z$, and $\mathcal{E}_{\Lambda}$ with respect to the number of degrees of freedom  for $s=0.3$ (left panel) and $s=0.8$ (right panel). In both cases we recover the optimal rate $(\# \T_{\Y})^{-1/3}$ for each contribution.}
\label{fig:separated_rates}
\end{figure}

%%%\begin{figure}[!h]
%%%\centering
%%%
%%%	\subfloat[$s = 0.2$ $(\sigma = 0.1$ and $\nu = 0.5$)]{\label{fig_s02}\includegraphics[width=1\textwidth]{fig_s02}}
%%%
%%%	\subfloat[$s = 0.3$ $(\sigma = 0.1$ and $\nu = 0.5$)]{\label{fig_s03}\includegraphics[width=1\textwidth]{fig_s03}}
%%%
%%%	\subfloat[$s=0.8$ $(\sigma = 0.05$ and $\nu = 0.1$)]{\label{fig_s08}\includegraphics[width=1\textwidth]{fig_s08}}
%%%
%%%	\subfloat[$s=0.9$ $(\sigma = 0.05$ and $\nu = 0.1$)]{\label{fig_s09}\includegraphics[width=1\textwidth]{fig_s09}}	
%%%
%%%	\caption{Adaptive meshes obtained by our anisotropic AFEM, after 11 iterations, with incompatible desired data $\usf_d$ and a non-convex domain $\Omega= (-1,1)^2 \setminus (0,1) \times (-1,0)$, and the corresponding finite element approximations of the optimal control variable $\ozsf$.}
%%%
%%%	\label{fig:meshes}
%%%
%%%\end{figure}

\EO{We now investigate the effect of varying the regularization parameter $\sigma$ by considering  $s=0.8$, $\asf = - 0.3$, $\bsf= 0.3$, $\theta = 0.5$, $\nu = 0.2$, and 
\[
\sigma \in \{ 10^0, 10^{-1}, 10^{-2}, 10^{-3}, 10^{-4} \}.
\]
In Figure \ref{fig:variando_sigma}, we observe that optimal experimental rates of convergence are obtained for the error estimator $\mathcal{E}_{ocp}$ for all the values of the parameter $\sigma$ considered.  In Figure \ref{fig_bang_bang} we present the adaptive mesh obtained by our AFEM after 9 iterations and the corresponding finite element approximations of the optimal control variable $\ozsf$ (a bang--bang controller).}

\begin{figure}[ht]
  \begin{center}
  \includegraphics[width=0.39\textwidth]{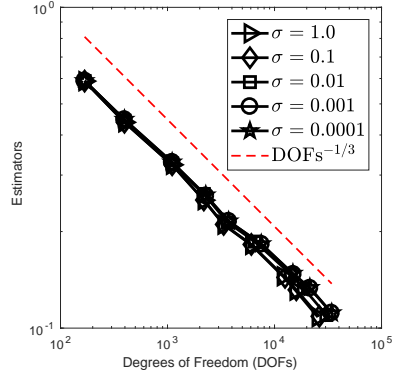}
  \end{center}
  \caption{\EO{Computational rates of convergence for our anisotropic AFEM with incompatible desired data $\usf_d$ and a non-convex domain $\Omega= (-1,1)^2 \setminus (0,1) \times (-1,0)$: $n=2$, $\nu = 0.2$, and $\sigma \in  \{ 10^0, 10^{-1}, 10^{-2}, 10^{-3}, 10^{-4} \}$. The panel shows the decrease of the total error indicator $\mathfrak{E}$ with respect to the number of degrees of freedom (DOFs). In all the presented cases the optimal rate $(\# \T_{\Y})^{-1/3}$ is achieved.}}
\label{fig:variando_sigma}
\end{figure}

\begin{figure}[!h]
 \begin{center}
\includegraphics[width=1\textwidth]{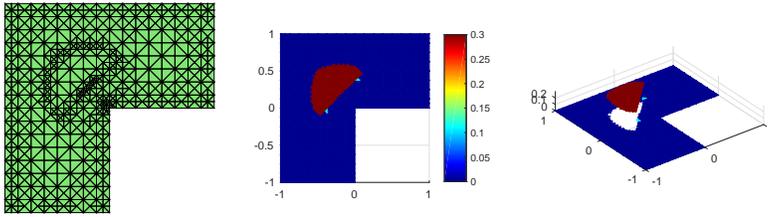}
 \end{center}
\caption{\EO{Adaptive mesh obtained by our anisotropic AFEM, after 11 iterations, with incompatible desired data $\usf_d$ and a non-convex domain $\Omega= (-1,1)^2 \setminus (0,1) \times (-1,0)$, and the corresponding finite element approximations of the optimal control variable $\ozsf$. We have considered the sparsity parameter $\nu = 0.2$ and the regularization parameter $\sigma = 10^{-4}$.}}
\label{fig_bang_bang}	
\end{figure}

\subsection{Conclusions}
Several conclusions can be drawn:

\begin{enumerate}
 \item [$\bullet$] \emph{Geometric singularities and fractional diffusion behavior}: Since $\usf_d = 1$, $\usf_d$ is an incompatible forcing term, when $s \in (0,\tfrac{1}{2}]$, for the adjoint equation $\Laps \bar \psf = \bar \usf - \usf_d$. Our results show that, when $s = 0.2$ and $s=0.3$, the devised AFEM localizes an important density of degrees of freedom near the boundary of the domain $\Omega$ \EO{and also near the reentrant corner.} This is to compensate the singular behavior \eqref{eq:u_singular_at_the_boundary}--\eqref{eq:u_singular_at_the_boundary_1/2} that is inherent to fractional diffusion with incompatible data. When $s = 0.8$ and $s = 0.9$, the incompatibility of the desired data \EO{is not longer active} and then the optimal adjoint state do not exhibits boundary layers. Our AFEM is thus focused on resolving the reentrant corner.
 \item [$\bullet$] \emph{Sparse optimal controls}: The fact that the cost functional $J$ involves the term $\| \zsf \|_{L^1(\Omega)}$ leads to sparsely supported optimal controls. Figure \ref{fig_bang_bang} is a particular instance of this feature. In addition, it can be observed that the error estimator $\mathcal{E}_{ocp}$ also focuses on refining the interface, \ie the boundary of the active set.
\end{enumerate}

\bibliographystyle{plain}
\bibliography{biblio}

\def\ocirc#1{\ifmmode\setbox0=\hbox{$#1$}\dimen0=\ht0 \advance\dimen0
  by1pt\rlap{\hbox to\wd0{\hss\raise\dimen0
  \hbox{\hskip.2em$\scriptscriptstyle\circ$}\hss}}#1\else {\accent"17 #1}\fi}
  \def\cprime{$'$} \def\cprime{$'$} \def\cprime{$'$} \def\cprime{$'$}
  \def\cprime{$'$} \def\cprime{$'$}
\begin{thebibliography}{10}

\bibitem{Abra}
M.~Abramowitz and I.A. Stegun.
\newblock {\em Handbook of mathematical functions with formulas, graphs, and
  mathematical tables}, volume~55 of {\em National Bureau of Standards Applied
  Mathematics Series}.
\newblock 1964.

\bibitem{AFO:17}
A.~Allendes, F.~Fuica, and E.~Ot\'arola.
\newblock Adaptive finite element methods for sparse {PDE}--constrained
  optimization.
\newblock {\em IMA J. Numer. Anal.}, 2019.
\newblock (accepted for publication) arXiv:1712.00448.

\bibitem{AOSR}
A.~Allendes, E.~Ot\'arola, R.~Rankin, and A.~J. Salgado.
\newblock Adaptive finite element methods for an optimal control problem
  involving {D}irac measures.
\newblock {\em Numer. Math.}, 137(1):159--197, 2017.

\bibitem{AO:IMA}
A.~Antil and E.~Ot\'arola.
\newblock An a \emph{posteriori} error analysis for an optimal control problem
  involving the fractional laplacian.
\newblock {\em IMA J. Numer. Anal.}, 38(1):198--226, 2018.

\bibitem{MR3429730}
H.~Antil and E.~Ot{\'a}rola.
\newblock A {FEM} for an optimal control problem of fractional powers of
  elliptic operators.
\newblock {\em SIAM J. Control Optim.}, 53(6):3432--3456, 2015.

\bibitem{MR3850351}
H.~Antil, E.~Ot\'{a}rola, and A.~J. Salgado.
\newblock Optimization with respect to order in a fractional diffusion model:
  analysis, approximation and algorithmic aspects.
\newblock {\em J. Sci. Comput.}, 77(1):204--224, 2018.

\bibitem{MR880421}
I.~Babu{\v{s}}ka and A.~Miller.
\newblock A feedback finite element method with a posteriori error estimation.
  {I}. {T}he finite element method and some basic properties of the a
  posteriori error estimator.
\newblock {\em Comput. Methods Appl. Mech. Engrg.}, 61(1):1--40, 1987.

\bibitem{BMNOSS:17}
L.~Banjai, J.M. Melenk, R.H. Nochetto, E.~Ot\'arola, A.J. Salgado, and Ch.
  Schwab.
\newblock Tensor {FEM} for spectral fractional diffusion.
\newblock {\em Found. Comput. Math.}, 2018.
\newblock (accepted for publication).

\bibitem{BS}
M.~{\v{S}}. Birman and M.~Z. Solomjak.
\newblock {\em Spektralnaya teoriya samosopryazhennykh operatorov v gilbertovom
  prostranstve}.
\newblock Leningrad. Univ., Leningrad, 1980.

\bibitem{BrennerScott}
S.~C. Brenner and L.~R. Scott.
\newblock {\em The mathematical theory of finite element methods}, volume~15 of
  {\em Texts in Applied Mathematics}.
\newblock Springer, New York, third edition, 2008.

\bibitem{bio}
A.~Bueno-Orovio, D.~Kay, V.~Grau, B.~Rodriguez, and K.~Burrage.
\newblock Fractional diffusion models of cardiac electrical propagation: role
  of structural heterogeneity in dispersion of repolarization.
\newblock {\em J. R. Soc. Interface}, 11(97), 2014.

\bibitem{CT:10}
X.~Cabr{\'e} and J.~Tan.
\newblock Positive solutions of nonlinear problems involving the square root of
  the {L}aplacian.
\newblock {\em Adv. Math.}, 224(5):2052--2093, 2010.

\bibitem{CS:07}
L.~Caffarelli and L.~Silvestre.
\newblock An extension problem related to the fractional {L}aplacian.
\newblock {\em Comm. Part. Diff. Eqs.}, 32(7-9):1245--1260, 2007.

\bibitem{CV:10}
L.~Caffarelli and A.~Vasseur.
\newblock Drift diffusion equations with fractional diffusion and the
  quasi-geostrophic equation.
\newblock {\em Ann. of Math.}, 171:1903--1930, 2010.

\bibitem{MR3489634}
L.~A. Caffarelli and P.~R. Stinga.
\newblock Fractional elliptic equations, {C}accioppoli estimates and
  regularity.
\newblock {\em Ann. Inst. H. Poincar\'e Anal. Non Lin\'eaire}, 33(3):767--807,
  2016.

\bibitem{CDDS:11}
A.~Capella, J.~D{\'a}vila, L.~Dupaigne, and Y.~Sire.
\newblock Regularity of radial extremal solutions for some non-local semilinear
  equations.
\newblock {\em Comm. Part. Diff. Eqs.}, 36(8):1353--1384, 2011.

\bibitem{CF:00}
C.~Carstensen and S.~A. Funken.
\newblock Fully reliable localized error control in the {FEM}.
\newblock {\em SIAM J. Sci. Comput.}, 21(4):1465--1484 (electronic), 1999.

\bibitem{CHW:12}
E.~Casas, R.~Herzog, and G.~Wachsmuth.
\newblock Optimality conditions and error analysis of semilinear elliptic
  control problems with {$L^1$} cost functional.
\newblock {\em SIAM J. Optim.}, 22(3):795--820, 2012.

\bibitem{MR2875241}
J.~M. Casc{\'o}n and R.~H. Nochetto.
\newblock Quasioptimal cardinality of {AFEM} driven by nonresidual estimators.
\newblock {\em IMA J. Numer. Anal.}, 32(1):1--29, 2012.

\bibitem{Chen.L2008c}
L.~Chen.
\newblock {$i$FEM}: An integrated finite element methods package in matlab.
\newblock Technical report, University of California at Irvine, 2009.

\bibitem{CNOS2}
L.~Chen, R.~H. Nochetto, E.~Ot{\'a}rola, and A.~J. Salgado.
\newblock A {PDE} approach to fractional diffusion: a posteriori error
  analysis.
\newblock {\em J. Comput. Phys.}, 293:339--358, 2015.

\bibitem{chen23}
W.~Chen.
\newblock A speculative study of $2/3$-order fractional laplacian modeling of
  turbulence: Some thoughts and conjectures.
\newblock {\em Chaos}, 16(2):1--11, 2006.

\bibitem{doi:10.1121/1.1646399}
W.~Chen and S.~Holm.
\newblock Fractional laplacian time-space models for linear and nonlinear lossy
  media exhibiting arbitrary frequency power-law dependency.
\newblock {\em The Journal of the Acoustical Society of America},
  115(4):1424--1430, 2004.

\bibitem{CiarletBook}
P.~G. Ciarlet.
\newblock {\em The finite element method for elliptic problems}, volume~40 of
  {\em Classics in Applied Mathematics}.
\newblock SIAM, Philadelphia, PA, 2002.

\bibitem{MR1058436}
F.~H. Clarke.
\newblock {\em Optimization and nonsmooth analysis}, volume~5 of {\em Classics
  in Applied Mathematics}.
\newblock Society for Industrial and Applied Mathematics (SIAM), Philadelphia,
  PA, second edition, 1990.

\bibitem{MR1204855}
M.~Costabel and M.~Dauge.
\newblock General edge asymptotics of solutions of second-order elliptic
  boundary value problems. {I}, {II}.
\newblock {\em Proc. Roy. Soc. Edinburgh Sect. A}, 123(1):109--155, 157--184,
  1993.

\bibitem{DGO}
M.~D'Elia, C.~Glusa, and E.~Ot\'arola.
\newblock A priori error estimates for the optimal control of the integral
  fractional laplacian.
\newblock {\em SIAM J. Control Optim.}, 2019.
\newblock (accepted for publication) arXiv:1810.04262.

\bibitem{MR3023366}
Q.~Du, M.~Gunzburger, R.B. Lehoucq, and K.~Zhou.
\newblock Analysis and approximation of nonlocal diffusion problems with volume
  constraints.
\newblock {\em SIAM Rev.}, 54(4):667--696, 2012.

\bibitem{Javier}
J.~Duoandikoetxea.
\newblock {\em Fourier analysis}, volume~29 of {\em Graduate Studies in
  Mathematics}.
\newblock American Mathematical Society, Providence, RI, 2001.

\bibitem{DL:05}
R.~G. Dur{\'a}n and A.~L. Lombardi.
\newblock Error estimates on anisotropic {$Q_1$} elements for functions in
  weighted {S}obolev spaces.
\newblock {\em Math. Comp.}, 74(252):1679--1706 (electronic), 2005.

\bibitem{Guermond-Ern}
A.~Ern and J.-L. Guermond.
\newblock {\em Theory and practice of finite elements}, volume 159 of {\em
  Applied Mathematical Sciences}.
\newblock Springer-Verlag, New York, 2004.

\bibitem{FKS:82}
E.~B. Fabes, C.E. Kenig, and R.P. Serapioni.
\newblock The local regularity of solutions of degenerate elliptic equations.
\newblock {\em Comm. Part. Diff. Eqs.}, 7(1):77--116, 1982.

\bibitem{GH:14}
P.~Gatto and J.S. Hesthaven.
\newblock Numerical approximation of the fractional laplacian via hp-finite
  elements, with an application to image denoising.
\newblock {\em J. Sci. Comp.}, 65(1):249--270, 2015.

\bibitem{GO}
C.~Glusa and E.~Ot\'arola.
\newblock Optimal control of a parabolic fractional {PDE}: analysis and
  discretization.
\newblock arXiv:1905.10002, 2019.

\bibitem{GU}
V.~Gol{\cprime}dshtein and A.~Ukhlov.
\newblock Weighted {S}obolev spaces and embedding theorems.
\newblock {\em Trans. Amer. Math. Soc.}, 361(7):3829--3850, 2009.

\bibitem{Grisvard}
P.~Grisvard.
\newblock {\em Elliptic problems in nonsmooth domains}, volume~24 of {\em
  Monographs and Studies in Mathematics}.
\newblock Pitman (Advanced Publishing Program), Boston, MA, 1985.

\bibitem{HHIK}
M.~Hinterm\"uller, R.~H.~W. Hoppe, Y.~Iliash, and M.~Kieweg.
\newblock An a posteriori error analysis of adaptive finite element methods for
  distributed elliptic control problems with control constraints.
\newblock {\em ESAIM Control Optim. Calc. Var.}, 14(3):540--560, 2008.

\bibitem{ICH}
R.~Ishizuka, S.-H. Chong, and F.~Hirata.
\newblock An integral equation theory for inhomogeneous molecular fluids: The
  reference interaction site model approach.
\newblock {\em J. Chem. Phys}, 128(3), 2008.

\bibitem{KRS}
K.~Kohls, A.~R\"osch, and K.~G. Siebert.
\newblock A posteriori error analysis of optimal control problems with control
  constraints.
\newblock {\em SIAM J. Control Optim.}, 52(3):1832--1861, 2014.

\bibitem{Landkof}
N.~S. Landkof.
\newblock {\em Foundations of modern potential theory}.
\newblock Springer-Verlag, New York, 1972.
\newblock Translated from the Russian by A. P. Doohovskoy, Die Grundlehren der
  mathematischen Wissenschaften, Band 180.

\bibitem{MR2064019}
S.~Z. Levendorski{\u\i}.
\newblock Pricing of the {A}merican put under {L}\'evy processes.
\newblock {\em Int. J. Theor. Appl. Finance}, 7(3):303--335, 2004.

\bibitem{MR1887737}
W.~Liu and N.~Yan.
\newblock A posteriori error estimates for distributed convex optimal control
  problems.
\newblock {\em Adv. Comput. Math.}, 15(1-4):285--309 (2002), 2001.
\newblock A posteriori error estimation and adaptive computational methods.

\bibitem{MR1770058}
P.~Morin, R.~H. Nochetto, and K.~G. Siebert.
\newblock Data oscillation and convergence of adaptive {FEM}.
\newblock {\em SIAM J. Numer. Anal.}, 38(2):466--488 (electronic), 2000.

\bibitem{MNS02}
P.~Morin, R.H. Nochetto, and K.G. Siebert.
\newblock Local problems on stars: a posteriori error estimators, convergence,
  and performance.
\newblock {\em Math. Comp.}, 72(243):1067--1097 (electronic), 2003.

\bibitem{Muckenhoupt}
B.~Muckenhoupt.
\newblock Weighted norm inequalities for the {H}ardy maximal function.
\newblock {\em Trans. Amer. Math. Soc.}, 165:207--226, 1972.

\bibitem{MR3246044}
R.~Musina and A.~I. Nazarov.
\newblock On fractional {L}aplacians.
\newblock {\em Comm. Partial Differential Equations}, 39(9):1780--1790, 2014.

\bibitem{NOS}
R.~H. Nochetto, E.~Ot\'arola, and A.~J. Salgado.
\newblock A {PDE} approach to fractional diffusion in general domains: a priori
  error analysis.
\newblock {\em Found. Comput. Math.}, 15(3):733--791, 2015.

\bibitem{NOS2}
R.~H. Nochetto, E.~Ot{\'a}rola, and A.~J. Salgado.
\newblock Piecewise polynomial interpolation in {M}uckenhoupt weighted
  {S}obolev spaces and applications.
\newblock {\em Numer. Math.}, 132(1):85--130, 2016.

\bibitem{NSV:09}
R.~H. Nochetto, K.~G. Siebert, and A.~Veeser.
\newblock Theory of adaptive finite element methods: an introduction.
\newblock In {\em Multiscale, nonlinear and adaptive approximation}, pages
  409--542. Springer, Berlin, 2009.

\bibitem{MR3259962}
E.~Ot\'arola.
\newblock {\em A {PDE} approach to numerical fractional diffusion}.
\newblock ProQuest LLC, Ann Arbor, MI, 2014.
\newblock Thesis (Ph.D.)--University of Maryland, College Park.

\bibitem{MR3702421}
E.~Ot\'{a}rola.
\newblock A piecewise linear {FEM} for an optimal control problem of fractional
  operators: error analysis on curved domains.
\newblock {\em ESAIM Math. Model. Numer. Anal.}, 51(4):1473--1500, 2017.

\bibitem{OS:Sparse}
E.~Ot\'{a}rola and A.~J. Salgado.
\newblock Sparse optimal control for fractional diffusion.
\newblock {\em Comput. Methods Appl. Math.}, 18(1):95--110, 2018.

\bibitem{PH:97}
H.~Pham.
\newblock Optimal stopping, free boundary, and {A}merican option in a
  jump-diffusion model.
\newblock {\em Appl. Math. Optim.}, 35(2):145--164, 1997.

\bibitem{MR2330778}
W.~Schirotzek.
\newblock {\em Nonsmooth analysis}.
\newblock Universitext. Springer, Berlin, 2007.

\bibitem{MR3485971}
R.~Schneider and G.~Wachsmuth.
\newblock A {P}osteriori {E}rror {E}stimation for {C}ontrol-{C}onstrained,
  {L}inear-{Q}uadratic {O}ptimal {C}ontrol {P}roblems.
\newblock {\em SIAM J. Numer. Anal.}, 54(2):1169--1192, 2016.

\bibitem{MR3618711}
S.A. Silling.
\newblock Why peridynamics?
\newblock In {\em Handbook of peridynamic modeling}, Adv. Appl. Math. CRC
  Press, 2017.

\bibitem{Silvestre:2007}
L.~Silvestre.
\newblock Regularity of the obstacle problem for a fractional power of the
  {L}aplace operator.
\newblock {\em Comm. Pure Appl. Math.}, 60(1):67--112, 2007.

\bibitem{MR2556849}
G.~Stadler.
\newblock Elliptic optimal control problems with {$L^1$}-control cost and
  applications for the placement of control devices.
\newblock {\em Comput. Optim. Appl.}, 44(2):159--181, 2009.

\bibitem{Stein}
E.~M. Stein.
\newblock {\em Singular integrals and differentiability properties of
  functions}.
\newblock Princeton Mathematical Series, No. 30. Princeton University Press,
  Princeton, N.J., 1970.

\bibitem{Tbook}
F.~Tr\"oltzsch.
\newblock {\em Optimal control of partial differential equations}, volume 112
  of {\em Graduate Studies in Mathematics}.
\newblock American Mathematical Society, Providence, RI, 2010.
\newblock Theory, methods and applications, Translated from the 2005 German
  original by J\"urgen Sprekels.

\bibitem{Turesson}
B.~O. Turesson.
\newblock {\em Nonlinear potential theory and weighted {S}obolev spaces},
  volume 1736 of {\em Lecture Notes in Mathematics}.
\newblock Springer-Verlag, Berlin, 2000.

\bibitem{Verfurth}
R.~Verf\"{u}rth.
\newblock {\em A Review of A Posteriori Error Estimation and Adaptive
  Mesh-Refinement Techniques}.
\newblock John Wiley, 1996.

\bibitem{MR2826983}
G.~Wachsmuth and D.~Wachsmuth.
\newblock Convergence and regularization results for optimal control problems
  with sparsity functional.
\newblock {\em ESAIM Control Optim. Calc. Var.}, 17(3):858--886, 2011.

\end{thebibliography}

\end{document}